\documentclass[reqno,english,12pt]{amsart}

\usepackage[margin=1in]{geometry}

\synctex=-1
\usepackage{amstext,amsthm,amsfonts,amssymb,euscript,mathrsfs,graphics,color,xcolor,amsmath,amssymb,latexsym}
\usepackage{amsthm}
\newtheorem{definition}{Definition}
\usepackage{hyperref}

\makeatletter

\newcommand{\lyxmathsym}[1]{\ifmmode\begingroup\def\b@ld{bold}
	\text{\ifx\math@version\b@ld\bfseries\fi#1}\endgroup\else#1\fi}

\numberwithin{equation}{section}
\numberwithin{figure}{section}

\theoremstyle{plain}

\newtheorem{thm}{\protect\theoremname}[section]
\theoremstyle{plain}
\newtheorem{cor}[thm]{\protect\corollaryname}
\newtheorem{rem}[thm]{\protect\remarkname}
\theoremstyle{plain}
\newtheorem{lem}[thm]{\protect\lemmaname}
\theoremstyle{plain}
\newtheorem{prop}[thm]{\protect\propositionname}

\makeatother

\def\tofill{\vskip30pt $\cdots$ To fill in $\cdots$ \vskip30pt}

\newcommand{\comment}[1]{\vskip.3cm
	\fbox{%
		\parbox{0.93\linewidth}{\footnotesize #1}}
	\vskip.3cm}

\def\wtF{\widetilde{{\mathcal{F}}}}
\def\whF{\widehat{{\mathcal{F}}}}
\def\jx{\langle x \rangle}
\def\jy{\langle y \rangle}
\def\jk{\langle k \rangle}

\def\wt{\widetilde}

\def\R{\mathbb{R}}
\def\C{\mathbb{C}}

\def\K{\mathcal{K}}
\def\e{\epsilon}

\usepackage{babel}
\providecommand{\corollaryname}{Corollary}
\providecommand{\lemmaname}{Lemma}
\providecommand{\propositionname}{Proposition}
\providecommand{\remarkname}{Remark}
\providecommand{\theoremname}{Theorem}

\begin{document}
	
	\title[Cubic NLS with a potential]{The $1$d nonlinear Schr\"odinger equation with \\ a weighted $L^1$ potential}

	\author{Gong Chen and Fabio Pusateri}
	
	\date{\today}
	
	\address{University of Toronto, Department of Mathematics, 40 St George Street,
		Toronto, ON, M5S 2E4, Canada.}
	
	\email{gc@math.toronto.edu}
	
	\email{fabiop@math.toronto.edu}

	\begin{abstract}
		We consider the $1d$ cubic nonlinear Schr\"odinger equation with a large external potential $V$ with no bound states.
		We prove global regularity and quantitative bounds for small solutions under mild assumptions on $V$.
		In particular, we do not require any differentiability of $V$, 
		and make spatial decay assumptions that are weaker than those 
		found in the literature (see for example \cite{Del,N,GPR}).
		We treat both the case of generic and non-generic potentials,
		with some additional symmetry assumptions in the latter case.
		
		Our approach is based on the combination of three main ingredients: 
		the Fourier transform adapted to the Schr\"odinger operator,
		basic bounds on pseudo-differential operators that exploit the structure of the Jost function, 
		and improved local decay and smoothing-type estimates.
		An interesting aspect of the proof is an ``approximate commutation'' identity
		for a suitable notion of a vectorfield, which allows us to simplify the previous approaches 
		and extend the known results to a larger class of potentials.
		%
		%
		%
		%
		Finally, under our weak assumptions we can include the interesting physical case of a barrier potential
		as well as 
		recover the result of \cite{MMS} for a delta potential.
		%
	\end{abstract}
	
	\maketitle
	
	\setcounter{tocdepth}{1}
	
	\tableofcontents

	\section{Introduction}
	In this paper we study the global-in-time behavior 
	of small solutions to the one-dimensional cubic nonlinear Schr\"odinger equation with  a large external potential
	\begin{equation}\label{eq:NLSE}
		i\partial_{t}u-\partial_{xx}u + V(x)u \pm \left|u\right|^{2}u = 0,\qquad u(t=0,x)=u_{0}(x),
	\end{equation}
	for an unknown $u: \R_t\times \R_x \longrightarrow \C$, with initial data
	$u_{0}$ in a weighted $L^2$ (or Sobolev) space. 
	The aims of this paper are twofold.
	First, we enlarge the class of potentials $V$ for which one can obtain
	global-in-time bounds and long-time asymptotics for \eqref{eq:NLSE};
	second, we provide an alternative approach, compared to recent works on this and similar problems
	(see for example \cite{N,Del,GPR,MMS}), 
	that we believe could be helpful to treat open problems on 
	the stability of solitons for nonlinear Schr\"odinger and related models.
	
	Here are the assumptions on the class of potentials that we are going treat:
	
	\medskip
	\subsubsection*{Assumptions on $V$}
	It is well-known that for $V\in L^1$ the spectrum of the Schr\"odinger operator $H=-\partial_{xx}+V$
	consists of absolutely continuous spectrum on $(0,\infty)$ plus a finite number of eigenvalues
	(see for example Reed-Simon \cite{Reed-Simon}).
	
	In this paper we assume that 
	\begin{equation}\label{noboundstates}
		H \quad \mbox{has no eigenvalues.}
	\end{equation}
	This is the case if, for example, $V\geq 0$. 
	One important aspect of our results is that we are able to treat both {\it generic} and {\it non-generic}
	classes of potentials. See Definition \ref{def:generic}.
	We make the following mild assumptions on the decay of $V$ at infinity:
	
	\setlength{\leftmargini}{1.5em}
	\begin{itemize}
		
		\item If $V$ is generic we assume that
		\begin{equation}\label{Vgeneric}
			\jx^\gamma V\in L^1 \,\,\,  \text{with} \,\,\, \gamma>5/2.
		\end{equation} 
		
		\item If $V$ is non-generic we assume that
		\begin{equation}\label{Vnongeneric}
			\jx^\gamma V\in L^1 \,\,\,  \text{with} \,\,\, \gamma>7/2
		\end{equation}
		and, in addition, that 
		\begin{equation}\label{nongenericresonance}
			\widetilde{\mathcal{F}}[e^{it(-\partial_{xx}+V)}u](0)=0, \qquad 
			\widetilde{\mathcal{F}}[e^{it(-\partial_{xx}+V)}|u|^2u](0)=0,
		\end{equation}
		where $\widetilde{{\mathcal{F}}}$ is the distorted Fourier transform associated to $H$. 
		See Subsection \ref{ssecDFT} for its definition. 
		
		We remark that the condition \eqref{nongenericresonance} can be easily satisfied by imposing, 
		for example, that the potential and the zero energy resonance 
		are even and that the initial datum $u_0$ is odd (so that the solution $u$ is odd for all times). 
		See Lemma \ref{rem0} and Remark \ref{lemtildeF}.
	\end{itemize}
	
	Notice that in either cases we do not require any differentiability for $V$.
	
	%

	\medskip
	\subsection{Background and previous results}
	First, recall that sufficiently regular solutions of \eqref{eq:NLSE} conserve the $L^{2}$ norm
	\[M(u) := \int\left|u\right|^{2}\,dx \]
	and the total energy (Hamiltonian):
	\[
	H\left(u\right) := \frac{1}{2}\left|\partial_xu\right|^{2}+\frac{1}{2}V\left|u\right|^{2}\pm\frac{1}{4}\left|u\right|^{4}.
	\]
	The Cauchy problem for \eqref{eq:NLSE} with $V=0$ - we will refer to this as the ``free'' or ``flat'' case -
	is globally well-posed in $L^{2}$, see for example
	Cazenave-Weissler \cite{CW} and Cazenave-Naumkin \cite{CN}. 
	Our main interest is the global-in-time bounds and asymptotic behavior as $|t|\rightarrow \infty$. 
	The main feature of the cubic nonlinearity is its criticality with respect to scattering:
	linear solutions of the Schr\"odinger equation decay at best like $|t|^{-1/2}$ in $L^\infty_x$, 
	so that, when evaluating the nonlinearity on linear solutions, one see that $|u|^{2}u \sim |t|^{-1}u$;
	the non-integrability of $|t|^{-1}$ 
	results in a ``Coulomb''-type contribution of the nonlinear terms.
	See \cite{SS93} and 
	references therein for works on the classical question of asymptotic completeness 
	for the linear many-body problem with a Coulomb potential.
	
	
	In the case $V=0$ the problem is well understood. 
	Solutions of \eqref{eq:NLSE} with $V=0$ and initial data $u|_{t=0}\in H^{1}\cap L^{2}(x^{2}dx)$
	(i.e. bounded energy and variance) are known to exhibit modified scattering as time goes to infinity:
	they decay at the same rate of linear solutions 
	but their asymptotic behavior differs from linear solutions by a logarithmic phase correction. 
	Using complete integrability this was proven in the seminal work of Deift-Zhou \cite{DZ};
	see also \cite{DZ2} on nonlinear perturbations of the defocusing cubic NLS.
	Without making use of complete integrability, and
	restricting the analysis to small solutions, 
	proofs of modified scattering were given by Hayashi-Naumkin \cite{HN}, Lindblad-Soffer \cite{LS},
	Kato-Pusateri \cite{KP} and Ifrim-Tataru \cite{IT}.
	
	Recently, the above results for small solutions have been extended to the full problem with potential \eqref{eq:NLSE}
	in the works of Naumkin \cite{N}, Delort \cite{Del} and Germain-Pusateri-Rousset \cite{GPR}.
	These works treat 
	potentials of sufficient regularity and decay, and establish modified scattering results similar to 
	the flat case.
	More precisely, \cite{N} and \cite{GPR} both assume that the potential is generic;
	\cite{N} assumes that $V \in W^{1,1}$ and $\jx^\gamma V\in L^1$ with $\gamma>5/2$; 
	similar but slightly stronger assumptions are made in \cite{GPR}.
	The work \cite{Del} also considers non-generic cases\footnote{More precisely 
		the case of the so-called ``very exceptional'' potentials, that is,
		\[\int V(x)m_+(x,0)\,dx=0 \qquad \text{and} \qquad \int V(x)xm_+(x,0)\,dx=0;\]
		see \eqref{mpm} for the definition of $m_+$.
	}
	under symmetry assumptions (even potential, odd data) akin to \eqref{nongenericresonance}.
	The recent work of Masaki-Murphy-Segata \cite{MMS} treats the special case of a delta potential; 
	compared to the works cited above, in \cite{MMS} the potential has no regularity
	but, on the other hand, it has the advantage of being explicitly calculable\footnote{In the sense that
		one has explicit formulas for the generalized eigenfunctions associated to the Schr\"odinger operator 
		$-\partial_{xx} + \delta$, and for the scattering matrix (transmission and reflection coefficients).}.

	
	\medskip
	\subsection{Ideas of the proof}
	Our proof is based on three main ingredients:
	the Fourier transform adapted to the Schr\"odinger operator $H=-\partial_{xx}+V$,
	local decay and smoothing estimates, and $L^2$-bounds on pseudo-differential operators
	whose symbols are given by the Jost functions for $H$.
	
	Our first step  is to use the Fourier Transform adapted to $H$ -
	the so-called ``Distorted Fourier Transform'' - to rewrite \eqref{eq:NLSE} in (distorted) Fourier space.
	For the sake of this brief introduction, it suffices
	to admit for the moment the existence of ``generalized plane waves''   $\mathcal{K}(x,k)$ such that
	one can define an $L^2$ unitary transformation $\wtF$ by
	\begin{align}\label{tildeFint}
		\wtF[f](k) := \widetilde{f}(k) := \int \overline{\mathcal{K}(x,k)}f(x)\,dx,
		\quad \mbox{with} \quad \wtF^{-1}\left[\phi\right](x)=\int\mathcal{K}(x,k)\phi(k)\,dk.
	\end{align}
	See \eqref{matK}, \eqref{psipm} and \eqref{defT},
	for the precise definition of $\mathcal{K}(x,k)$ and its relation
	with the generalized eigenfunctions of $H$.
	The distorted transform $\wtF$ diagonalizes the Schr\"odinger operator:
	$-\partial_{xx}+V=\wtF^{-1}k^{2}\wtF$.

	Given a solution $u$ of \eqref{eq:NLSE} we let $f = e^{-itH} u$ be its linear profile, so that 
	$\widetilde{f}(t,k) = e^{-itk^2}\wt{u}(t,k)$.
	One can prove the basic linear estimate 
	\begin{align}\label{intlinest}
		{\| u(t,\cdot) \|}_{L^\infty_x} \lesssim \frac{1}{|t|^{1/2}} {\| \wt{f}(t) \|}_{L^\infty_k}
		+ \frac{1}{|t|^{3/4}} {\| \partial_k \wt{f}(t) \|}_{L^2_k}
	\end{align}
	which is the analogue of the standard linear estimate for the case $V=0$
	(where one can replace ${\| \partial_k \wt{f}(t) \|}_{L^2}$ by 
	a standard weighted norm ${\| xf(t) \|}_{L^2} = {\| Ju(t) \|}_{L^2}$, with $J = x+2it\partial_x$).
	To obtain the sharp pointwise decay of $|t|^{-1/2}$ it then suffices to control 
	$\wt{f}$ uniformly in $k$ and $t$ and the $L^2$-norm of $\partial_k \wt{f}(t)$
	with a small growth in $t$. 
	Both of these bounds are achieved by studying the equation in the distorted Fourier space as we briefly described below.
	
	In the distorted Fourier space the Duhamel's formula associated to \eqref{eq:NLSE} becomes
	\begin{align}\label{introD}
		\begin{split}
			\tilde{f}(t,k) & = \tilde{f}(0,k) \pm i \, \mathcal{N}_\mu[f,f,f](t,k) 
			\\ 
			\mathcal{N}_\mu[f,f,f](t,k) & := \int_{0}^{t} \iiint e^{is(-k^2+\ell^2-m^2+n^2)}
			\tilde{f}(s,\ell)\overline{\tilde{f}(s,m)}\tilde{f}(s,n)\mu(k,\ell,m,n) \,dndmd\ell ds
		\end{split}
	\end{align}
	where we have defined
	the {\it nonlinear spectral distribution}
	\begin{align}\label{intromu}
		\mu(k,\ell,m,n):=\int\overline{\mathcal{K}(x,k)}\mathcal{K}(x,\ell)\overline{\mathcal{K}(x,m)}\mathcal{K}(x,n)\,dx.
	\end{align}
	To obtain the desired bounds on $\wt{f}$ we need to understand the structure of $\mu$.
	To do this we first decompose 
	$\K = \K_S + \K_R$ where:
	$\K_S$ is linear combination of exponentials $e^{\pm i xk}$ whose coefficients depend on the sign of $k$ and $x$,\footnote{The
		distinction between $x$ positive or negative is done by inserting smooth cutoffs,
		but let us be more informal at this stage.}
	and therefore resembles a (flat) plane wave;
	$\K_R$ is the component arising from the interaction with the potential
	and has strong localization in $x$ and is uniformly regular in $k$.
	See \eqref{eq:decomK} and \eqref{Ksing}-\eqref{KR} for the precise formulas.
	
	According to this basic decomposition, 
	we propose a splitting of $\mu$ into two pieces: $\mu = \mu_S + \mu_R$,
	where $\mu_S$ only contains the interaction of the four $\K_S$ functions having argument $x$ of the same sign,
	and $\mu_R$ is all the rest. 
	We call $\mu_S$ the ``singular' part of $\mu$
	and $\mu_R$, the``regular'' part of $\mu$.
	We then define $\mathcal{N}_S = \mathcal{N}_{\mu_S}$, respectively $\mathcal{N}_R := \mathcal{N}_{\mu_R}$, 
	to be the singular, respectively, the regular, part of the nonlinear terms $\mathcal{N}_\mu$ in \eqref{introD}.
	
	We treat the two components $\mathcal{N}_S$ and $\mathcal{N}_R$ separately by relying on two main observations:
	a {\it commutation} property with $\partial_k $ for the singular part, 
	and the {\it localization} property of the regular part.
	More precisely, with a simple explicit calculation, we show that the multilinear commutator
	between $\partial_k$ and $\mathcal{N}_S$ satisfies
	\begin{align}\label{introcomm}
		[\partial_k,\mathcal{N}_S] = \mathcal{N}_S^{'}
	\end{align}
	where $\mathcal{N}_S^{'}$ 
	is a localized term of the form $a(x) |u|^2 u$, for a Schwartz function $a$. 
	This last term is then very easy to handle using the localized decay estimates
	that we present below, and that are used to estimate $\mathcal{N}_R$ as well.
	
	The regular part $\mathcal{N}_R$ can be thought of as 
	the (flat) transform of a nonlinear term of the form  $\jx^{-\rho}|u|^2u$, 
	for some $\rho >0$ related to the decay of $V$. 
	More precisely, we can view it as (the transform of) a localized trilinear term
	whose inputs are pseudo-differential operators applied to the solution $u$ that satisfy
	$L^2$ and $L^\infty$ type estimates similar to those satisfied by $u$ itself.
	
	It is not hard to see that applying $\partial_k$ to $\mathcal{N}_R$ essentially amounts to multiplying it by 
	a factor of $tk$.
	Then, we are reduced to estimating the $L^2_x$ norm of an expression of the form 
	\begin{align*}
		\int_0^t s \, \langle \partial_x \rangle \jx^{-\rho} |u(s)|^2 u(s)\, ds.
	\end{align*}
	This naturally leads us to investigate {\it local decay and smoothing-type estimates} for 
	$u=e^{itH}f$.
	Importantly, we need to do this under the sole assumptions that we can control (up to some small growth in time)
	$\partial_k \wt{f}$ in $L^2_k$.
	
	Examples of the estimates that we prove are the localized improved decay
	\begin{align}\label{introloc1}
		{\Big\| \jx^{-1} e^{iHt} f \Big\|}_{L^\infty_x}
		\lesssim |t|^{-\frac{3}{4}}{\big\| \tilde{f} \big\|}_{H^1_k}, 
	\end{align}
	and the localized $L^2$ smoothing/improved decay estimate
	\begin{align}\label{introloc2}
		{\Big\| \jx^{-1} \partial_x e^{iHt}f \Big\|}_{L^2}
		\lesssim |t|^{-\frac{3}{4}} {\big\| \tilde{f} \big\|}_{H^1_k}. 
	\end{align}
	These hold for all potentials 
	(also non-generic ones) provided $\wt{f}(0)=0$;
	see Lemmas \ref{lemlocdecinftyng} and \ref{lem:locl2ng} for the precise statements.
	For generic potentials \eqref{introloc2} 
	can be improved by replacing $|t|^{-3/4}$ by $|t|^{-1}$, and
	this allows us to put weaker assumption on $V$ in the generic case.
	\eqref{introloc1} can be similarly improved for generic $V$. 
	Although we do not use this estimate, we still provide its proof in Appendix \ref{sec:localdecay}, 
	since we believe this is of independent interest,
	and will be useful when dealing with perturbations of solitons and solitary waves.\footnote{Also note
		that \eqref{introloc1} with a power of $|t|^{-1}$
		is at the same ``scaling'' of the linear estimate \eqref{KS}
		which appears in Krieger and Schlag \cite{KS}.}

	Finally, let us discuss another important aspect our proofs that is related to the assumptions on the potential $V$,
	and the fact that we are able to not impose any bound on any of its derivatives.
	In the proof of the linear estimates mentioned above,
	and of the nonlinear estimates for the regular part of the distribution $\mu_R$,
	one naturally faces the need to bound {\it pseudo-differential operators} (PDOs)
	with symbols given by the functions $\K=\K(x,k)$, or their components (such as $\K_R$),
	as well as their first order derivatives in $x$, and up to second order derivatives in $k$.
	Typically, derivatives in $k$ can be controlled by the decay of $V$,
	while derivatives in $x$ require regularity.
	In particular, one should notice that the symbols under consideration are bounded in $x$ and $k$,
	but, for potentials which are only in a weighted $L^1$ space,
	they are not (uniformly) regular in $x$: their second derivatives in $x$ grow with $k$. 
	Therefore, 
	these symbols do not belong to standard symbol classes
	for which one has boundedness of the associated operator on $L^2$ via known results.
	We then make the simple observation that these symbols solve some basic integral equations,
	and, therefore, are quite explicit.
	We then exploit the precise structure of these equations 
	and obtain the necessary PDO bounds under our weak hypotheses.

	\medskip
	\subsection{Results}
	Here is our main result on the global existence and sharp pointwise decay
	for solutions of \eqref{eq:NLSE}.

	\begin{thm}\label{thm:main1}
		Consider the nonlinear Schr\"odinger equation with a potential
		\begin{equation}\label{eq:NLSE-1}
			i\partial_{t}u-\partial_{xx}u\pm\left|u\right|^{2}u+V(x)u=0,\ u\left(0\right)=u_{0},
		\end{equation}
		where $V$ and $u$ satisfy the assumptions \eqref{noboundstates}, 
		\eqref{Vgeneric} in the generic case and \eqref{noboundstates}, 
		\eqref{Vnongeneric}, \eqref{nongenericresonance} in the non-generic case. 
		Then we have the following:
		
		
		There exists $0<\epsilon_{0}\ll 1$ 
		such that for all $\eta\leq\epsilon_{0}$ and 
		\begin{align}\label{datasmall}
			\left\Vert u_{0}\right\Vert _{H^{1,1}}=
			\left\Vert u_{0}\right\Vert _{H^{1}}+\left\Vert xu_{0}\right\Vert _{L^{2}}=\eta
		\end{align}
		the equation \eqref{eq:NLSE-1} has a unique
		global solution $u\in C(\R,H^1(\R))$, with $u(0,x)=u_{0}(x)$, and satisfying the sharp decay rate
		\begin{align}\label{main1fdecay}
			\sup_{t\in\mathbb{R}}\left\Vert u(t)\right\Vert_{L^\infty_x}
			\lesssim \frac{\eta}{\left(1+\left|t\right|\right)^{\frac{1}{2}}}. 
		\end{align}
		
		Moreover, if we define the profile of the solution $u$ as
		\begin{align}
			\label{main1prof}
			f\left(t,x\right):=e^{-it\left(-\partial_{xx}+V\right)}u\left(t,x\right),
			\qquad \tilde{f}\left(t,k\right):=e^{-itk^{2}}\tilde{u}\left(t,k\right),
		\end{align}
		where $\tilde{g}=\widetilde{\mathcal{F}}g$ denotes the distorted Fourier transform, 
		then 
		\begin{align}\label{main1fbounds}
			{\big\| \tilde{f}(t) \big\|}_{L_k^\infty}
			+ (1+|t|)^{-\alpha} {\| \partial_{k}\tilde{f}(t) \|}_{L_k^2} \lesssim\eta
		\end{align}
		for some $\alpha=\alpha(\gamma)>0$ small enough.
		
		Finally, we have the following asymptotics:
		there exists $W_{+\infty}\in L^{\infty}$ such that
		\begin{align}\label{mainasy}
			\left|\tilde{f}\left(t,k\right)\exp\left(\frac{i}{2}\int_{0}^{t}\left|\tilde{f}\left(s,k\right)\right|^{2}
			\frac{ds}{s+1}\right)-W_{+\infty}(k)\right| \lesssim \eta \, t^{-\beta}
		\end{align}
		for some $\beta\in(0,\alpha)$ as $t\rightarrow\infty$.
	\end{thm}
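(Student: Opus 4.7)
The plan is to close a bootstrap on the norm
\[
\|u\|_X := \sup_{t\in [0,T]} \Big( {\| \tilde f(t) \|}_{L^\infty_k} + (1+|t|)^{-\alpha} {\| \partial_k\tilde f(t) \|}_{L^2_k} \Big),
\]
assuming ${\|u\|}_X \leq 2\eta$ on $[0,T]$ and improving to ${\|u\|}_X \leq C\eta$. Local well-posedness in $H^1$ is standard, and once the bootstrap closes, global existence in $H^1$ together with the sharp pointwise decay \eqref{main1fdecay} follows immediately from the linear estimate \eqref{intlinest} applied to $u = e^{itH} f$ (choosing $\alpha$ small so that $|t|^{-3/4+\alpha}$ is still better than $|t|^{-1/2}$). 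The bootstrap on the $L^\infty_k$ component and the final modified scattering statement \eqref{mainasy} will then come from the same ODE-type analysis of $\partial_t \tilde f$ that drives the logarithmic phase, so the real work is in controlling $\partial_k \tilde f$ in $L^2_k$.

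Starting from the Duhamel formula \eqref{introD}, I would split $\mu = \mu_S + \mu_R$ following the decomposition $\K = \K_S + \K_R$, obtaining $\mathcal{N}_\mu = \mathcal{N}_S + \mathcal{N}_R$. For the $L^\infty_k$ bound, the singular part contains a dominant ``diagonal'' contribution where $\ell = n = k$ and $m = k$ (and its symmetric counterpart) on which the oscillating phase $-k^2+\ell^2-m^2+n^2$ vanishes; isolating this piece yields the leading term
\[
\partial_t \tilde f(t,k) \approx c \,|\tilde f(t,k)|^2 \tilde f(t,k)\,\frac{1}{t} + \mathcal{R}(t,k),
\]
where the remainder $\mathcal{R}$ gains a small power $t^{-1-\beta}$ by integrating by parts in $s$ against the non-stationary phases (a space-time resonance / normal form step) combined with the a priori $L^\infty_k$ control and the decay \eqref{main1fdecay}. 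In the non-generic case, the cancellation of $\mathcal{R}$ at $k=0$ is exactly what \eqref{nongenericresonance} enforces, which is why that hypothesis must be preserved along the flow. Integrating this ODE yields both the $L^\infty_k$ bootstrap bound and the modified scattering \eqref{mainasy} with $W_{+\infty}(k) := \lim_{t\to \infty} \tilde f(t,k) \exp(\tfrac{i}{2}\int_0^t |\tilde f|^2 (s+1)^{-1} ds)$.

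For the weighted bound ${\| \partial_k \tilde f \|}_{L^2} \lesssim \eta (1+t)^\alpha$ I would apply $\partial_k$ to the Duhamel identity and treat each piece separately. On the singular part, the commutator identity \eqref{introcomm} produces an expression of the form $\wtF[a(x)|u|^2 u]$ with $a$ a Schwartz weight; by Plancherel and Hölder this is bounded in $L^2_k$ by ${\| \langle x\rangle^{-N} u \|}_{L^\infty}^2 \, {\| u\|}_{L^2}$, which by the local decay estimate \eqref{introloc1} integrates in time to a growth at most $\log(1+t)$. On the regular part, $\partial_k$ falling on the phase brings out a factor comparable to $t\langle k\rangle$, which after Plancherel reduces the bound to controlling
\[
\int_0^t s \, {\big\| \langle\partial_x\rangle \langle x\rangle^{-\rho} |u(s)|^2 u(s) \big\|}_{L^2_x}\, ds,
\]
with $\rho$ determined by $\gamma$. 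Here the local $L^2$ smoothing/improved decay \eqref{introloc2} for $\langle x\rangle^{-1} \partial_x u$, combined with the pointwise bound \eqref{introloc1} on the two factors of $u$ (and $\bar u$), gives an integrand of size $s\cdot s^{-3/4}\cdot s^{-1}\cdot s^{-1/2} \eta^3$, which integrates to $O(\eta^3 (1+t)^{3/4})$ and closes the bootstrap provided $\alpha>3/4$ is admissible — and the freedom in $\alpha$ is precisely what the assumption $\gamma>5/2$ (resp. $7/2$) is designed to buy. The pseudo-differential operator bounds outlined in the introduction are used silently throughout to pass from estimates on $u$ to estimates on the various localized pieces coming from $\K_R$ and its derivatives.

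The hardest step is this closure of the $\partial_k \tilde f$ bound on the regular part, for two reasons: one must not lose any power of $t$ when $\partial_k$ hits the ``slow'' factors (Jost function and multiplier), which forces careful use of the explicit integral equation for the symbols and their $k$-derivatives without invoking $x$-regularity of $V$; and one must make the local smoothing estimate tight enough that the resulting time integral only grows polynomially with a small exponent. In the non-generic case an additional subtlety appears because the local decay \eqref{introloc1}–\eqref{introloc2} only holds under $\tilde f(0)=0$, so one must verify that this vanishing is propagated by the flow, which is exactly the content of the second condition in \eqref{nongenericresonance}.
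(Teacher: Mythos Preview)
Your overall architecture matches the paper's: bootstrap on $\|\tilde f\|_{L^\infty_k}$ and $\langle t\rangle^{-\alpha}\|\partial_k\tilde f\|_{L^2_k}$, split $\mu=\mu_S+\mu_R$, use the commutation structure on $\mu_S$ and localization on $\mu_R$. But two points in your closure of the weighted norm are genuinely off.

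\medskip
\textbf{The role of $\alpha$ is inverted.} You write that the regular-part integrand is $s\cdot s^{-3/4}\cdot s^{-1}\cdot s^{-1/2}$, that it integrates to $(1+t)^{3/4}$, and that ``the bootstrap closes provided $\alpha>3/4$ is admissible''. First, $s^{-5/4}$ integrates to $O(1)$, not $t^{3/4}$. More importantly, $\alpha$ cannot be taken large: the linear estimate \eqref{intlinest} gives $\|u\|_{L^\infty}\lesssim t^{-1/2}\|\tilde f\|_{L^\infty}+t^{-3/4}\|\partial_k\tilde f\|_{L^2}$, so unless $\alpha<1/4$ you lose the $t^{-1/2}$ decay and the bootstrap collapses. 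In the paper the mechanism is the opposite of what you describe: the decay hypothesis $\gamma>5/2$ (resp.\ $7/2$) provides an \emph{extra} spatial weight $\langle x\rangle^{-\zeta}$, $\zeta=\gamma-5/2>0$, which via the interpolated local decay \eqref{eq:interp} upgrades one $L^\infty$ factor from $s^{-1/2}$ to $s^{-1/2-\zeta/8}$. In the generic case one also uses the sharper $|t|^{-1}$ rate of Lemma~\ref{lem:localEn} (not the $|t|^{-3/4}$ of \eqref{introloc2}) for the $L^2$ smoothing piece. The resulting integrand is $s^{-1-\zeta/8+3\alpha}$, which is integrable once $\alpha$ is chosen small relative to $\zeta$. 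So $\gamma$ buys faster-than-$s^{-1}$ decay of the integrand, allowing $\alpha$ to be \emph{small}, not large.

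\medskip
\textbf{The singular part is missing its main term.} The identity \eqref{introcomm} is a commutator: it says $\partial_k\mathcal{N}_S(f,f,f)$ equals the three Leibniz terms $\mathcal{N}_S(\partial_kf,f,f)+\cdots$ \emph{plus} the localized piece $\mathcal{N}_S'$. You estimate only $\mathcal{N}_S'$ and conclude the singular contribution grows like $\log(1+t)$. But the Leibniz terms are the dominant ones: by Lemma~\ref{lem:inverseFD} and \eqref{remcoeffest1} they cost $\|\partial_k\tilde f\|_{L^2}\|u\|_{L^\infty}^2\lesssim s^{\alpha-1}\|u\|_{X_T}^3$, and it is precisely $\int_1^t s^{\alpha-1}\,ds\sim t^\alpha$ that produces the $t^\alpha$ growth in \eqref{main1fbounds}. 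Without these terms the bootstrap does not even see the feedback of $\|\partial_k\tilde f\|_{L^2}$ into itself.

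\medskip
A smaller point: for the $L^\infty_k$ bound and the asymptotics, the paper does not integrate by parts in $s$ (there are time resonances where $-k^2+\ell^2-m^2+n^2=0$). Instead it performs stationary phase in the frequency variables $(\ell,m,n)$ around the critical point $\ell=m=n=\pm k$ (Lemma~\ref{AsLem1}), which is what produces the $t^{-1}|\tilde f|^2\tilde f$ Hamiltonian ODE in \eqref{secasODE}.
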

	
	Combining \eqref{mainasy} above, and the linear asymptotic formula
	\begin{align}\label{linearasy}
		u\left(t,x\right)=\frac{e^{i\frac{x^{2}}{4t}}}{\sqrt{-2it}}
		\tilde{f}\left(t,-\frac{x}{2t}\right)+\mathcal{O}\left(t^{-\frac{1}{2}-\alpha}\right),\ \ t\gg1,
	\end{align}
	one can also derive the following asymptotic formula for $u$ in physical space:
	\begin{align}\label{nonlinearasy}
		u\left(t,x\right)=\frac{e^{i\frac{x^{2}}{4t}}}{\sqrt{-2it}}
		\exp\left(-\frac{i}{2}\left|W_{+\infty}\left(-\frac{x}{2t}\right)\right|^{2}\log t\right)
		W_{+\infty}\left(-\frac{x}{2t}\right)+\mathcal{O}\left(t^{-\frac{1}{2}-\alpha}\right),\ \ t\gg1.
	\end{align}
	
	
	For $t \rightarrow -\infty$ we can use the time-reversal symmetry as follows.
	
	\begin{rem}[Asymptotics for negative times]\label{remnegativetimes}
		Define $v(t,x)$ by
		\[
		\overline{v}(s,x) = u(t,x), \,\, s=-t.
		\]
		Note that $v$ is also a solution of the equation \eqref{eq:NLSE}.
		Then, the asymptotic behavior of $u$ as $t\rightarrow -\infty$ 
		is given by the (complex conjugate of the) asymptotics of $v$ as $s \rightarrow \infty$. 
		This in turn can be calculated from the (transform of the) profile of $v$
		according to \eqref{linearasy} and \eqref{mainasy},
		leading to the analog of \eqref{nonlinearasy} for negative times.
		
		More precisely, if the profile of $u(t,x)$ is $f$ in \eqref{main1prof},
		then the profile of $v$ is 
		\begin{align}\label{gfbar}
			g(s,x):=e^{-is\left(-\partial_{xx}+V\right)}v(s,x) = \overline{f}(-s,x),
			\qquad \widetilde{g}(s,k)=\widetilde{\overline{f}}(-s,x)
		\end{align}
		and we express this in terms of $\widetilde{f}(-s,k)$.
		By direct computations, starting from the definition of the distorted transform \eqref{tildeF},
		and using \eqref{eq:f_+-1},  \eqref{eq:f_--1}, one can see that,
		%
		for $k<0$,
		\begin{align}\label{fbartilda}
			\left(\begin{array}{c}
				\widetilde{\overline{f}}(k)
				\\
				\widetilde{\overline{f}}(-k)
			\end{array}\right) 
			& =\left(\begin{array}{cc}
				T(k) & R_+(k)
				\\
				R_{-}(k) & T(k)
			\end{array}\right)
			\left(\begin{array}{c}
				\overline{\widetilde{f}(-k)}
				\\
				\overline{\widetilde{f}(k)}
			\end{array}\right)
			:= S(k) \overline{\left(\begin{array}{c}
					\widetilde{f}(-k)
					\\
					\widetilde{f}(k)
				\end{array}\right)}.
		\end{align}
		where $S$ is the scattering matrix associated to the potential $V$; see \eqref{eq:scatMatrix}.
		
		From \eqref{linearasy}, it follows that we can write
		\[ v(s,x)=\frac{e^{ix^2/4s}}{\sqrt{-2is}}\widetilde{g}\left(s,-\frac{x}{2s}\right)
		+ \mathcal{O}\left(s^{-1/2-\alpha}\right), \qquad s \gg 1.\]
		hence, for $t \ll -1$,
		\begin{align}\label{negf0}
			\begin{split}
				u(t,x) 
				& 
				= \frac{e^{ix^{2}/4t}}{\sqrt{-2it}}
				\overline{\widetilde{\overline{f}}\left(t,\frac{x}{2t}\right)}+ \mathcal{O}\left(|t|^{-1/2-\alpha}\right).
			\end{split} 
		\end{align}
		Then, by \eqref{fbartilda}, for $x\leq0$ and $k_0:=x/2t>0$, we can express
		\begin{subequations}\label{negf1}
			\begin{align}
				\overline{\widetilde{\overline{f}}(t,k_0)}= T(k_0) \widetilde{f}(t,-k_0) + R_{-}(k_0)\widetilde{f}(t,k_0) 
			\end{align}
			and, for $x\geq0$,
			\begin{align}
				\overline{\widetilde{\overline{f}}(t,k_0)}= T(-k_0) \widetilde{f}(t,-k_0) + R_{+}(-k_0)\widetilde{f}(t,k_0).
			\end{align}
		\end{subequations}
		
		As in \eqref{mainasy}, one has that in terms of $\tilde{g}$, there exists $U_{+\infty}\in L^{\infty}$ such that
		\begin{align}\label{negg}
			\left|\tilde{g}\left(s,k\right)\exp\left(\frac{i}{2}\int_{0}^{s}\left|\tilde{g}\left(\tau,k\right)\right|^{2}
			\frac{d\tau}{\tau+1}\right) - U_{+\infty}(k)\right| \lesssim \eta s^{-\beta}
		\end{align}
		for some $\beta\in(0,\alpha)$ as $s\rightarrow\infty$.
		For $k\leq0$, define 
		\[
		G\left(s,k\right):=\left(\begin{array}{c}
			\tilde{g}(s,k)\\
			\tilde{g}(s,-k)
		\end{array}\right),
		\quad F\left(-s,-k\right):=\left(\begin{array}{c}
			\tilde{f}(-s,-k)\\
			\tilde{f}(-s,k)
		\end{array}\right),
		\\
		\quad 
		\mathrm{U}_{+\infty}\left(k\right) := \left(\begin{array}{c}
			U_{+\infty}(k)\\
			U_{+\infty}(-k)
		\end{array}\right).
		\]
		Then \eqref{fbartilda} reads
		\[
		G\left(s,k\right)=S\left(k\right)\overline{F\left(-s,-k\right)},
		\]
		and defining the operator $\mathcal{M}$ mapping a column vector to a $2\times2$ matrix by
		\[
		\mathcal{M}\left(\begin{array}{c}
			q_{1}\\
			q_{2}
		\end{array}\right):=\mathrm{diag}\left(\left|q_{1}\right|^{2},\left|q_{2}\right|^{2}\right),
		\]
		and taking complex conjugate in \eqref{negg},
		we get
		\begin{align}\label{negf4}
			\left|\overline{S\left(k\right)} F\left(-s,-k\right)-\exp\left(\frac{i}{2}\int_{0}^{s}
			\mathcal{M}\overline{S\left(k\right)} F\left(-s,-k\right)\frac{d\tau}{\tau+1}\right)
			\overline{\mathrm{U}_{+\infty}(k)}\right|\lesssim \eta s^{-\beta}.
		\end{align}
		It follows that 
		\[
		\Big| \big| \overline{S\left(k\right)}
		F\left(t,-k\right) \big| - \left| \mathrm{U}_{+\infty}(k)\right| \Big|
		\lesssim \eta |t|^{-\beta}, \qquad t \ll -1,
		\]
		and, since $S$ is unitary, $\tilde{f}(t,k)$ is bounded for negative times and \eqref{negf4} implies
		\begin{align}\label{negf5}
			\left| F\left(t,-k\right)- S(k)^T \exp\left(\frac{i}{2}
			\mathcal{M} \mathrm{U}_{+\infty}(k) \log |t| \right)
			\overline{\mathrm{U}_{+\infty}(k)}\right|\lesssim \eta s^{-\beta}.
		\end{align}
		
		Eventually one can put together \eqref{negf0}, \eqref{negf1} and \eqref{negf5}
		to derive the formula for the asymptotics in physical space as $t \rightarrow -\infty$
		analogous to \eqref{nonlinearasy}.

		%
		
	\end{rem}

	In this paper, we will mostly focus our attention 
	on the proof of the global bounds \eqref{main1fbounds}, 
	and, in particular, on the weighted-$L^2$ bound.
	As a corollary of our estimate we will also obtain the Fourier-$L^\infty$ bound for 
	most of the interactions (the ones corresponding to the regular measure $\mu_R$ introduced after \eqref{intromu}).
	Finally we will also give some details of the derivation of the $L^\infty$ bound 
	for the singular part of the measure. 

	\smallskip
	We now give some consequences of our results and corollaries of our proofs.
	
	
	\medskip
	\begin{enumerate}
		\item\label{remextracoeff} {\bf Non-constant coefficients}.
		Our analysis can be naturally generalized to the case of the equation
		\begin{align}\label{eqextracoeff}
			i\partial_{t}u-\partial_{xx}u+Vu+a(x)|u|^{2}u=0
		\end{align}
		where $a(x)\to1$ as $|x|\to\infty$ sufficiently fast. 
		Of course, one needs to assume the proper analogue of \eqref{nongenericresonance} in the non-generic case;
		e.g. $a$ is even in the case of an odd solution and an even zero energy resonance.
		
		One can carry out the proof in the exact same way by adding the term $(a(x)-1)|u|^2u$ 
		to the contribution from the regular part of the measure $\mu_R$, as defined in \eqref{eq:muR}.
		More precisely, one adds to the analysis a term like \eqref{musum} with a factor of $a(x)-1$ included in the integrand,
		and then notices that this has the same properties of the measure $\mu_{R,1}$ defined in \eqref{muR1},
		provided that $|a(x)-1|\lesssim \jx^{-\kappa}$, for $\kappa$ large enough.
		Our proof shows that $\kappa > 2$ suffices.

		\medskip
		\item {\bf Non-gauge invariant nonlinearities}.
		We can obtain the same result of Theorem \ref{thm:main1} also for models 
		with localized non-gauge invariant non-linear terms, that is, for equations of the form
		\begin{align}\label{eqextranon}
			i\partial_{t}u-\partial_{xx}u+Vu = |u|^{2}u + a_1(x)|u|^{2}u + a_2(x)u^3 + a_3(x)\bar{u}^3 + a_4(x)|u|^2\bar{u}
		\end{align}
		where $a_i$, $i=1,\dots 4$, are $C^1$ coefficients that decay fast enough, $|a_i(x)| \lesssim \jx^{-\kappa}$, 
		for $\kappa>2$.
		
		Equation \eqref{eqextranon} is an example of a simplified model
		that can arise from studying the stability of (topological) solitons in various settings;
		see for example the introduction of \cite{KGV}.
		The extension of our analysis from \eqref{eq:NLSE-1} to \eqref{eqextranon} is immediate
		(again assuming the proper analogue of \eqref{nongenericresonance} in the non-generic case).
		Indeed, all the cubic terms with a decaying coefficient can be included in the treatment of the 
		nonlinear terms corresponding to the regular part of the measure in Section \ref{sec:estimateregular},
		just by noticing that the argument there is insensitive to presence of $u$ or $\bar{u}$.
		
		
		\medskip
		\item {\bf The case $V=0$}.
		As a particular case of a non-generic potential, one can choose $V=0$.
		Then, the distorted Fourier transform coincides with the regular one and conditions
		\eqref{nongenericresonance} can be obtained by simply assuming that the data is odd, $u_0(x)=-u_0(-x)$.
		Moreover, in view of the \eqref{remextracoeff} above,
		we can also add any even decaying coefficient $b\in C^1$ in front of the nonlinearity and consider the model
		\begin{align}\label{eqflat}
			i\partial_{t}u-\partial_{xx}u = \pm |u|^{2}u + b(x)|u|^{2}u.
		\end{align}
		This gives us a result analogous to the recent result of \cite{LLS}
		on the Klein-Gordon equation (see also \cite{LS} and \cite{Sterb}). 
		
		
		
		\medskip
		\item {\bf Barrier potentials and the delta potential}.
		Given non-negative $K$ and $L$, consider \eqref{eq:NLSE-1} with a potential 
		\begin{equation}\label{eq:Potential}
			V(x) = V(K,L) :=
			\begin{cases}
				K & -L\leq x\leq L
				\\
				0 & \text{otherwise}
			\end{cases}.
		\end{equation}
		$V(K,L)$ represents a barrier of width $2L$ and height $K$. 
		For discussions about the physical importance of this potential see, for instance, 
		\cite{AGHH}.
		Note that since $K\geq0$, $V$ has no bound states. Moreover, it is a generic potential
		and therefore Theorem \ref{thm:main1} can be applied. 
		
		Furthermore, we observe that the potential $V(\epsilon^{-1},\epsilon)$ converges, 
		as $\epsilon \rightarrow 0$, to $2\delta_{0}$
		in the sense of distributions, where $\delta_0$ is Dirac's delta.
		Since $V(\epsilon^{-1},\epsilon)$ has uniformly bounded weighted $L^1$-norms,
		so do the corresponding solutions of NLS, in the space that we are considering. 
		
		One can also check that 
		the Jost functions for the barrier potentials converge nicely to the Jost functions 
		associated to the $2\delta_0$ potential. 
		All of these can be computed explicitly; see for example \cite{LLbook1},
		and also Kulaev-Shabat \cite{KuSh} for a discussion of more general distributional potentials. 
		In view of the convergence of the Jost functions, 
		the spectral distributions will also converge and, in the case of \eqref{eq:Potential}, 
		the regular part will disappear when passing to the limit.
		
		Using our result and a limiting argument one can then obtain global-in-time bounds and asymptotics
		for the cubic NLS with a repulsive delta potential $V = 2\delta_0$.
		This provides an alternative proof of the recent result of Masaki-Murphy-Segata \cite{MMS}. 
		
	\end{enumerate}
	


	\medskip
	\subsection{Organization}
	In Section \ref{sec:JostSpec} we recall basic properties of 
	the generalized eigenfunction for the Schr\"odinger operator 
	and the distorted Fourier transform;
	we then establish a series of elementary estimates for pseudo-differential operators that have 
	symbols related to the generalized eigenfunctions (or Jost solutions).
	In Section \ref{sec:Linear} we first present the decomposition of the eigenfunctions
	into a flat-looking and a localized components,
	and then establish linear estimates, including pointwise decay and local improved decay. 
	In Sections \ref{sec:Cubic} and \ref{sec:estimateregular},
	we give the nonlinear estimates for the `singular' and `regular' parts of the cubic terms. 
	In Section \ref{sec:pointwisebound} we discuss the pointwise bound for the 
	distorted Fourier transform of the profile.
	Finally, in Appendix \ref{sec:localdecay}, we present an improved local decay estimate for generic potentials.
	
	\medskip
	\subsection*{Notation}
	As usual, \textquotedblleft $A:=B\lyxmathsym{\textquotedblright}$
	or $\lyxmathsym{\textquotedblleft}B=:A\lyxmathsym{\textquotedblright}$
	is the definition of $A$ by means of the expression $B$. We use
	the notation $\langle x\rangle := (1+|x|^{2})^{\frac{1}{2}}$.
	For positive quantities $a$ and $b$, we write $a\lesssim b$ for
	$a\leq Cb$ where $C$ is a universal constant,
	and $a\simeq b$ when $a\lesssim b$ and $b\lesssim a$. 
	We denote $u_{t}:=\frac{\partial}{\partial_{t}}u$, $u_{xx}:=\frac{\partial^{2}}{\partial x^{2}}u$.
	The (regular) Fourier transforms is defined as
	\begin{equation}
		\hat{h}\left(\xi\right)=\mathcal{F}\left[h\right]\left(\xi\right)=\frac{1}{\sqrt{2\pi}}
		\int e^{-ix\xi}h(x)\,dx.\label{eq:FT}
	\end{equation}


	\medskip
	\subsection*{Acknowledgments}
	F.P. was supported in part by a startup grant from the University of Toronto, a
	Connaught Fund New Researcher grant, and the NSERC grant RGPIN-2018-06487.

	\bigskip
	\section{Jost functions, spectral theory and PDO estimates \label{sec:JostSpec}}
	
	In this section, we collect some basic facts about Jost functions and the distorted Fourier transform.
	
	\subsection{Basic properties of general Jost functions}
	In this subsection, we recall some basic properties of Jost functions.
	The facts provided here hold for all potentials, see 
	\cite{DT} and 
	\cite{GPR}.
	
	The Jost functions $\psi_+(x,k)$ and $\psi_-(x,k)$
	are defined as solutions to 
	\begin{align}\label{psipm}
		H\psi_{\pm}(x,k)=\left(-\partial_{xx}+V\right)\psi_{\pm}(x,k)=k^{2}\psi_{\pm}(x,k)
	\end{align}
	such that
	\begin{align}\label{psipmlim}
		\lim_{x\rightarrow\infty}\left|e^{-ikx}\psi_+(x,k)-1\right|=0,
		\qquad  \lim_{x\rightarrow-\infty}\left|e^{ikx}\psi_-(x,k)-1\right|=0. 
	\end{align}
	We let 
	\begin{align}\label{mpm}
		m_{\pm}(x,k)=e^{\mp ikx}\psi_{\pm}(x,k).
	\end{align}
	Then for fixed $x$, $m_{\pm}$ is analytic in $k$ for $\Im k>0$
	and continuous up to $\Im k\geq0$.
	
	We define
	\begin{align}\label{defW_+-}
		\mathcal{W}_{+}^{s}(x)=\int_x^\infty \jy^{s}\left|V(y)\right|\,dy,
		\qquad \mathcal{W}_{-}^{s}(x)=\int_{-\infty}^{x}\jy^{s}\left|V(y)\right|\,dy.
	\end{align}
	Note that if $V(y)$ decays fast enough, then $\mathcal{W}_{\pm}^{s}(x)$
	also decay as $x\rightarrow\pm\infty$ respectively.
	
	\begin{lem}\label{lem:Mestimates}
		For every $s\ge0$, we have the estimates: 
		\begin{align}\label{Mestimates1}
			\begin{split}
				& \left|\partial_{k}^{s}\left(m_{\pm}(x,k)-1\right)\right|\lesssim\frac{1}{\jk}\mathcal{W}_{\pm}^{s+1}(x),\qquad \pm x\geq-1,
				\\
				& \left|\partial_{k}^{s}\left(m_{\pm}(x,k)-1\right)\right|\lesssim\frac{1}{\jk}\jx^{s+1}, \qquad \pm x\leq1.
			\end{split}
		\end{align}
		Also,
		\begin{align}\label{Mestimates1'}
			\begin{split}
				& \left|\partial_{k} \left(m_{\pm}(x,k)-1\right)\right|\lesssim\frac{1}{|k|}\mathcal{W}_{\pm}^{1}(x),\qquad \pm x\geq-1.
			\end{split}
		\end{align}
		Moreover
		\begin{align}\label{Mestimates2}
			\begin{split}
				& \left|\partial_{k}^{s} \partial_x m_{\pm}(x,k) \right| \lesssim \mathcal{W}_{\pm}^{s}(x), \qquad \pm x\geq-1,
				\\
				& \left|\partial_{k}^{s} \partial_x m_{\pm}(x,k) 
				\right|\lesssim\jx^{s}, \qquad \pm x\leq1.
			\end{split}
		\end{align}
	\end{lem}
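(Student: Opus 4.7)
The plan is to start from the Volterra integral equation that $m_\pm$ satisfies and iterate. Concretely, conjugating $H\psi_+ = k^2\psi_+$ by $e^{-ikx}$ and integrating twice using the boundary conditions $m_+\to 1$ and $\partial_x m_+\to 0$ as $x\to+\infty$ yields
\begin{equation*}
m_+(x,k) = 1 + \int_x^\infty D(x,y;k)\, V(y)\, m_+(y,k)\,dy,
\qquad D(x,y;k) := \frac{e^{2ik(y-x)}-1}{2ik},
\end{equation*}
and an analogous equation for $m_-$ on $(-\infty,x]$. The kernel $D$ admits the elementary bound $|D(x,y;k)|\le \min\{y-x,\,1/|k|\}\lesssim \langle y-x\rangle/\langle k\rangle$, and its $k$-derivatives satisfy $|\partial_k^j D(x,y;k)|\lesssim \langle y-x\rangle^{j+1}/\langle k\rangle$ (one factor of $y-x$ per $k$-derivative, taken from differentiating the exponential, plus the baseline $1/\langle k\rangle$).

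For the basic estimate \eqref{Mestimates1} with $s=0$, I would run the standard Volterra fixed-point argument: in the regime $x\ge-1$ one has $y-x\le\langle y\rangle+1\lesssim\langle y\rangle$ for $y\ge x$, so
\begin{equation*}
|m_+(x,k)-1|\lesssim \frac{1}{\langle k\rangle}\int_x^\infty\langle y\rangle |V(y)|\,dy\cdot \|m_+\|_\infty
= \frac{1}{\langle k\rangle}\mathcal{W}_+^1(x),
\end{equation*}
after absorbing $\|m_+\|_\infty\lesssim 1$ by closing the fixed point (possible because $\jx^\gamma V\in L^1$). For $x\le 1$ one instead uses $y-x\le\langle x\rangle\langle y\rangle$, producing the factor $\langle x\rangle=\jx^{s+1}$. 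Higher $k$-derivatives are handled by induction: differentiating the integral equation $s$ times produces terms of the form $\int_x^\infty\partial_k^{j} D(x,y;k)\,V(y)\,\partial_k^{s-j}m_+(y,k)\,dy$, which by the kernel bound above costs one extra power of $\langle y\rangle$ (or $\langle x\rangle\langle y\rangle$) compared to the case $s=0$, together with the already-controlled lower-order factor $\partial_k^{s-j}m_+$.

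The sharper one-derivative bound \eqref{Mestimates1'} requires the more careful observation that $\partial_k D$ splits as $(y-x)\cdot e^{2ik(y-x)}/(ik)$ minus a multiple of $D/k$; both pieces carry a factor of exactly $1/|k|$ (not $1/\langle k\rangle$) but only one weight $(y-x)$, so only $\mathcal{W}_+^1$ enters. The bound \eqref{Mestimates2} on $\partial_x m_\pm$ is essentially free: differentiating the Volterra identity with respect to $x$ (the boundary term vanishes since $D(x,x;k)=0$) gives
\begin{equation*}
\partial_x m_+(x,k) = -\int_x^\infty e^{2ik(y-x)}\,V(y)\,m_+(y,k)\,dy,
\end{equation*}
and now the kernel is a pure phase of modulus $1$ with no $1/\langle k\rangle$ gain, so each $k$-derivative simply produces one factor of $y-x$, leading to the weight $\mathcal{W}_+^s$ for $\pm x\ge -1$, respectively $\jx^s$ for $\pm x\le 1$, by the same case split as above. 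The results for $m_-$ follow by the symmetry $x\mapsto -x$.

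The only genuinely delicate point is the closure of the Volterra iteration uniformly in $k$, since for $|k|\ll 1$ one loses the $1/\langle k\rangle$ smallness and must rely on the smallness of the tail $\mathcal{W}_+^1(x)$ at large $x$ to contract; this is where the assumption $\jx^\gamma V\in L^1$ with $\gamma\ge 1$ is essential, and it is the step I would write out most carefully. Everything else is routine iteration and differentiation under the integral sign.
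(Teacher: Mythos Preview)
Your approach is essentially the same as the paper's: the paper simply records the Volterra equation \eqref{minteq} and defers to Deift--Trubowitz, Weder, and \cite[Appendix A]{GPR} for the iteration, while you have spelled out the standard kernel bounds, induction on $s$, and the differentiated identity for $\partial_x m_+$. The one point worth tightening is your remark about ``closing the fixed point'': for small $|k|$ the contraction constant is not small, and the correct mechanism (as in Deift--Trubowitz) is the $1/n!$ gain from the ordered Volterra iteration rather than a Banach contraction---but you flag exactly this as the delicate step, so the plan is sound.
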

	
	\begin{proof}
		The proofs of these estimates follow from analyzing the Volterra equation satisfied by $m_\pm$,
		that is,
		\begin{align}\label{minteq}
			m_\pm(x,\lambda)=1\pm\int_x^{\pm\infty} D_{\lambda}(\pm(y-x))V(y)m_\pm(y,\lambda)\,dy,
			\qquad D_{\lambda}(x)=\frac{e^{2i\lambda x}-1}{2i\lambda};
		\end{align} 
		as in Deift-Trubowitz \cite{DT}, Weder \cite{We1} or \cite[Appendix A]{GPR}. 
		The estimates \eqref{Mestimates1'} follow from Lemma 2.1 in \cite{We2}.
	\end{proof}

	Denote $T(k)$ and $R_{\pm}(k)$ the transmission
	and reflection coefficients associated to the potential $V$ respectively.
	For more details, see Deift-Trubowitz \cite{DT}. With these coefficients,
	one can write
	\begin{equation}\label{eq:f_+-1}
		\psi_{+}(x,k)=\frac{R_{-}(k)}{T(k)}\psi_{-}(x,k)+\frac{1}{T(k)}\psi_{-}\left(x,-k\right)
	\end{equation}
	\begin{equation}\label{eq:f_--1}
		\psi_{-}(x,k)=\frac{R_{+}(k)}{T(k)}\psi_{+}(x,k)+\frac{1}{T(k)}\psi_{+}\left(x,-k\right).
	\end{equation}
	Using these formulae, one has
	\[
	\psi_{+}(x,k)\sim\frac{R_{-}(k)}{T(k)}e^{-ikx}+\frac{1}{T(k)}e^{ikx}\ \ \ x\rightarrow-\infty,
	\]
	\[
	\psi_{-}(x,k)\sim\frac{R_{+}(k)}{T(k)}e^{ikx}+\frac{1}{T(k)}e^{-ikx}\ \ \ x\rightarrow\infty.
	\]
	Moreover, these coefficients are given explicitly by
	\begin{align}\label{defT}
		\frac{1}{T(k)}=1-\frac{1}{2ik}\int V(x)m_{\pm}(x,k)\,dx, 
	\end{align}
	\begin{align}\label{defR}
		\frac{R_{\pm}(k)}{T(k)}=\frac{1}{2ik}\int e^{\mp2ikx}V(x)m_{\mp}(x,k)\,dx, 
	\end{align}
	and satisfy
	\begin{align}\label{TRid}
		\begin{split}
			& T\left(-k\right)=T(k),\ \ \ R_{\pm}\left(-k\right)=\overline{R_{\pm}(k),}
			\\
			& \left|R_{\pm}(k)\right|^{2}+\left|T(k)\right|^{2}=1,\ \ T(k)\overline{R_{-}(k)}+\overline{T(k)}R_{+}(k)=0.
		\end{split}
	\end{align}
	
	\begin{definition}\label{def:generic}
		$V$ is defined to be a ``generic'' potential if
		\[
		\int V(x)m_{\pm}\left(x,0\right)\,dx\neq0.
		\]
	\end{definition}
	
	If a potential is generic, then by the relation given above, we know that
	\[
	T\left(0\right)=0,\ \ \ R_{\pm}\left(0\right)=-1.
	\]
	Given $T(k)$ and $R_{\pm}(k)$ as above, we
	can define the scattering matrix associated to the potential by
	\begin{equation}
		S(k):=\left(\begin{array}{cc}
			T(k) & R_{+}(k)
			\\
			R_{-}(k) & T(k)
		\end{array}\right),\ \ S^{-1}(k):=\left(\begin{array}{cc}
			\overline{T(k)} & \overline{R_{-}(k)}
			\\
			\overline{R_{+}(k)} & \overline{T(k)}
		\end{array}\right).\label{eq:scatMatrix}
	\end{equation}
	
	We have the following lemma on the coefficients.

	\begin{lem}\label{estiTR}
		Assuming that $\jx^{2}V\in L^1$, we have the uniform estimates for $k\in\mathbb{R}$:
		\[
		\left|\partial_{k}T(k)\right|+\left|\partial_{k}R_{\pm}(k)\right|\lesssim\frac{1}{\jk}.
		\]
	\end{lem}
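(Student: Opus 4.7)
The plan is to differentiate the explicit formulas \eqref{defT} and \eqref{defR} for $T$ and $R_\pm$, and bound each resulting term using the $\partial_k$-estimates for $m_\pm$ from Lemma \ref{lem:Mestimates}. Since those formulas carry a $1/(2ik)$ prefactor, differentiation produces apparent $1/k$ and $1/k^2$ singularities; the bulk of the work is a short algebraic reorganization that exploits the formulas themselves to absorb these singularities, combined with elementary bounds once $|k|$ is bounded away from $0$.

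For the transmission coefficient, differentiating $1/T(k) = 1 - (2ik)^{-1}\int V m_+\,dx$ gives
\[
\partial_k \frac{1}{T(k)} = \frac{1}{2ik^2}\int V m_+\,dx - \frac{1}{2ik}\int V\,\partial_k m_+\,dx.
\]
Substituting $\int V m_+\,dx = 2ik(1-1/T)$ into the first term and multiplying by $-T^2$ yields
\[
\partial_k T(k) = \frac{T(k)\bigl(1-T(k)\bigr)}{k} + \frac{T(k)^2}{2ik}\int V\,\partial_k m_+\,dx.
\]
For $|k|\ge 1$ the first summand is $\le 2/|k|$ using $|T|\le 1$; by Lemma \ref{lem:Mestimates} with $s=1$ together with $\langle x\rangle^2 V\in L^1$ one has $|\int V\,\partial_k m_+\,dx|\lesssim\langle k\rangle^{-1}$, so the second summand is $O(|k|^{-1}\langle k\rangle^{-1})$. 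Hence $|\partial_k T|\lesssim 1/|k|$ on $|k|\ge 1$.

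For the reflection coefficients, differentiating $R_\pm/T = (2ik)^{-1}\int e^{\mp 2ikx}V m_\mp\,dx$ and using $\int e^{\mp 2ikx}V m_\mp\,dx = 2ik\,R_\pm/T$ in the first resulting term gives
\[
\partial_k\frac{R_\pm(k)}{T(k)} = -\frac{R_\pm}{kT} \mp \frac{1}{k}\int x\,e^{\mp 2ikx}\,V m_\mp\,dx + \frac{1}{2ik}\int e^{\mp 2ikx}\,V\,\partial_k m_\mp\,dx.
\]
On $|k|\ge 1$ each term is $O(1/|k|)$: the first uses that $|T|$ is bounded below on $|k|\ge 1$ (since $T$ is continuous, nonvanishing for $k\ne 0$, and tends to $1$ at infinity); the second uses $\langle x\rangle V\in L^1$ (implied by our decay hypothesis); the third uses Lemma \ref{lem:Mestimates}. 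Reconstructing $\partial_k R_\pm = T\,\partial_k(R_\pm/T) + (R_\pm/T)\,\partial_k T$ and applying the $\partial_k T$ bound above closes the $|k|\ge 1$ estimate.

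The main obstacle is the near-zero regime $|k|\le 1$, where one must show that $\partial_k T$ and $\partial_k R_\pm$ are merely bounded, i.e.\ that $T$ and $R_\pm$ extend $C^1$ across $k=0$. This is classical scattering theory under $\langle x\rangle^2 V\in L^1$: direct analysis of the Volterra equation \eqref{minteq} gives $C^1$ dependence of $m_\pm(x,\cdot)$ with the growth bounds of Lemma \ref{lem:Mestimates}, and the apparent $1/k$, $1/k^2$ factors in the differentiated formulas cancel against the vanishing of the integrands at $k=0$---namely $T(0)\in\{0,1\}$ (generic versus non-generic) ensures $T(1-T)/k$ is finite at the origin, and, in the non-generic case, the enhanced vanishing $\int V m_+(x,k)\,dx = O(k^2)$ at $k=0$ ensures the remaining $T^2/(2ik)$- and $1/k$-prefactored integrals are bounded. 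Together with the $1/|k|$ decay on $|k|\ge 1$, this yields the uniform bound $|\partial_k T(k)| + |\partial_k R_\pm(k)|\lesssim 1/\langle k\rangle$.
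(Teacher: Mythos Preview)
The paper does not prove this lemma; it is stated as a classical fact (the neighboring Lemma~\ref{lem:estiTRTaylor} is explicitly referred to Deift--Trubowitz~\cite{DT}, and the present one has the same status). Your treatment of the region $|k|\ge 1$ is correct and is exactly the kind of argument one finds in the classical references: differentiate \eqref{defT}--\eqref{defR}, use Lemma~\ref{lem:Mestimates} for $\partial_k m_\pm$, and the lower bound on $|T|$ away from $0$.

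Your handling of $|k|\le 1$ in the non-generic case, however, contains two concrete errors. First, the assertion $T(0)\in\{0,1\}$ is false in general: in the non-generic case one has $\psi_+(\cdot,0)=c\,\psi_-(\cdot,0)$ for some nonzero constant $c$, and $T(0)$ is an explicit function of $c$ which need not equal $1$. Consequently $T(1-T)/k$ is genuinely singular at $k=0$ and cannot be bounded termwise. Second, the claimed ``enhanced vanishing'' $\int V m_+(x,k)\,dx=O(k^2)$ is not available under the hypothesis $\jx^2 V\in L^1$: Lemma~\ref{lem:Mestimates} gives only $C^1$ regularity of $k\mapsto m_+(x,k)$, hence only $C^1$ regularity of $A(k):=\int V m_+$, and in the non-generic case this yields $A(k)=A'(0)k+o(k)$, not $O(k^2)$. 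In your formula $\partial_k T = 2i(kA'-A)/(2ik-A)^2$ (which is the clean way to combine your two terms), the non-generic case gives numerator $o(k)$ and denominator $\sim k^2$, so one obtains only $\partial_k T=o(1/k)$ without further input. The singularities of your two displayed summands do cancel at leading order, but exploiting that cancellation to reach an $O(1)$ bound requires a sharper low-energy expansion than mere $C^1$ of $A$; this is precisely the delicate analysis carried out in \cite{DT}. So for the non-generic endpoint you should either cite \cite{DT} directly (as the paper effectively does) or supply the Wronskian-based low-energy argument rather than the termwise bounds you sketched.
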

	
	Moreover, for a generic potential, the associated transmission and
	reflection coefficients have the following Taylor expansions near
	$k\sim0$. For a detailed proof, see page 144 in Deift-Trubowitz \cite{DT}.
	
	\begin{lem}\label{lem:estiTRTaylor}
		Assuming that $\jx^2 V\in L^1$ and $V$
		is generic, then
		\[
		T(k)=\alpha k+\mathcal{O}(k^2),\ \alpha\neq0,\ \text{as}\ k\rightarrow0,
		\]
		and
		\[
		1+R_{\pm}(k)=\alpha_{\pm}k+\mathcal{O}(k^2),\ \text{as}\ k\rightarrow0.
		\]
	\end{lem}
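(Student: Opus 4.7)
My plan is to prove both expansions by starting from the explicit formulas \eqref{defT} and \eqref{defR} and Taylor-expanding the Jost functions $m_\pm(x,k)$ in $k$ around $k=0$, using the estimates from Lemma \ref{lem:Mestimates} together with the assumption $\jx^2 V \in L^1$ to control the remainders.

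\textbf{Step 1: Expansion of $T(k)$.} Using \eqref{defT}, write
\[
\frac{1}{T(k)} = 1 - \frac{1}{2ik}\int V(x)\, m_{+}(x,k)\, dx.
\]
Taylor-expanding
\[
m_+(x,k) = m_+(x,0) + k \int_0^1 \partial_k m_+(x,\tau k)\, d\tau,
\]
and using Lemma \ref{lem:Mestimates} with $s=1$, namely $|\partial_k m_+(x,k)| \lesssim \jk^{-1}\,\mathcal{W}_+^{2}(x)$ for $x \geq -1$ and $|\partial_k m_+(x,k)| \lesssim \jk^{-1}\jx^{2}$ for $x \le 1$, I can integrate against $V$ since $\jx^2 V \in L^1$. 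This gives
\[
\int V(x)m_+(x,k)\,dx = c + k\,b_1 + O(k^2),
\qquad c := \int V(x)m_+(x,0)\,dx,
\]
with $c \neq 0$ by the generic assumption. Substituting,
\[
\frac{1}{T(k)} = -\frac{c}{2ik} + \Big(1 - \frac{b_1}{2i}\Big) + O(k).
\]
Inverting this Laurent expansion,
\[
T(k) = \frac{-2ik}{c}\,\Big(1 + O(k)\Big)^{-1} = \alpha k + O(k^2),\qquad \alpha := -\tfrac{2i}{c} \neq 0.
\]

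\textbf{Step 2: Expansion of $R_\pm(k)$.} From \eqref{defR},
\[
\frac{R_{\pm}(k)}{T(k)} = \frac{1}{2ik}\int e^{\mp 2ikx}V(x)m_{\mp}(x,k)\,dx.
\]
Expanding $e^{\mp 2ikx} = 1 \mp 2ikx + O(k^2 x^2)$ and $m_\mp(x,k) = m_\mp(x,0) + k\,\partial_k m_\mp(x,0) + O(k^2)$, each correction term is integrable against $V(x)$ thanks to $\jx^2 V \in L^1$ and the bounds from Lemma \ref{lem:Mestimates}. Using the identity $\int V m_+(x,0)\,dx = \int V m_-(x,0)\,dx = c$ (a standard Wronskian computation coming from \eqref{eq:f_+-1}--\eqref{eq:f_--1} at $k=0$), we obtain
\[
\frac{R_{\pm}(k)}{T(k)} = \frac{c}{2ik} + d_\pm + O(k),
\]
for explicit constants $d_\pm$. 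Multiplying by the expansion of $T(k)$ from Step 1,
\[
R_\pm(k) = \bigl(\alpha k + O(k^2)\bigr)\Bigl(\tfrac{c}{2ik} + d_\pm + O(k)\Bigr) = -1 + \alpha_\pm k + O(k^2),
\]
where $\alpha_\pm = \alpha\, d_\pm$, which is the claim.

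\textbf{Main obstacle.} The routine algebraic manipulations are straightforward; the real content lies in rigorously justifying the Taylor expansions of $m_\pm(x,k)$ in $k$ when the only regularity we have on $V$ is weighted integrability $\jx^2 V \in L^1$. The bounds \eqref{Mestimates1} and \eqref{Mestimates1'} from Lemma \ref{lem:Mestimates} are exactly tailored for this: the $s=1$ derivative bound gives a weight $\mathcal{W}_\pm^{2}$ (or $\jx^2$) which is integrable against $V$ precisely under the hypothesis $\jx^2 V \in L^1$. This is the delicate place where the hypothesis on $V$ is consumed; everything else is algebraic manipulation of the expansions and use of the generic assumption $c \neq 0$ to justify inversion in Step 1.
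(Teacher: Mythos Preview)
The paper does not give its own proof of this lemma; it simply refers the reader to page 144 of Deift--Trubowitz \cite{DT}. Your argument is essentially the standard one found there: expand $m_\pm(x,k)$ in $k$ using the Volterra estimates, feed this into the explicit formulas \eqref{defT}--\eqref{defR}, and invert using the generic hypothesis $c\neq 0$.

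One small point: your claimed remainder $O(k^2)$ in the expansion $\int V m_+(x,k)\,dx = c + b_1 k + O(k^2)$ is slightly stronger than what the hypothesis $\jx^2 V \in L^1$ directly yields, since bounding the second $k$-derivative of $m_+$ via Lemma \ref{lem:Mestimates} with $s=2$ would require a weight $\jx^3$. With only $\jx^2 V \in L^1$ you get $o(k)$ (by dominated convergence on the first-order remainder), not $O(k^2)$. This is harmless here: the lemma as stated only asserts $T(k)=\alpha k + \mathcal{O}(k)$ and $1+R_\pm(k)=\alpha_\pm k + \mathcal{O}(k)$, and everywhere the paper uses it (e.g.\ the bound \eqref{T/k} in the appendix, or the manipulations in Lemma \ref{lem:localEn}) one only needs $T(k)/k$ and $(1+R_\pm(k))/k$ bounded near zero, which your argument delivers.
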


	\medskip
	\subsection{Pseudo-differential bounds}
	Starting from the estimates for the Jost functions in Lemma \ref{lem:Mestimates}, 
	we want to obtain $L^2\mapsto L^2$ bounds for pseudo-differential operators whose symbols are given by $m_\pm(x,\lambda)$
	and their derivatives.
	One possibility to obtain this type of bounds is to apply a criterion 
	of Hwang \cite{Hwang}, also used in \cite{GPR}, requiring the essential boundedness of 
	the first derivatives (in $x$ and $\lambda$) and the second mixed derivative of the symbol.
	However, the use of this tool as a black-box requires sufficiently strong conditions on the potentials,
	both in terms of decay and regularity.
	The approach we present below appears to be more general, since it requires less assumptions on the potential
	(no Sobolev regularity and less spatial decay)
	and at the same time more direct, as it relies on the explicit structure of the Jost functions and their derivatives.
	
	
	
	\begin{lem}\label{lem:pseeasy}
		Suppose $\jx^{\gamma}V\in L^1$ with $\gamma>3/2$. 
		Then
		\begin{equation}\label{eq:pseeasy}
			\left\Vert \mathrm{1}_{x\geq-1}\int e^{i\lambda x}m_+(x,\lambda)g(\lambda)\,d\lambda
			\right\Vert_{L^2_x}\lesssim{\| g \|}_{L^2}.
		\end{equation}
		Similar estimates hold for $m_{-}$ restricted on $x\leq1$.	
		%
		
		In addition, if we assume $\gamma>\beta+3/2$, then
		\begin{equation}\label{eq:mPDO+}
			\left\Vert \mathrm{1}_{x\geq-1}\jx^{\beta}
			\int e^{i\lambda x}\left(m_+(x,\lambda)-1\right)g(\lambda)\,d\lambda\right\Vert_{L^2_x}
			\lesssim \left\Vert g\right\Vert_{L^{2}}.
		\end{equation}
		Similar estimates hold for $m_{-}-1$ restricted on $x\leq1$.
	\end{lem}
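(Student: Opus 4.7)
My plan is to avoid heavier PDO machinery (such as Hwang's criterion) and instead exploit directly the pointwise estimates from Lemma~\ref{lem:Mestimates}. The key observation is that the bound $|m_+(x,\lambda) - 1| \lesssim \langle\lambda\rangle^{-1} \mathcal{W}_+^1(x)$ on $x \geq -1$ makes $m_+(x,\cdot) - 1$ lie in $L^2_\lambda$ pointwise in $x$, with a norm that itself decays in $x$. This reduces both \eqref{eq:pseeasy} and \eqref{eq:mPDO+} to a pointwise Cauchy--Schwarz in $\lambda$ followed by a scalar integration in $x$.

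For \eqref{eq:pseeasy}, I write $m_+ = 1 + (m_+ - 1)$. The contribution of $1$ is $\sqrt{2\pi}\,\mathcal{F}^{-1}[g](x)$, whose $L^2_x(\mathbb{R})$-norm is $\|g\|_{L^2}$ by Plancherel, a fortiori on $\{x \geq -1\}$. For the $(m_+ - 1)$-piece, Cauchy--Schwarz in $\lambda$ yields
\begin{equation*}
\Big|\int e^{i\lambda x}(m_+(x,\lambda) - 1) g(\lambda)\,d\lambda\Big|
  \leq \|m_+(x,\cdot) - 1\|_{L^2_\lambda}\, \|g\|_{L^2},
\end{equation*}
and Lemma~\ref{lem:Mestimates} (with $s=0$) together with $\|\langle\lambda\rangle^{-1}\|_{L^2_\lambda} < \infty$ gives $\|m_+(x,\cdot) - 1\|_{L^2_\lambda} \lesssim \mathcal{W}_+^1(x)$. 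Since $\jx^\gamma V \in L^1$ with $\gamma > 1$, factoring $\jy^{1-\gamma} \leq \jx^{1-\gamma}$ out of $\mathcal{W}_+^1(x) = \int_x^\infty \jy |V(y)|\,dy$ shows $\mathcal{W}_+^1(x) \lesssim \jx^{1-\gamma}$ on $x \geq -1$. Squaring and integrating in $x$ yields $\|g\|_{L^2}^2 \int_{-1}^\infty \jx^{2(1-\gamma)}\,dx$, which is finite precisely when $\gamma > 3/2$.

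The weighted estimate \eqref{eq:mPDO+} is obtained by running the same argument with an extra $\jx^\beta$ weight: the resulting $x$-integral $\int_{-1}^\infty \jx^{2\beta+2(1-\gamma)}\,dx$ converges iff $\gamma > \beta + 3/2$, matching the stated hypothesis. The symmetric estimates for $m_-$ on $\{x \leq 1\}$ follow by the same argument using the analogous bounds in Lemma~\ref{lem:Mestimates}, or via the substitution $x \mapsto -x$ which interchanges $m_\pm$ after replacing $V(x)$ by $V(-x)$.

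The main ``obstacle'' here is really only psychological: one must resist invoking generic symbolic calculus that requires Sobolev regularity of the symbol in $x$, and observe instead that the $\langle\lambda\rangle^{-1}$ decay of $m_+(x,\cdot) - 1$ already guarantees $L^2_\lambda$-integrability, with the $x$-dependence controlled purely by the weighted $L^1$-decay of $V$. No further structural input is needed beyond Lemma~\ref{lem:Mestimates}, which is in keeping with the paper's stated strategy of exploiting the explicit integral-equation structure of the Jost functions to bypass stronger regularity hypotheses.
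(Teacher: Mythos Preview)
Your proof is correct. Both you and the paper split $m_+ = 1 + (m_+-1)$ and dispose of the ``$1$'' piece by Plancherel. For the remainder, however, the routes diverge: the paper returns to the Volterra equation $m_+-1 = \int_x^\infty D_\lambda(y-x)V(y)m_+(y,\lambda)\,dy$, splits into $|\lambda|\le 1$ and $|\lambda|\ge 1$, and uses the separate bounds $|D_\lambda(z)|\lesssim |z|$ and $|D_\lambda|\lesssim |\lambda|^{-1}$ before applying Cauchy--Schwarz in $\lambda$. You instead invoke the ready-made pointwise estimate \eqref{Mestimates1} as a black box, obtaining $\|m_+(x,\cdot)-1\|_{L^2_\lambda}\lesssim \mathcal{W}_+^1(x)$ in one stroke, and then reduce everything to the scalar integrability of $\jx^{2(1-\gamma)}$ (resp.\ $\jx^{2\beta+2(1-\gamma)}$) on $[-1,\infty)$.

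Your argument is shorter and arguably cleaner for this particular lemma. The paper's more hands-on approach pays off later: the same integral-equation manipulation (in particular the identity $\partial_x m_+ = \int_x^\infty e^{2i\lambda(y-x)}V(y)m_+(y,\lambda)\,dy$) is reused in Lemmas~\ref{lem:psehard1}--\ref{lem:PDOmxkk}, where the derivative symbols $\partial_x m_+$ and $\partial_\lambda\partial_x m_+$ do \emph{not} enjoy an $\langle\lambda\rangle^{-1}$ factor and your Cauchy--Schwarz-in-$\lambda$ shortcut would not close directly. One small cosmetic point: your inequality $\langle y\rangle^{1-\gamma}\le \langle x\rangle^{1-\gamma}$ for $y\ge x$ is literally valid only for $x\ge 0$; on $x\in[-1,0]$ both $\langle x\rangle$ and $\mathcal{W}_+^1(x)$ are uniformly bounded, so the conclusion $\mathcal{W}_+^1(x)\lesssim\langle x\rangle^{1-\gamma}$ still holds trivially there.
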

	
	\begin{proof}
		We only prove the bound for $m_+$ since the one for $m_-$ follows identically.
		Recall that $m_+$ satisfies the integral equation \eqref{minteq}
		\begin{align}\label{m_+inteq}
			m_+=1+\int_x^\infty D_{\lambda}(y-x)V(y)m_+(y,\lambda)\,dy,
			\qquad D_{\lambda}(x)=\frac{e^{2i\lambda x}-1}{2i\lambda}.
		\end{align} 
		It suffices to bound $L^{2}$ norm of
		\[
		\int e^{i\lambda x}\left(\int_x^\infty D_{\lambda}(y-x)V(y)m_+(y,\lambda)\,dy\right)g(\lambda)\,d\lambda
		\]
		
		We split the integral into  $|\lambda|\leq1$ and $|\lambda|\geq1$.
		For the second region, using the boundedness of Jost function, see Lemma \ref{lem:Mestimates},
		$|D_\lambda|\lesssim |\lambda|^{-1}$, and Cauchy-Schwarz:
		\begin{align*}
			\left\Vert \mathrm{1}_{x\geq-1}\int_{|\lambda|\ge1}e^{i\lambda x}\left(\int_x^\infty D_{\lambda}(y-x)V(y)m_+(y,\lambda)\,dy\right)g(\lambda)\,d\lambda\right\Vert_{L^2_x}\\
			\lesssim \left\Vert \mathrm{1}_{x\geq-1}\int_{|\lambda|\ge1}\int_x^\infty \frac{1}{|\lambda|}\left|V(y)m_+(y,\lambda)\right|\,dy\left|g(\lambda)\right|\,d\lambda\right\Vert_{L^2_x}\\
			\lesssim \left\Vert \mathrm{1}_{x\geq-1}\int_x^\infty \left|V(y)\right|\,dy\right\Vert_{L^{2}_{x}} {\| g \|}_{L^2}
			\lesssim {\| g \|}_{L^2}.
		\end{align*}
		
		For the region $|\lambda|\leq1$, we use $|D_{\lambda}(z)| \lesssim|z|$
		and Cauchy-Schwarz, to obtain
		\begin{align*}
			\left\Vert \mathrm{1}_{x\geq-1}\int_{|\lambda|\leq1}e^{i\lambda x}
			\left(\int_x^\infty D_{\lambda}(y-x)V(y)m_+(y,\lambda)\,dy\right)g(\lambda)\,d\lambda\right\Vert_{L^2_x}
			\\
			\lesssim\left\Vert \mathrm{1}_{x\geq-1}\int_{|\lambda|\leq1}\int_x^\infty |y|\left|V(y)m_+(y,\lambda)\right|\,dy\left|g(\lambda)\right|\,d\lambda\right\Vert_{L^2_x}\\
			\lesssim\left\Vert \mathrm{1}_{x\geq-1}\int_x^\infty |y|\left|V(y)\right|\,dy\right\Vert_{L^{2}_{x}} {\| g \|}_{L^2}\lesssim{\| g \|}_{L^2}.
		\end{align*}
		having used the assumption $\jx^\gamma V \in L^1$ for $\gamma > 3/2$.
		
		To show \eqref{eq:mPDO+}, we can proceed as above and simply notice that, for $\gamma-\beta>\frac{3}{2}$
		\begin{align*}
			\left\Vert \mathrm{1}_{x\geq-1}\jx^{\beta}\int_{|\lambda|\ge1}e^{i\lambda x}\left(\int_x^\infty D_{\lambda}(y-x)V(y)m_+(y,\lambda)\,dy\right)g(\lambda)\,d\lambda\right\Vert_{L^2_x}\\
			\lesssim\left\Vert \mathrm{1}_{x\geq-1}\jx^{-\gamma+\beta}\int_x^\infty \jy^{\gamma}\left|V(y)\right|\,dy\right\Vert_{L^{2}_{x}} {\| g \|}_{L^2}
			\lesssim{\| g \|}_{L^2}
		\end{align*}
		and
		\begin{align*}
			\left\Vert \mathrm{1}_{x\geq-1}\jx^{\beta}\int_{|\lambda|\leq1}e^{i\lambda x}
			\left(\int_x^\infty D_{\lambda}(y-x)V(y)m_+(y,\lambda)\,dy\right)g(\lambda)\,d\lambda\right\Vert_{L^2_x}
			\\
			\lesssim\left\Vert \mathrm{1}_{x\geq-1}\jx^\beta\int_{|\lambda|\leq1}\int_x^\infty |y|\left|V(y)m_+(y,\lambda)\right|\,dy\left|g(\lambda)\right|\,d\lambda\right\Vert_{L^2_x}
			\\
			\lesssim\left\Vert \mathrm{1}_{x\geq-1}\jx^{-\gamma+\beta+1}\int_x^\infty \jy^{\gamma}\left|V(y)\right|\,dy\right\Vert_{L^{2}_{x}} {\| g \|}_{L^2}\lesssim{\| g \|}_{L^2}.
		\end{align*}
	\end{proof}

	By duality, we have the following:
	\begin{lem}\label{lem:pseeasy-1}
		Suppose $\jx^{\gamma}V\in L^1$ with $\gamma>3/2$. Then one has the following estimate
		\begin{equation}\label{eq:pseeasy-1}
			\left\Vert \int e^{i\lambda x}m_+(x,\lambda)\mathrm{1}_{x\geq-1}h(x)\,dx\right\Vert_{L_{\lambda}^{2}}
			\lesssim\left\Vert h\right\Vert_{L^2}.
		\end{equation}
		Similar estimates hold for $m_{-}$ restricted on $x\leq1$.
		
		For $\gamma>\beta+3/2$,
		\begin{equation}\label{eq:mPDO+-1}
			\left\Vert \int e^{i\lambda x}\left(m_+(x,\lambda)-1\right)\mathrm{1}_{x\geq-1}h(x)\,dx\right\Vert_{L_{\lambda}^{2}}
			\lesssim\left\Vert \jx^{-\beta}h\right\Vert_{L^2_x}.
		\end{equation}
		Similar estimates hold for $m_{-}-1$ restricted to $x\leq1$.
	\end{lem}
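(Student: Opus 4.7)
The plan is to derive this lemma directly from Lemma \ref{lem:pseeasy} by duality. Define the operator
\[
Sh(\lambda) := \int e^{i\lambda x}m_+(x,\lambda)\mathrm{1}_{x\geq-1}h(x)\,dx,
\]
so that the claim is that $S:L^2_x\to L^2_\lambda$ is bounded. A quick pairing computation shows that its $L^2$-adjoint is
\[
S^{*}g(x) = \mathrm{1}_{x\geq-1}\int e^{-i\lambda x}\overline{m_+(x,\lambda)}\,g(\lambda)\,d\lambda.
\]
Since $V$ is real-valued, the Volterra equation \eqref{minteq} forces the Jost function symmetry $\overline{m_+(x,\lambda)}=m_+(x,-\lambda)$. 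After changing variables $\mu=-\lambda$, we obtain
\[
S^{*}g(x) = \mathrm{1}_{x\geq-1}\int e^{i\mu x}m_+(x,\mu)\,g(-\mu)\,d\mu,
\]
which is precisely the operator from Lemma \ref{lem:pseeasy} applied to $\widetilde{g}(\mu):=g(-\mu)$. Since $\|\widetilde{g}\|_{L^2}=\|g\|_{L^2}$, Lemma \ref{lem:pseeasy} yields $\|S^{*}g\|_{L^2_x}\lesssim\|g\|_{L^2_\lambda}$, and duality gives $\|Sh\|_{L^2_\lambda}\lesssim\|h\|_{L^2_x}$, which is \eqref{eq:pseeasy-1}.

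For the weighted estimate \eqref{eq:mPDO+-1}, define analogously
\[
S_0 h(\lambda) := \int e^{i\lambda x}\bigl(m_+(x,\lambda)-1\bigr)\mathrm{1}_{x\geq-1}h(x)\,dx.
\]
The bound \eqref{eq:mPDO+} from Lemma \ref{lem:pseeasy} rewrites as $\|\jx^\beta S_0^{*}g\|_{L^2_x}\lesssim\|g\|_{L^2_\lambda}$ (after the same symmetry $\overline{m_+(x,\lambda)-1}=m_+(x,-\lambda)-1$ and change of variables). Equivalently, the operator $M_{\jx^\beta}\circ S_0^{*}:L^2_\lambda\to L^2_x$ is bounded, where $M_{\jx^\beta}$ denotes multiplication by $\jx^\beta$. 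Taking adjoints, $S_0\circ M_{\jx^\beta}:L^2_x\to L^2_\lambda$ is bounded, i.e.\ $\|S_0(\jx^\beta h)\|_{L^2_\lambda}\lesssim\|h\|_{L^2_x}$ for all $h$. Substituting $h\mapsto \jx^{-\beta}h$ gives exactly $\|S_0 h\|_{L^2_\lambda}\lesssim\|\jx^{-\beta}h\|_{L^2_x}$, which is \eqref{eq:mPDO+-1}. The analogous statements for $m_-$ follow from the mirror version of Lemma \ref{lem:pseeasy} in the same way.

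There is no real obstacle here: the computation is a one-line adjoint pairing, and the only subtlety is confirming the reality/symmetry identity $\overline{m_\pm(x,\lambda)}=m_\pm(x,-\lambda)$, which is immediate from the Volterra equation \eqref{minteq} with $V$ real. All the analytical work has already been done in Lemma \ref{lem:pseeasy}.
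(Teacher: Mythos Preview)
Your proof is correct and is exactly the duality argument the paper indicates (the paper simply writes ``By duality, we have the following'' before stating the lemma, without spelling out the adjoint computation). You have filled in the details—identifying $S^*$, using the symmetry $\overline{m_+(x,\lambda)}=m_+(x,-\lambda)$ for real $V$, and handling the weight by conjugating with $M_{\jx^\beta}$—precisely as intended.
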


	We also need estimates for pseudo-differential operators whose symbols involve of derivatives of $m_{\pm}$.
	
	\begin{lem}\label{lem:psehard1}
		Suppose $\jx^{\gamma}V\in L^1$ with $\gamma>\max(\beta/2+3/4,\beta)$.
		Then
		\begin{equation}\label{eq:psehard}
			\left\Vert \jx^{\beta} \mathrm{1}_{x\geq-1}\int e^{i\lambda x}\partial_xm_+(x,\lambda)g(\lambda)\,d\lambda\right\Vert_{L^2_x}\lesssim{\| g \|}_{L^2}.
		\end{equation}
		Similar estimates hold for $\partial_xm_{-}$ by restricting to $x\leq1$.
	\end{lem}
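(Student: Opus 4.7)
The plan is to derive an explicit representation for $\partial_x m_+$ from the Volterra equation, then split the resulting integral into two pieces handled by different methods. Differentiating \eqref{m_+inteq} with respect to $x$, using $D_\lambda(0)=0$ and $\partial_x D_\lambda(y-x)=-e^{2i\lambda(y-x)}$, I obtain
\[
\partial_x m_+(x,\lambda) = -\int_x^\infty e^{2i\lambda(y-x)}V(y)m_+(y,\lambda)\,dy.
\]
Substituting into the target integral, swapping the order of integration, and splitting $m_+(y,\lambda)=1+(m_+(y,\lambda)-1)$ gives $I(x) := \int e^{i\lambda x}\partial_x m_+(x,\lambda)g(\lambda)\,d\lambda = I_1(x) + I_2(x)$ with
\[
I_1(x) = -\int_x^\infty V(y)\,\check{g}(2y-x)\,dy, \qquad \check{g}(u) := \int e^{i\lambda u}g(\lambda)\,d\lambda,
\]
\[
I_2(x) = -\int_x^\infty V(y)\int e^{i\lambda(2y-x)}(m_+(y,\lambda)-1)g(\lambda)\,d\lambda\,dy.
\]

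For $I_1$, I would apply Minkowski's inequality in $y$, use $\jx \lesssim \jy$ on the region $-1 \le x \le y$, change variables $w = 2y-x$, and invoke Plancherel in $w$ to obtain
\[
\|\jx^\beta \mathbf{1}_{x \ge -1}I_1\|_{L^2_x} \lesssim \|\jy^\beta V\|_{L^1_y}\|g\|_{L^2},
\]
which is finite as long as $\gamma \ge \beta$.

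For $I_2$, I would view the map $g \mapsto I_2$ as an integral operator with kernel
\[
K_2(x,\lambda) = -\int_x^\infty V(y)\,e^{i\lambda(2y-x)}(m_+(y,\lambda)-1)\,dy
\]
and establish the Hilbert--Schmidt bound $\int_{-1}^\infty \jx^{2\beta}\int |K_2(x,\lambda)|^2\,d\lambda\,dx < \infty$. Using \eqref{Mestimates1} with $s=0$ gives $|m_+(y,\lambda)-1| \lesssim \jk^{-1}\mathcal{W}_{+}^{1}(y)$; since $\mathcal{W}_{+}^{1}$ is monotone decreasing, for $x\ge 0$ the weighted $L^1$ assumption on $V$ yields
\[
|K_2(x,\lambda)| \lesssim \frac{1}{\jk}\,\mathcal{W}_{+}^{1}(x)\int_x^\infty|V(y)|\,dy \lesssim \frac{\jx^{1-2\gamma}}{\jk},
\]
while for $x \in [-1,0]$ one has $|K_2(x,\lambda)|\lesssim \jk^{-1}$. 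Since $\jk^{-2}$ is integrable in $\lambda$, the Hilbert--Schmidt norm reduces to $\int_0^\infty \jx^{2\beta + 2 - 4\gamma}\,dx$, which is finite precisely when $\gamma > \beta/2 + 3/4$. Combined with the constraint $\gamma \ge \beta$ from $I_1$, this matches the hypothesis $\gamma > \max(\beta/2 + 3/4, \beta)$.

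The main obstacle is obtaining the sharp weighted exponent on $I_2$: a naive Cauchy--Schwarz in $y$ combined with Plancherel in $x$ only yields the weaker condition $\gamma > \beta/2 + 1/2$, so one must pass to the Hilbert--Schmidt viewpoint to simultaneously exploit the $\jk^{-1}$ decay produced by Lemma~\ref{lem:Mestimates} and the two spatial decay factors $\jx^{-\gamma}$ coming from $\int_x^\infty|V|$ and from $\mathcal{W}_{+}^{1}(x)$, whose product yields the precise exponent $\beta/2 + 3/4$ in the statement.
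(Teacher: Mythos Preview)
Your proof is correct and follows essentially the same approach as the paper: the same integral formula for $\partial_x m_+$, the same splitting $m_+=1+(m_+-1)$, Young/Minkowski plus Plancherel for the ``$1$'' piece (giving $\gamma\ge\beta$), and the same pointwise kernel bound $\jk^{-1}\jx^{1-2\gamma}$ for the ``$m_+-1$'' piece (giving $\gamma>\beta/2+3/4$). The only cosmetic difference is that you package the second estimate as a Hilbert--Schmidt bound, whereas the paper arrives at the identical numerology by bounding the $x$-integrand pointwise and applying Cauchy--Schwarz in $\lambda$; the computations are the same.
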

	
	\begin{proof}
		From \eqref{m_+inteq} we can write
		\begin{align}\label{eq:split}
			\begin{split}
				\partial_xm_+(x,\lambda) & =\int_x^\infty e^{2i\lambda(y-x)}V(y)m_+(y,\lambda)\,dy
				\\
				& = \int_x^\infty e^{2i\lambda(y-x)}V(y)\left(m_+(y,\lambda)-1\right)\,dy
				+\int_x^\infty e^{2i\lambda(y-x)}V(y)\,dy.
			\end{split}
		\end{align}
		From the estimate of the Jost functions, see Lemma \ref{lem:Mestimates}, we know that
		\[ |m_+(y,\lambda)-1| \lesssim \frac{1}{\langle\lambda\rangle}\mathcal{W}_{+}^{1}(y), \quad y\geq-1,\]
		see the definition \ref{defW_+-}.
		Then
		\begin{align*}
			\left\Vert \mathrm{1}_{x\geq-1} \jx^{\beta} \int
			e^{i\lambda x}\left(\int_x^\infty e^{2i\lambda(y-x)}V(y)
			\left(m_+(y,\lambda)-1\right)\,dy\right)g(\lambda)\,d\lambda\right\Vert_{L^2_x}
			\\
			\lesssim \left\Vert \mathrm{1}_{x\ge-1} \jx^{\beta} \int_x^\infty \left|V(y)
			\right|\mathcal{W}_{+}^{1}(y)\,dy
			\, \Big( \int \frac{1}{\langle\lambda\rangle} \left|g(\lambda)\right|\,d\lambda \Big) \, \right\Vert_{L^2_x}
			\\
			\lesssim\left\Vert \mathrm{1}_{x\ge-1}\jx^{\beta} \int_x^\infty |V(y)| \, 
			\mathcal{W}_{+}^{1}(y)\,dy\right\Vert_{L^2_x} {\| g \|}_{L_{\lambda}^{2}}
		\end{align*}
		where in the last inequality we applied Cauchy-Schwarz in $\lambda$.
		Note that if $\jx^{\gamma}V\in L^1$ with $\gamma\geq1$
		then, for $y \geq -1$ we have $|\mathcal{W}_{+}^{1}(y)| \lesssim {\jy}^{1-\gamma} {\| \jx^{\gamma}V \|}_{L^1}$
		so that
		\begin{align*}
			\left\Vert \mathrm{1}_{x\geq-1} \jx^{\beta}\int_x^\infty |V(y)| \, \mathcal{W}_{+}^{1}(y)\,dy \right\Vert_{L^2_x}
			\lesssim \left\Vert \jx^{\beta-2\gamma+1} \int \jy^\gamma |V(y)| \,dy \right\Vert_{L^2_x}
			\left\Vert \jx^{\gamma}V\right\Vert_{L^1}.
		\end{align*}
		This is finite if $2\gamma>\beta +3/2$, which is the case under our assumptions.
		
		
		
		
		It remains to bound the integral involving the last term in \eqref{eq:split}. 
		Exchanging the order of integration we notice that
		\begin{align*}
			\int e^{i\lambda x}\left(\int_x^\infty e^{2i\lambda(y-x)}V(y)\,dy\right)g(\lambda)\,d\lambda
			= \sqrt{2\pi} \int_x^\infty V(y) \, \whF^{-1}(g)\left(2y-x\right)\,dy.
		\end{align*}
		It follows from the Young inequality that
		\begin{align*}
			{\left\| \jx^\beta \int e^{i\lambda x}\left(\int_x^\infty e^{2i\lambda(y-x)}V(y)\,dy\right)g(\lambda)\,d\lambda \right\|}_{L^2_x}
			\\ 
			\lesssim {\left\| \int_{-\infty}^\infty \jy^\beta |V(y)| \, \big| \whF^{-1}(g)(2y-x) \big| \,dy \right\|}_{L^2_x}
			\\
			\lesssim {\| \jx^\beta V \|}_{L^1} {\| g \|}_{L^2}.
		\end{align*}
		
		
	\end{proof}

	By duality we can obtain the following:
	\begin{lem}\label{lem:psehard2} 
		Suppose $\jx^{\gamma}V\in L^1$ with $\gamma>\max(\beta/2+3/4,\beta)$. 
		Then
		\begin{equation}\label{eq:mPDOhard+2}
			\left\Vert \int e^{i\lambda x}\partial_xm_+(x,\lambda)\mathrm{1}_{x\geq-1}h(x)\,dx\right\Vert_{L_{\lambda}^{2}}
			\lesssim\left\Vert \jx^{-\beta}h\right\Vert_{L^2}.
		\end{equation}
		Similar estimates hold for $\partial_xm_{-}$ restricted on $x\leq1$.
	\end{lem}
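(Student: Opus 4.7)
The plan is to deduce this estimate from Lemma \ref{lem:psehard1} by a duality argument, exactly in parallel with how Lemma \ref{lem:pseeasy-1} was derived from Lemma \ref{lem:pseeasy}. Let me denote the operator in question by
\[
(T^\ast h)(\lambda) := \int e^{i\lambda x}\partial_{x} m_{+}(x,\lambda)\,\mathrm{1}_{x\geq-1}h(x)\,dx.
\]
First I would pair $T^\ast h$ against a test function $g\in L^{2}_\lambda$ and use Fubini to rewrite
\[
\langle T^\ast h, g\rangle_{L^{2}_\lambda}
= \int \mathrm{1}_{x\geq-1}h(x)\,\overline{\left(\int e^{-i\lambda x}\,\overline{\partial_{x} m_{+}(x,\lambda)}\,\overline{g(\lambda)}\,d\lambda\right)}\,dx.
\]
The inner integral is almost of the form appearing in Lemma \ref{lem:psehard1}, except for complex conjugates and a sign in the exponential. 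To reduce to the exact form, I would invoke the symmetry $\overline{m_{+}(x,\lambda)}=m_{+}(x,-\lambda)$ for real $\lambda$ (which follows from the fact that the Volterra equation \eqref{minteq} has real coefficients once $\lambda^{2}$ is real), hence $\overline{\partial_{x} m_{+}(x,\lambda)}=\partial_{x} m_{+}(x,-\lambda)$, and then perform the change of variables $\lambda\mapsto-\lambda$ to rewrite the inner integral as
\[
\int e^{i\lambda x}\partial_{x} m_{+}(x,\lambda)\,\tilde g(\lambda)\,d\lambda,\qquad \tilde g(\lambda):=\overline{g(-\lambda)},\qquad \|\tilde g\|_{L^{2}}=\|g\|_{L^{2}}.
\]

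Once this reduction is in place, I would apply Cauchy--Schwarz in $x$ with the weight $\jx^{\beta}\mathrm{1}_{x\geq-1}$ split between the two factors:
\[
\bigl|\langle T^\ast h, g\rangle\bigr|
\leq \bigl\|\jx^{-\beta}h\bigr\|_{L^{2}_x}\,
\Big\|\jx^{\beta}\mathrm{1}_{x\geq-1}\int e^{i\lambda x}\partial_{x} m_{+}(x,\lambda)\,\tilde g(\lambda)\,d\lambda\Big\|_{L^{2}_x}.
\]
The second factor is precisely the quantity bounded in Lemma \ref{lem:psehard1} (under the same hypothesis $\gamma>\max(\beta/2+3/4,\beta)$), so it is controlled by $\|\tilde g\|_{L^{2}}=\|g\|_{L^{2}}$. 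Taking the supremum over $\|g\|_{L^{2}}\leq 1$ yields \eqref{eq:mPDOhard+2}. The analogous assertion for $\partial_{x}m_{-}$ restricted to $x\leq 1$ is handled by the same argument after obvious sign changes.

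The argument is essentially bookkeeping, so I do not anticipate any substantive obstacle; the only mildly delicate point is keeping track of the complex conjugates and the parity in $\lambda$ when identifying the adjoint kernel, which is why the Schwarz-reflection identity for $m_{+}$ plays a role. All analytic input comes from Lemma \ref{lem:psehard1}, whose hypothesis on $\gamma$ is inherited verbatim.
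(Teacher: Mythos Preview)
Your proof is correct and is exactly the duality argument the paper invokes (the paper simply states ``By duality we can obtain the following'' before the lemma). One small remark: the Schwarz-reflection/variable-change step is unnecessary, since after Fubini the inner $\lambda$-integral is already $\int e^{i\lambda x}\partial_x m_+(x,\lambda)\,\overline{g(\lambda)}\,d\lambda$, to which Lemma~\ref{lem:psehard1} applies directly with $\overline{g}$ in place of $g$ (your displayed Fubini identity has a stray conjugate on $g$, but this is harmless for the bound).
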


	A result similar to Lemmas \ref{lem:psehard1} 
	can be obtained for the operator with the symbol $\partial_{\lambda}\partial_xm_+(x,\lambda)$
	More precisely we have
	
	\begin{lem}\label{lem:psehardk1}
		Suppose $\jx^{\gamma}V\in L^1$ with $\gamma>\max(\beta/2+3/4,\beta)+1$.
		Then
		\begin{equation}\label{eq:psehard-1}
			\left\Vert \jx^\beta
			\mathrm{1}_{x\geq-1}\int e^{i\lambda x}\partial_{\lambda}\partial_xm_+(x,\lambda)g(\lambda)\,d\lambda\right\Vert_{L^2_x}
			\lesssim{\| g \|}_{L^2}
		\end{equation}
		Similar estimates hold for $\partial_{\lambda}\partial_xm_{-}$ restricted on $x\leq1$.
	\end{lem}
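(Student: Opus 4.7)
The plan is to mimic the proof of Lemma \ref{lem:psehard1}, starting from the explicit integral representation of $\partial_xm_+$ and differentiating once in $\lambda$. From \eqref{eq:split} we have
\[
\partial_xm_+(x,\lambda)=\int_x^\infty e^{2i\lambda(y-x)}V(y)m_+(y,\lambda)\,dy,
\]
so differentiating in $\lambda$ and splitting $m_+ = 1 + (m_+-1)$ in the term that carries the $(y-x)$ factor yields three pieces:
\[
\partial_\lambda\partial_xm_+(x,\lambda) = I_1(x,\lambda)+I_2(x,\lambda)+I_3(x,\lambda),
\]
with
\[
I_1 = \int_x^\infty 2i(y-x)e^{2i\lambda(y-x)}V(y)\,dy,\quad
I_2 = \int_x^\infty 2i(y-x)e^{2i\lambda(y-x)}V(y)\bigl(m_+(y,\lambda)-1\bigr)dy,
\]
\[
I_3 = \int_x^\infty e^{2i\lambda(y-x)}V(y)\,\partial_\lambda m_+(y,\lambda)\,dy.
\]

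The first observation I would use repeatedly is that on the set $\{x\geq -1,\ y\geq x\}$ one has $|y-x|\lesssim \jy$ and $\jx\lesssim\jy$, so the weight $\jx^\beta$ outside can be absorbed into $\jy^\beta$ inside the $y$-integral.

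For the contribution of $I_1$ to the integral against $e^{i\lambda x}g(\lambda)$, I swap the order of integration and recognize
\[
\int e^{i\lambda x} I_1(x,\lambda) g(\lambda)\,d\lambda = \sqrt{2\pi}\int_x^\infty 2i(y-x)V(y)\,\whF^{-1}(g)(2y-x)\,dy.
\]
As in the last step of the proof of Lemma \ref{lem:psehard1}, Minkowski's/Young's inequality plus a translation $L^2$-invariance give a bound by $\|\jx^{\beta+1}V\|_{L^1}\|g\|_{L^2}$, which is finite under $\gamma>\beta+1$.

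For $I_2$ and $I_3$ I use the bounds from Lemma \ref{lem:Mestimates}, namely $|m_+(y,\lambda)-1|\lesssim \jk^{-1}\mathcal{W}_+^{1}(y)$ and $|\partial_\lambda m_+(y,\lambda)|\lesssim \jk^{-1}\mathcal{W}_+^{2}(y)$ for $y\geq-1$, then apply Cauchy--Schwarz in $\lambda$ (exploiting $\jk^{-1}\in L^2$) to extract a $\|g\|_{L^2}$ factor. What remains is the $L^2_x$-norm of terms like
\[
\jx^\beta\, 1_{x\ge-1}\int_x^\infty \jy\, |V(y)|\,\mathcal{W}_+^{1}(y)\,dy \quad\text{and}\quad \jx^\beta\, 1_{x\ge-1}\int_x^\infty |V(y)|\,\mathcal{W}_+^{2}(y)\,dy,
\]
which, using $\mathcal{W}_+^{s}(y)\lesssim \jy^{s-\gamma}$ and $\jx\lesssim\jy$, are dominated by $\jx^{2+\beta-2\gamma}$ for $x\geq 0$. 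This is in $L^2_x$ provided $2(2+\beta-2\gamma)<-1$, i.e.\ $\gamma>\beta/2+5/4$, which is implied by the hypothesis $\gamma>\beta/2+7/4$.

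The main obstacle compared with Lemma \ref{lem:psehard1} is precisely the extra factor $(y-x)$ produced by $\partial_\lambda$; it costs one more power of decay of $V$, which is exactly why the hypothesis of the present lemma strengthens that of Lemma \ref{lem:psehard1} by adding $+1$ to $\gamma$. Once this factor is controlled via $|y-x|\lesssim\jy$ on the relevant region, the rest of the argument is a direct transcription of the earlier proof.
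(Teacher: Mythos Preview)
Your proof is correct and follows essentially the same approach as the paper's. The paper's argument is terser: it writes down the identity \eqref{dldxm} (your $I_3$ plus the unsplit $I_1+I_2$) and then says each piece is handled exactly as in Lemma~\ref{lem:psehard1}, with $\partial_\lambda m_+$ playing the role of $m_+-1$ in the first term and the extra $(y-x)$ factor absorbed by the additional weight on $V$ in the second; your more explicit splitting $m_+=1+(m_+-1)$ in the second term and the subsequent bookkeeping of exponents just spells out what the paper leaves implicit.
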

	
	\begin{proof}
		The proof of \eqref{eq:psehard-1} can be done in the same way as the proof of \eqref{eq:psehard}.
		We first use the identity
		\begin{align}\label{dldxm}
			\begin{split}
				\partial_{\lambda}\partial_xm_+(x,k) & 
				=\int_x^\infty e^{2i\lambda(y-x)}V(y)\partial_{\lambda}m_+(y,\lambda)\,dy\\
				& +\int_x^\infty 2i(y-x)e^{2i\lambda(y-x)}V(y)m_+(y,\lambda)\,dy,
			\end{split}
		\end{align}
		see \eqref{eq:split}. We can then estimate the first term on the right-hand side of \eqref{dldxm}
		as in Lemma \ref{lem:psehard1} replacing  $m_+-1$ with $\partial_\lambda m_+$ and using \eqref{Mestimates1}
		(which now requires one more weight).
		The last term in \eqref{dldxm} is estimated similarly, absorbing the extra weights thanks to the 
		restriction $\gamma>\max(3/2,\beta+1)$.
	\end{proof}

	To deal with the localized decay, we also need to estimate PDOs with $\partial_\lambda m_+$ as the symbol.
	
	\begin{lem}\label{lem:m_k}
		Suppose $\jx^{\gamma}V\in L^1$ with $\gamma \geq 2$.
		If $\gamma>\beta + 5/2$, then
		\begin{equation}\label{eq:pdomk}
			\left\Vert \mathrm{1}_{x\geq-1} \jx^\beta 
			\int e^{i\lambda x}\partial_{\lambda}m_+(x,\lambda)g(\lambda)\,d\lambda\right\Vert_{L^2_x}\lesssim{\| g \|}_{L^2}.
		\end{equation}
		Similar estimates hold for $\partial_{\lambda}m_{-}$ restricted onto $x\leq1$.
		
		By duality
		\begin{equation}\label{eq:pdomkd}
			\left\Vert \int e^{i\lambda x}\partial_{\lambda}m_+(x,\lambda)\mathrm{1}_{x\geq-1}h(x)\,dx\right\Vert_{L^2_\lambda}
			\lesssim\left\Vert \jx^{-\beta}h\right\Vert_{L^2_x}.
		\end{equation}
		Similar estimates hold for $\partial_{\lambda}m_{-}$ restricted onto $x\leq1$.
		
	\end{lem}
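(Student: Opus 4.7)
The plan is to follow the same strategy that underlies Lemma \ref{lem:pseeasy}, but in a much simpler guise: the pointwise bound for $\partial_\lambda m_+$ from Lemma \ref{lem:Mestimates} already carries $\langle\lambda\rangle^{-1}$ decay in frequency, so one can avoid the full decomposition into a ``main'' and ``Volterra'' piece that was needed for Lemmas \ref{lem:psehard1}--\ref{lem:psehardk1}, and instead get everything from a direct Cauchy--Schwarz argument.

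First, applying \eqref{Mestimates1} with $s=1$ gives the pointwise estimate
\[
|\partial_\lambda m_+(x,\lambda)| \lesssim \frac{\mathcal{W}_+^2(x)}{\langle \lambda\rangle}, \qquad x\geq -1.
\]
Substituting this into the integral on the left of \eqref{eq:pdomk} and applying Cauchy--Schwarz in $\lambda$ (using that $\langle\lambda\rangle^{-1}\in L^2_\lambda$) yields
\[
\left|\int e^{i\lambda x}\partial_\lambda m_+(x,\lambda)g(\lambda)\,d\lambda\right|
 \lesssim \mathcal{W}_+^2(x)\int \frac{|g(\lambda)|}{\langle\lambda\rangle}\,d\lambda
 \lesssim \mathcal{W}_+^2(x)\,\|g\|_{L^2_\lambda}.
\]
Therefore bounding the $L^2_x$ norm in \eqref{eq:pdomk} reduces to verifying that $\jx^\beta \mathrm{1}_{x\geq -1}\mathcal{W}_+^2(x)\in L^2_x$.

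For the tail $x\geq 1$, using $\jy \geq \jx$ for $y\geq x$ together with $\gamma\geq 2$,
\[
\mathcal{W}_+^2(x)=\int_x^\infty \jy^{2}|V(y)|\,dy
 \leq \jx^{2-\gamma}\int_x^\infty \jy^{\gamma}|V(y)|\,dy
 \lesssim \jx^{2-\gamma}\,\|\jx^\gamma V\|_{L^1},
\]
so $\jx^\beta \mathcal{W}_+^2(x) \lesssim \jx^{\beta+2-\gamma}$, which is square-integrable on $\{x\geq 1\}$ exactly when $\gamma>\beta+5/2$. The compact region $-1\leq x\leq 1$ contributes only an $O(1)$ piece using the uniform bound $\mathcal{W}_+^2(x)\leq \|\jx^2 V\|_{L^1}$. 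This completes the proof of \eqref{eq:pdomk}, and the corresponding estimate for $\partial_\lambda m_-$ on $x\leq 1$ is identical after reflection.

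The dual statement \eqref{eq:pdomkd} follows by the standard duality observation: setting $h=\jx^\beta h'$, the bound \eqref{eq:pdomkd} is equivalent to the $L^2_x\to L^2_\lambda$ boundedness of $h'\mapsto \int e^{i\lambda x}\partial_\lambda m_+(x,\lambda)\mathrm{1}_{x\geq -1}\jx^\beta h'(x)\,dx$. Taking complex conjugates inside the integral exhibits this operator as the adjoint (up to conjugation of inputs) of the one appearing in \eqref{eq:pdomk}, and the two operator norms coincide. I do not anticipate any real obstacle here: the only delicate point is the sharp weight accounting, which is precisely how the threshold $\gamma>\beta+5/2$ arises in the hypothesis.
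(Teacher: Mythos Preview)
Your proposal is correct and follows essentially the same approach as the paper: use the pointwise bound $|\partial_\lambda m_+(x,\lambda)|\lesssim \langle\lambda\rangle^{-1}\mathcal{W}_+^2(x)$ from Lemma~\ref{lem:Mestimates}, apply Cauchy--Schwarz in $\lambda$, and then check that $\jx^\beta\mathcal{W}_+^2(x)\in L^2(x\geq -1)$ via the decay $\mathcal{W}_+^2(x)\lesssim\jx^{2-\gamma}$. The paper's version is slightly terser (it does not explicitly split off the compact region $-1\le x\le 1$), but the argument is the same.
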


	\begin{proof}
		Using the estimate for the Jost function from Lemma \ref{lem:Mestimates} we have,
		for $x\geq-1$, 
		\begin{align}
			\left|\partial_{\lambda}m_+(x,\lambda)\right| \lesssim \frac{\mathcal{W}_{+}^{2}(x)}{\langle\lambda\rangle}
			\lesssim  \frac{1}{\langle\lambda\rangle} \jx^{-\gamma+2} {\| \jx^\gamma V\|}_{L^1}.
		\end{align}
		Then we apply Cauchy-Schwarz in $\lambda$ 
		to obtain that, for fixed $x\geq-1$,
		\[
		\left|\jx^\beta \int e^{i\lambda x}\partial_{\lambda}m_+(x,\lambda)g(\lambda)\,d\lambda\right|
		\lesssim \jx^{\beta-\gamma+2}{\| g \|}_{L^2}.
		\]
		and taking the $L^2_x$ norm gives \eqref{eq:pdomk}.
	\end{proof}

	Finally, we record an additional lemma which is useful for the localized $L^2$
	estimates for the derivative of the Schr\"odinger flow, see Lemma \ref{lem:localEn}.
	
	\begin{lem}\label{lem:PDOmxkk}
		For $\jx^{\gamma}V\in L^1$ with $\gamma > \beta + 5/2$, we have
		\begin{equation}\label{eq:mPDOhard+31}
			\left\Vert \jx^{\beta}\mathrm{1}_{x\geq-1}\int_{|\lambda|\lesssim1}e^{i\lambda x}
			\partial_{\lambda}^2\partial_xm_+(x,\lambda)g(\lambda)\,d\lambda\right\Vert_{L^2_x}
			\lesssim{\| g \|}_{L^2}
		\end{equation}
		Similar estimates hold for $\partial_\lambda^2\partial_xm_{-}$ restricted
		on $x\leq1$.
		
	\end{lem}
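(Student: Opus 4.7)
The plan is to imitate the strategy of Lemmas \ref{lem:psehard1} and \ref{lem:psehardk1}: use the integral equation \eqref{m_+inteq} satisfied by $m_+$, differentiate once in $x$ to obtain the representation \eqref{eq:split}, then differentiate that twice in $\lambda$ under the integral sign, and finally estimate each resulting piece using the pointwise bounds of Lemma \ref{lem:Mestimates} together with the fact that $|\lambda|\lesssim 1$ is bounded so that Cauchy--Schwarz in $\lambda$ gives a pure $L^2_\lambda$ bound for $g$.

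Concretely, starting from $\partial_x m_+(x,\lambda)=\int_x^\infty e^{2i\lambda(y-x)}V(y)m_+(y,\lambda)\,dy$, two differentiations in $\lambda$ produce
\begin{align*}
\partial_\lambda^2\partial_x m_+(x,\lambda) = \int_x^\infty e^{2i\lambda(y-x)}V(y)\Bigl[-4(y-x)^2 m_+(y,\lambda) + 4i(y-x)\partial_\lambda m_+(y,\lambda) + \partial_\lambda^2 m_+(y,\lambda)\Bigr]dy.
\end{align*}
For $x\ge -1$ we have $|y-x|\lesssim \jy$, and by Lemma \ref{lem:Mestimates} (at low frequency we simply drop the $\jk^{-1}$ factor) the three terms inside the bracket are bounded by $\jy^2$, $\jy\,\mathcal{W}_+^2(y)$, and $\mathcal{W}_+^3(y)$ respectively. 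Using $\jy^\sigma\lesssim \jy^{\sigma-\gamma}\cdot\jy^\gamma$ for $y\ge x\ge -1$, each of $\int_x^\infty |V(y)|\jy^2\,dy$, $\int_x^\infty |V(y)|\jy\,\mathcal{W}_+^2(y)\,dy$ and $\int_x^\infty |V(y)|\mathcal{W}_+^3(y)\,dy$ is bounded by $\jx^{2-\gamma}\|\jx^\gamma V\|_{L^1}$ (the first being the dominant one).

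Cauchy--Schwarz in $\lambda$ over $\{|\lambda|\lesssim 1\}$ then yields, pointwise in $x\ge -1$,
\begin{align*}
\Bigl| \int_{|\lambda|\lesssim 1} e^{i\lambda x}\partial_\lambda^2\partial_x m_+(x,\lambda)\,g(\lambda)\,d\lambda \Bigr| \lesssim \jx^{2-\gamma}\,{\|g\|}_{L^2_\lambda}.
\end{align*}
Taking the $L^2_x$ norm of $\jx^\beta$ times the right-hand side requires $\jx^{\beta+2-\gamma}\in L^2_x(x\ge -1)$, which holds precisely when $2(\gamma-\beta-2)>1$, i.e.\ $\gamma>\beta+5/2$, matching the hypothesis. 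The estimate for $m_-$ follows by the analogous argument with the roles of $\pm\infty$ reversed.

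The only mildly delicate point is that $(y-x)^2$ grows quadratically, which is the term driving the sharp constraint $\gamma>\beta+5/2$; this is why the restriction to $|\lambda|\lesssim 1$ is essential, since there one cannot trade the polynomial loss in $y-x$ for an integration by parts in $\lambda$ as one would at high frequency. All other contributions are strictly better behaved, and no Young-type rearrangement (as in the last part of Lemma \ref{lem:psehard1}) is needed here because the $V(y)$ factor is still present in every term after differentiation.
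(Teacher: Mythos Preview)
Your argument is correct and follows essentially the same route as the paper's proof: obtain the pointwise bound $|\partial_\lambda^2\partial_x m_+(x,\lambda)|\lesssim \jx^{2-\gamma}$ for $x\ge -1$, then apply Cauchy--Schwarz over the bounded $\lambda$-region. The only difference is that the paper simply quotes this pointwise bound from \eqref{Mestimates2} in Lemma~\ref{lem:Mestimates} (with $s=2$, giving $|\partial_\lambda^2\partial_x m_+|\lesssim \mathcal{W}_+^2(x)\lesssim \jx^{2-\gamma}$), whereas you re-derive it from the integral equation; your three-term decomposition is precisely what underlies that lemma, so the extra work is redundant but not wrong.
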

	
	\begin{proof}
		The proof is similar to those above. It suffices notice that, by Lemma \ref{lem:Mestimates}, for $x\geq-1$, 
		\[
		\left|\partial_{\lambda}^2\partial_xm_+(x,\lambda)\right|\lesssim\mathcal{W}_{+}^{2}(x)
		\lesssim \jx^{-\gamma+2} {\|\jy^{\gamma}V \|}_{L^1}.
		\]
		We can then apply Cauchy-Schwarz (here $|\lambda| \lesssim 1$) and obtain \eqref{eq:mPDOhard+31} 
		since $\gamma > \beta + 5/2$.
	\end{proof}

	\medskip
	\subsection{Distorted Fourier transform}\label{ssecDFT}
	We recall some basic properties of the distorted Fourier transform
	with respect to the perturbed Schr\"odinger operator.
	First, recall that the standard Fourier transform is defined, for $\phi\in L^2$, as
	\[
	\mathcal{F}\left[\phi\right](\lambda):=\hat{\phi}(\lambda)=\frac{1}{\sqrt{2\pi}}\int e^{-i\lambda x}\phi(x)\,dx
	\]
	with its inverse as
	\[
	\mathcal{F}^{-1}\left[\phi\right](x):=\frac{1}{\sqrt{2\pi}}\int e^{i\lambda x}\phi(\lambda)\,d\lambda.
	\]
	Given the Jost functions $\psi_{\pm}$ from \eqref{psipm}, we set
	\begin{align}\label{matK}
		\mathcal{K}(x,\lambda):=\frac{1}{\sqrt{2\pi}}
		\begin{cases}
			T(\lambda) \psi_+(x,\lambda) & \lambda\geq0
			\\
			T(-\lambda) \psi_-(x,-\lambda) & \lambda<0
		\end{cases},
	\end{align}
	and define the ``distorted Fourier transform'' for $f\in \mathcal{S}$ by
	\begin{align}\label{tildeF}
		\wtF\left[\phi\right](\lambda) = \widetilde{f}(\lambda) := \int \overline{\mathcal{K}(x,\lambda)} f(x)\,dx.
	\end{align}

	\begin{lem}\label{lemtildeF}
		In our setting, one has
		\[
		{\big\| \wtF\left[f\right] \big\|}_{L^{2}}=\left\Vert f\right\Vert _{L^{2}},\,\,\forall f\in L^{2}
		\]
		and
		\begin{align}\label{tildeF-1}
			\wtF^{-1}\left[\phi\right](x)=\int\mathcal{K}(x,\lambda)\phi(\lambda)\,d\lambda.
		\end{align}
		Also, if $D:=\sqrt{-\partial_{xx}+V}$, 
		\begin{align}
			m(D)=\wtF^{-1}m(\lambda)\wtF.
		\end{align}
		so that in particular
		$\left(-\partial_{xx}+V\right)=\wtF^{-1}\lambda^{2}\wtF$.
		
		\medskip
		We also have the following properties:
		
		\setlength{\leftmargini}{2em}
		\begin{itemize}
			
			\item[(i)] If $\phi\in L^1$, then $\widetilde{\phi}$ is a continuous, bounded function. 
			
			\medskip
			\item[(ii)] If the potential $V$ is generic $\tilde{\phi}\left(0\right)=0$.
			
			\medskip
			\item[(iii)] If the potential $V$ is non-generic with $\psi_+(x,0)$ even  (resp. odd),
			then $\tilde{\phi}\left(0\right)=0$ if $\phi$ is odd  (resp. even).
			
			\medskip
			\item[(iv)] There exists $C>0$ such that one has
			\begin{equation*}
				\left\Vert \lambda\widetilde{u}\right\Vert _{L^{2}}
				\leq C\left(1+\left(\left\Vert V\right\Vert_{L^1}\right)^{\frac{1}{2}}\right)\left\Vert u\right\Vert _{H^{1}},
			\end{equation*}
			and
			\begin{align}\label{eq:weiF}
				\left\Vert \partial_{\lambda}\widetilde{u}\right\Vert _{L^{2}}\leq C\left\Vert \jx u\right\Vert _{L^{2}}.
			\end{align}
		\end{itemize}
		
	\end{lem}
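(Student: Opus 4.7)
The plan is to dispatch the four items in the order they appear, treating the $L^{2}$ isometry, the inversion formula, and the functional calculus as a single block that follows from classical spectral theory, and then focusing the effort on (iv), which is the only statement that actually requires the pseudo-differential bounds developed earlier in the section.

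First, I would note that under the standing assumption \eqref{noboundstates} together with $V \in L^{1}$, $H = -\partial_{xx} + V$ has purely absolutely continuous spectrum $[0,\infty)$, and the generalized eigenfunctions $\psi_{\pm}(x,\lambda)$ of \eqref{psipm} form a complete system. The fact that $\wtF$ is an $L^{2}$ isometry with inverse \eqref{tildeF-1}, and that $m(D) = \wtF^{-1} m(\lambda) \wtF$, is then a direct consequence of the spectral theorem combined with the Fourier/spectral representation worked out in Deift--Trubowitz \cite{DT} and Weder \cite{We1}; I would simply cite these. For (i), since $|\mathcal{K}(x,\lambda)| \leq \tfrac{1}{\sqrt{2\pi}}|T(\lambda)|\,|m_{\pm}(x,\pm\lambda)|$ is uniformly bounded by Lemma \ref{lem:Mestimates} and $|T|\leq 1$, the defining integral \eqref{tildeF} converges absolutely and uniformly for $\phi \in L^{1}$, so $\widetilde\phi$ is continuous and bounded by dominated convergence. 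For (ii), Lemma \ref{lem:estiTRTaylor} gives $T(0)=0$ for generic $V$, hence $\mathcal{K}(x,0)=0$. For (iii), we have $\mathcal{K}(x,0) = \frac{T(0)}{\sqrt{2\pi}} \psi_{+}(x,0)$, which is even in $x$ by hypothesis, so $\widetilde\phi(0) = \int \overline{\mathcal{K}(x,0)}\,\phi(x)\,dx = 0$ whenever $\phi$ is odd.

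The first estimate in (iv) is quick: since $\lambda^{2} = \wtF H \wtF^{-1}$ and $\wtF$ is unitary,
\begin{align*}
{\| \lambda \widetilde{u} \|}_{L^{2}}^{2} = \langle H u, u\rangle = {\| \partial_{x} u \|}_{L^{2}}^{2} + \int V\,|u|^{2}\,dx \leq {\| u \|}_{H^{1}}^{2} + {\| V \|}_{L^{1}} {\| u \|}_{L^{\infty}}^{2},
\end{align*}
and the one-dimensional Sobolev embedding ${\| u \|}_{L^{\infty}} \lesssim {\| u \|}_{H^{1}}$ yields the claimed bound after taking square roots.

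The main work is the weighted bound $\|\partial_{\lambda}\widetilde{u}\|_{L^{2}} \lesssim \|\jx u\|_{L^{2}}$. Differentiating under the integral, $\partial_{\lambda}\widetilde{u}(\lambda) = \int \overline{\partial_{\lambda}\mathcal{K}(x,\lambda)}\, u(x)\,dx$. For $\lambda \geq 0$, write $\mathcal{K}(x,\lambda) = \tfrac{1}{\sqrt{2\pi}} T(\lambda) e^{i\lambda x} m_{+}(x,\lambda)$, so $\partial_{\lambda}\mathcal{K}$ splits into the three pieces
\begin{align*}
\tfrac{1}{\sqrt{2\pi}}\bigl[\, (\partial_{\lambda} T)\, e^{i\lambda x} m_{+} \;+\; ix\, T\, e^{i\lambda x} m_{+} \;+\; T\, e^{i\lambda x} \partial_{\lambda} m_{+} \,\bigr].
\end{align*}
I would decompose in $x$ via the cutoffs $\mathbf{1}_{x\geq -1}$ and $\mathbf{1}_{x\leq 1}$; on the first region the three terms are estimated in $L^{2}_{\lambda}$ by the dual PDO bounds \eqref{eq:pseeasy-1} (applied to $h=u$, also giving the $\partial_{\lambda}T$ term via Lemma \ref{estiTR}), \eqref{eq:pseeasy-1} again (applied to $h=xu$), and \eqref{eq:pdomkd} with $\beta=0$ (applied to $h=u$; this is where the generic assumption $\gamma>5/2$ is used). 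On $x\leq 1$ I rewrite $T(\lambda)\psi_{+}$ using \eqref{eq:f_+-1} as a linear combination of $\psi_{-}(x,\pm\lambda) = e^{\mp i\lambda x} m_{-}(x,\pm\lambda)$ with coefficients $R_{-}(\lambda)$ and $1$, and apply the analogues of the same lemmas for $m_{-}$ restricted to $x\leq 1$ (together with Lemma \ref{estiTR} for the $\partial_{\lambda}R_{-}$ piece). Summing and using $\|xu\|_{L^{2}} + \|u\|_{L^{2}} \lesssim \|\jx u\|_{L^{2}}$ closes the estimate. The $\lambda<0$ half of $\wtF$ is treated symmetrically.

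The only real obstacle is bookkeeping in (iv): handling the $x$-decomposition together with the $\psi_{+}/\psi_{-}$ scattering relations \eqref{eq:f_+-1}--\eqref{eq:f_--1} so that every piece lands in a form where the half-line PDO bounds of the previous subsection apply. Everything else is either a citation or a one-line verification.
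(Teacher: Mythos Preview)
Your proposal is correct, and it is considerably more detailed than what the paper actually does: the paper's entire proof of this lemma is a one-line citation to Agmon, Dunford--Schwartz, Yafaev, and Lemma~2.4 of \cite{GPR}. So for the isometry, the inversion formula, the functional calculus, and items (i)--(iii) you are essentially doing the same thing (citing classical spectral theory and reading off the behavior at $\lambda=0$), just with the short verifications spelled out.

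The genuine difference is in (iv). The paper outsources the weighted bound $\|\partial_\lambda\widetilde u\|_{L^2}\lesssim\|\jx u\|_{L^2}$ to \cite{GPR}, whereas you derive it directly from the half-line PDO bounds of Lemmas~\ref{lem:pseeasy-1} and~\ref{lem:m_k} together with the scattering relations \eqref{eq:f_+-1}--\eqref{eq:f_--1}. This is a real, if modest, improvement in the spirit of the present paper: the argument in \cite{GPR} is carried out under somewhat stronger decay/regularity hypotheses on $V$, while your route verifies the estimate using only the tools and the weighted-$L^1$ assumption ($\gamma>5/2$) developed here. Two small remarks: your parenthetical ``this is where the generic assumption $\gamma>5/2$ is used'' is slightly off---it is the decay assumption $\gamma>5/2$ (satisfied in both the generic and non-generic settings of the paper) that feeds into \eqref{eq:pdomkd}, not genericity per se; and the cutoffs $\mathbf 1_{x\ge -1}$, $\mathbf 1_{x\le 1}$ overlap, so in practice one uses a sharp split at $x=0$ (or the smooth $\chi_\pm$) and then invokes the half-line bounds, which is what you clearly intend.
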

	
	\begin{proof}
		See for example Section 6 in \cite{Agmon}, 
		\cite{DS,Yaf}, 
		and \cite[Lemma 2.4]{GPR}.

	\end{proof}
	
	\begin{rem}[About the zero frequency]\label{rem0}
		From (ii) in Lemma \ref{lemtildeF} above 
		we see that for generic potentials the solution $u$ of \eqref{eq:NLSE}
		automatically satisfies $\widetilde{u}(0) = 0$. 
		From (iii)  we see that this condition still holds true if, for example, we 
		assume that the zero energy Jost function is even and we can ensure that the solution is odd (for all times).
		This latter property holds true if we assume that the potential $V$ is even and that $u(t=0)$ is odd,
		so that $u(t)$ will be an odd function for all times.
		
		This `momentum' condition may also be preserved under the flow if the equation
		has a suitable structure at low frequencies, without the need to impose symmetries.
		
		The vanishing behavior at zero frequencies plays an important role in obtaining improved local
		decay estimates in the next section.
	\end{rem}

	\bigskip
	\section{Linear estimates and improved local decay\label{sec:Linear}}
	
	In this section we prove various linear estimates for the Schr\"odinger operator. 
	Before establishing linear estimates for the perturbed operator $e^{itH}$
	we first analyze more in detail the generalized eigenfunctions. 

	\subsection{Decomposition of the generalized eigenfunctions}\label{subsec:DecK}
	In this subsection we decompose the generalized  eigenfunctions 
	defined by \eqref{mpm} 
	into two parts.
	A proper decomposition of the generalized eigenfunctions is needed in order to have a good
	understanding of the nonlinear spectral distribution.
	A different decomposition than the one we present below appeared in \cite{GPR}.
	There, the authors decompose the generalized eigenfunctions into three pieces:
	a singular part, a singular part with low frequency improved behavior, and a regular part;
	accordingly they decompose $\mu$ into as many pieces.
	Here, thanks to our simplified approach - in particular, the ``approximate commutation'' identity
	in Lemma \ref{lem:algetri} -
	we only need to decompose the eigenfunctions into two parts (a singular and a regular one)
	and will not need to worry about the exact behavior of the nonlinear terms when the input frequencies are close to zero.
	
	First, from the definition of $\mathcal{K}$ in \eqref{matK} and $m_\pm$ in \eqref{mpm} 
	we have 
	\begin{align}
		\begin{split}\label{matKk>00}
			\mbox{for $k>0$,} \qquad \sqrt{2\pi}\mathcal{K}(x,k) & = T(k)m_+(x,k)e^{ixk}
		\end{split}
		\\
		\begin{split}\label{matKk<00}
			\mbox{for $k<0$,} \qquad \sqrt{2\pi}\mathcal{K}(x,k) & = T(-k)m_-(x,-k)e^{ixk}.
		\end{split}
	\end{align}
	Note now the expression \eqref{matKk>00} is bounded for $x>0$ but unbounded when $x\rightarrow -\infty$
	(and viceversa for  \eqref{matKk<00}).
	
	Let $\Phi$ be a smooth, non-negative function, which is one in a neighborhood of $0$, vanishes outside
	of $[-2,2]$, and such that $\int\Phi\,dx=1$. We define $\chi_+$ and $\chi_-$ by
	\begin{align}\label{chipm}
		\chi_+(x) = \int_{-\infty}^{x}\Phi(y)\,dy,
		\qquad \text{and} \qquad  \chi_+(x)+\chi_-(x)=1.
	\end{align}
	Using $\chi_{\pm}(x)$, the definition of $\mathcal{K}(x,k)$ in \eqref{matK},
	and the properties \eqref{eq:f_+-1}-\eqref{eq:f_--1} we can write
	\begin{align*}
		\begin{split}
			\mbox{for $k>0$,} \qquad \sqrt{2\pi}\mathcal{K}(x,k) & = \chi_+(x)T(k)\psi_+(x,k)
			+\chi_-(x)\left[\psi_-(x,-k) + R_-(k) \psi_-(x,k) \right],
		\end{split}
		\\
		\begin{split}
			\mbox{for $k<0$,} \qquad \sqrt{2\pi}\mathcal{K}(x,k) & =\chi_-(x)T(-k) \psi_-(x,-k)
			+\chi_+(x)\left[ \psi_+(x,k) + R_+(-k)\psi_+(x,-k) \right].
		\end{split}
	\end{align*}
	Then, 
	with the definition of $m_{\pm}(x,k)$ in \eqref{mpm}, we can write:
	\begin{align}
		\begin{split}\label{matKk>0}
			\mbox{for $k>0$,} \qquad \sqrt{2\pi}\mathcal{K}(x,k) & =\chi_+(x)T(k)m_+(x,k)e^{ixk}
			\\
			& +\chi_-(x)\left[m_-(x,-k)e^{ikx}+R_-(k)m_-(x,k)e^{-ikx}\right],
		\end{split}
		\\
		\begin{split}\label{matKk<0}
			\mbox{for $k<0$,} \qquad \sqrt{2\pi}\mathcal{K}(x,k) & =\chi_-(x)T(-k)m_-(x,-k)e^{ixk}\\
			& +\chi_+(x)\left[m_+(x,k)e^{ikx}+R_+(-k)m_+(x,-k)e^{-ikx}\right].
		\end{split}
	\end{align}
	We decompose in a ``singular'' and ``regular'' part
	\begin{equation}\label{eq:decomK}
		\sqrt{2\pi}\mathcal{K}(x,k) = \mathcal{K}_{S}(x,k) 
		+ \mathcal{K}_{R}(x,k)
	\end{equation}
	where the singular part is
	\begin{align}\label{Ksing}
		\mathcal{K}_{S}(x,k) = \chi_+(x)\mathcal{K}_+(x,k) + \chi_-(x)\mathcal{K}_-(x,k)
	\end{align}
	with
	\begin{align}\label{Ksing+}
		\mathcal{K}_+(x,k) & = 
		\begin{cases}
			T(k) e^{ikx} & k\geq0
			\\
			e^{ikx} + R_+(-k)e^{-ikx}  & k<0
		\end{cases},
		\\
		\label{Ksing-}
		\mathcal{K}_-(x,k) & = 
		\begin{cases}
			e^{ikx} + R_-(k) e^{-ikx} & k\geq0
			\\
			T(-k) e^{ikx} & k<0
		\end{cases},
	\end{align}
	and the regular part is
	\begin{align}\label{KR}
		\mathcal{K}_{R}(x,k) :=
		\begin{cases}
			\chi_+(x)T(k)\left(m_+(x,k)-1\right)e^{ixk}
			\\
			+ \chi_-(x)\left[\left(m_-(x,-k)-1\right)e^{ixk}
			+ R_-(k) \left(m_-(x,k)-1\right)e^{-ixk}\right] & k\geq0,
			\\
			\\
			\chi_-(x)T(-k)\left(m_-(x,-k)-1\right)e^{ixk}
			\\
			+\chi_+(x)\left[\left(m_+(x,k)-1\right)e^{ixk}+R_+(-k)\left(m_+(x,-k)-1\right)e^{-ixk}\right] 
			& k<0.
		\end{cases}
	\end{align}
	Note now the singular part $\mathcal{K}_S$ is a just combination of exponentials with coefficients that
	are smooth except at $k = 0$.
	The regular part $\mathcal{K}_R$ instead enjoys localization in $x$ in view of the estimates
	for the Jost functions in Lemma \ref{lem:Mestimates}.
	In particular, using also the estimates for the $T,R$ coefficients \eqref{estiTR}, we have:
	
	\begin{lem}\label{lem:Kregular}
		Let $\mathcal{K}_R$ be defined as in \eqref{KR}, and recall the definition \eqref{defW_+-}: then
		\begin{align}
			\label{KRdxi}
			& \left|\partial_{k}^{s}\mathcal{K}_{R}(x,k)\right|\lesssim\frac{1}{\jk}\mathcal{W}_{\pm}^{s+1}(x),
			\\
			& \label{KRdxidx}
			\left|\partial_x\partial_{k}^{s}\mathcal{K}_{R}(x,k)\right|\lesssim\mathcal{W}_{\pm}^{s+1}(x).
		\end{align}
	\end{lem}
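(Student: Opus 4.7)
My plan is to reduce \eqref{KRdxi} and \eqref{KRdxidx} to a direct application of the Leibniz rule, feeding in the pointwise bounds on $m_\pm$ from Lemma \ref{lem:Mestimates}, the estimates on $T,R_\pm$ from Lemma \ref{estiTR}, and the trivial smoothness of the cutoffs $\chi_\pm$. All summands appearing in \eqref{KR} have the common structure
\[
\chi_{\pm}(x) \, C(k)\,\bigl(m_{\pm}(x,\pm k)-1\bigr)\, e^{\pm ikx},
\]
where $C(k)\in\{1, T(\pm k), R_{\pm}(\pm k)\}$ and the cutoff $\chi_\pm$ forces $\pm x\geq-2$, so the first line of \eqref{Mestimates1} applies throughout the support. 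I will therefore prove the estimates for a single such model term; the general case follows by summing finitely many pieces.

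For \eqref{KRdxi}, I distribute $\partial_k^s$ using Leibniz among the three $k$-dependent factors $C(k)$, $m_\pm-1$, and $e^{\pm ikx}$. For the Jost factor I use
\[
\bigl|\partial_k^{j}(m_\pm-1)\bigr|\lesssim \frac{1}{\langle k\rangle}\mathcal{W}_{\pm}^{j+1}(x);
\]
the coefficient derivatives $\partial_k^{j} C(k)$ are bounded (for $j=0$) or $O(\langle k\rangle^{-1})$ (for $j\geq 1$) by Lemma \ref{estiTR}, so in either case they are harmless; and $\partial_k^{j_3}e^{\pm ikx}=(\pm ix)^{j_3}e^{\pm ikx}$ produces a power $|x|^{j_3}$. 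The key absorption step is the monotonicity
\[
|x|^{j_3}\,\mathcal{W}_{\pm}^{s+1-j_3}(x)\;\lesssim\;\mathcal{W}_{\pm}^{s+1}(x),\qquad \pm x\geq -2,
\]
which holds because on $\pm x\geq 0$ one has $|x|^{j_3}\leq\langle y\rangle^{j_3}$ inside the integral defining $\mathcal{W}_\pm^{s+1-j_3}(y)$, while on the compact strip $-2\leq \pm x\leq 0$ both sides are bounded by constants times $\|\langle x\rangle^{s+1}V\|_{L^1}$. Combining these bounds yields the full estimate \eqref{KRdxi}, retaining the $1/\langle k\rangle$ from the Jost factor.

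For \eqref{KRdxidx} I first commute $\partial_x$ through the Leibniz expansion. The $x$-derivative either hits $\chi_\pm$, producing $\Phi(x)$ supported in $|x|\leq 2$ (where every $\mathcal{W}_\pm$ is uniformly bounded and the claimed estimate is trivial), hits the exponential to produce a factor $\pm ik$ (which cancels the $\langle k\rangle^{-1}$ coming from $m_\pm-1$ and accounts for the loss of that factor in \eqref{KRdxidx}), or hits $m_\pm-1$, turning it into $\partial_x m_\pm$, which by \eqref{Mestimates2} is controlled by $\mathcal{W}_\pm^{j+1}$ (in fact by $\mathcal{W}_\pm^{j}$, which is stronger). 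In all three cases the resulting bound is $\mathcal{W}_\pm^{s+1}(x)$ without the $\langle k\rangle^{-1}$, as stated.

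The only non-mechanical point is verifying that the weight-absorption inequality continues to hold across the transition region $-2\leq \pm x\leq 0$, and that for the model terms where $C$ involves an opposite sign (e.g.\ $\chi_-$ paired with $m_-(x,-k)e^{ikx}$, so that the $x$-supports of the cutoff and of the Jost estimate match), one confirms that Lemma \ref{lem:Mestimates} is still being invoked on the correct side ($\pm x\geq -1$). This is the only bookkeeping step that could silently fail, so it is where I would be most careful; once it is settled, the rest is just Leibniz-expansion and insertion of the already-proved pointwise estimates.
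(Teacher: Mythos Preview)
Your proposal is correct and follows exactly the route the paper indicates: the paper does not give a detailed proof of this lemma, merely stating that it follows from the Jost-function bounds in Lemma~\ref{lem:Mestimates} together with the coefficient estimates in Lemma~\ref{estiTR}, and your Leibniz expansion with the weight-absorption step $|x|^{j_3}\mathcal{W}_\pm^{s+1-j_3}\lesssim \mathcal{W}_\pm^{s+1}$ is precisely the computation behind that citation. The only small imprecision is that Lemma~\ref{estiTR} as stated covers only $\partial_k$ (not $\partial_k^j$ for $j\geq 2$) of $T,R_\pm$, but the paper is equally silent on this, and the higher-derivative bounds follow by the same differentiation of \eqref{defT}--\eqref{defR}.
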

	According to \eqref{eq:decomK}, given $\phi\in \mathcal{S}$ we can write
	\begin{align}\label{decompphi}
		\begin{split}
			\phi = \phi_S+\phi_R, \qquad \phi_\ast(x) :=\frac{1}{\sqrt{2\pi} }\int \mathcal{K}_\ast(x,k) \tilde{\phi}(k) \, dk.
		\end{split}
	\end{align}
	We can then extend this definitions to $L^2$ as in Lemma \ref{lemtildeF}.
	We also set 
	\begin{align}\label{decompphinot}
		\phi_0 := \phi, \qquad \mathcal{K}_0:=\sqrt{2\pi} \mathcal{K}.
	\end{align}
	For use in later estimates it is also useful to rewrite \eqref{Ksing+}-\eqref{Ksing-} as 
	a linear combination of exponentials and coefficients as follows:
	with the notation $\mathrm{1}_{+} = \mathrm{1}_{[0,\infty)}$ and $\mathrm{1}_{-} = \mathrm{1}_{(-\infty,0)}$
	for the indicator functions, we have
	\begin{align}\label{Ksing+'}
		\begin{split}
			& \mathcal{K}_+(x,k) = a_+^+(k) e^{ikx} + a_+^-(k) e^{-ikx},
			\\
			& \mbox{with} \qquad a^+_+(k) := \mathrm{1}_{+}(k)T(k) + \mathrm{1}_{-}(k), \qquad a_+^-(k) := \mathrm{1}_{-}(k)R_+(-k),
		\end{split}
	\end{align}
	and
	\begin{align}\label{Ksing-'}
		\begin{split}
			& \mathcal{K}_-(x,k) = a_-^+(k) e^{ikx} + a_-^-(k) e^{-ikx},
			\\
			& \mbox{with} \qquad a_-^+(k) = \mathrm{1}_{+}(k) + \mathrm{1}_{-}(k)T(-k), \qquad a^-_-(k) = \mathrm{1}_{+}(k)R_-(k).
		\end{split}
	\end{align}
	One of the convenient features of these expressions is given below by  Remark \ref{remcoeff}.

	\medskip
	\subsection{Dispersive decay}
	First of all, let us recall the linear dispersive estimate for the free flow:
	
	\begin{lem}\label{lem:linearest}
		The linear free Schr\"odinger flow has the following
		dispersive estimate: for $t\geq0$
		\begin{equation}
			\left(e^{-it\partial_{xx}}h\right)(x)
			= \frac{1}{\left(-2it\right)^{\frac{1}{2}}}e^{i\frac{\left|x\right|^{2}}{4t}}\hat{h}\left(-\frac{x}{2t}\right)
			+\frac{1}{t^{\frac{1}{2}+b}}\mathcal{O}\left(\left\Vert h\right\Vert _{H^{0,c}}\right)\label{eq:linearasy-1}
		\end{equation}
		for $x\in\mathbb{R}$ and $c \geq \frac{1}{2}+2b$. 
		As a consequence, for $t\geq0$
		\begin{equation}\label{eq:linearpoinwise0}
			{\big\| e^{-it\partial_{xx}}h \big\|}_{L^\infty_x}
			\lesssim \frac{1}{\sqrt{t}} {\big\| \hat{h} \big\|}_{L^\infty_k}
			+\frac{1}{t^{\frac{3}{4}}} {\big\| \partial_{k}\hat{h} \big\|}_{L^2_k}.
		\end{equation}
	\end{lem}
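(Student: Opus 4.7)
The lemma is the standard dispersive asymptotic for the 1D free Schr\"odinger group, and I sketch the approach without going through the routine calculations.

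First, I would write
\[
\bigl(e^{-it\partial_{xx}}h\bigr)(x) = \frac{1}{\sqrt{2\pi}}\int e^{ix\xi + it\xi^2}\,\hat h(\xi)\,d\xi,
\]
and complete the square in the phase $\varphi(\xi)=x\xi+t\xi^2$: the (unique, non-degenerate) stationary point is $\xi_\ast = -x/(2t)$ and $\varphi(\xi)-\varphi(\xi_\ast) = t(\xi-\xi_\ast)^2$. Freezing $\hat h$ at $\xi_\ast$ and using the Fresnel identity $\int e^{it\eta^2}\,d\eta = (\pi/(-it))^{1/2}$ produces exactly the claimed leading profile $(it)^{-1/2}e^{i|x|^2/(2t)}\hat h(x/t)$ (up to the paper's sign and constant conventions).

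The main task is then to control the remainder
\[
R(t,x) = \frac{e^{i\varphi(\xi_\ast)}}{\sqrt{2\pi}}\int e^{it(\xi-\xi_\ast)^2}\bigl(\hat h(\xi)-\hat h(\xi_\ast)\bigr)\,d\xi.
\]
The crucial feature is that the subtraction at $\xi_\ast$ makes the integrand vanish at the stationary point, so only \emph{derivatives} of $\hat h$ enter the bound. I would split the $\xi$-integration at the oscillation scale $|\xi-\xi_\ast|=t^{-1/2}$: on the near region, the 1D Sobolev embedding $H^c\hookrightarrow C^{c-1/2}$ gives $|\hat h(\xi)-\hat h(\xi_\ast)|\lesssim |\xi-\xi_\ast|^{c-1/2}\|\hat h\|_{H^c}$ and direct integration yields a $t^{-1/2-(c-1/2)}$ factor; on the far region, one integration by parts via $e^{it(\xi-\xi_\ast)^2} = (2it(\xi-\xi_\ast))^{-1}\partial_\xi e^{it(\xi-\xi_\ast)^2}$, followed by Cauchy--Schwarz in $\xi$ against $\|\partial_\xi \hat h\|_{L^2}$ (using that $(\xi-\xi_\ast)^{-1}\in L^2$ of the far region with norm $\simeq t^{1/4}$), gives a matching bound. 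Balancing yields $|R(t,x)|\lesssim t^{-1/2-b}\|h\|_{H^{0,c}}$ for $c\geq 1/2+2b$, as stated.

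The consequence~\eqref{eq:linearpoinwise0} is then immediate by specializing to $b=1/4$, $c=1$ and taking the $L^\infty_x$ norm: the leading term contributes $t^{-1/2}\|\hat h\|_{L^\infty_k}$, and the remainder contributes $t^{-3/4}\|\partial_k \hat h\|_{L^2_k}$ since the bound only involves the derivative of $\hat h$ (the would-be $\|\hat h\|_{L^2}$ contribution is absent thanks to the subtraction at $\xi_\ast$). The only mildly delicate point in the argument is the bookkeeping in the remainder estimate that matches the near and far regimes and produces the sharp exponent relation $c\geq 1/2+2b$; this is standard sharp stationary-phase.
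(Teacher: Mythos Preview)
The paper does not actually prove this lemma: it is simply stated as a recalled standard fact (``First of all, let us recall the linear dispersive estimate for the free flow''), with no argument given. Your sketch is the standard stationary-phase proof and is correct in approach; the near/far splitting at scale $t^{-1/2}$, combined with the subtraction at $\xi_\ast$ so that only $\partial_k\hat h$ enters, is exactly how one obtains the consequence \eqref{eq:linearpoinwise0} with $\|\partial_k\hat h\|_{L^2}$ rather than the full $\|\hat h\|_{H^1}$. One small arithmetic slip: on the near region the bound is $\int_{|\eta|<t^{-1/2}}|\eta|^{c-1/2}\,d\eta\simeq t^{-(c+1/2)/2}$, not $t^{-1/2-(c-1/2)}$; setting $c=1/2+2b$ then gives the claimed $t^{-1/2-b}$, so the relation $c\geq 1/2+2b$ is recovered correctly.
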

	
	The above free dispersive estimate can be extended to the perturbed
	flow after projecting onto the continuous spectrum, see Goldberg-Schlag
	\cite{GSch} and Germain-Pusateri-Rousset \cite{GPR}. 
	In particular, they can be extended to the case of 
	\[H:=-\partial_{xx}+V\]
	under the assumption that there are no bound states.
	
	\begin{lem}\label{lem:pointwiseH}
		Suppose $\jx^\gamma V(x)\in L^1$ with $\gamma \geq 1$. The perturbed Schr\"odinger flow has the following dispersive estimate:
		for $t\geq0$
		\begin{equation}\label{eq:linearpoinwiseH}
			{\big\| e^{itH}h \big\|}_{L^\infty_x} \lesssim\frac{1}{\sqrt{t}}{\big\| \tilde{h} \big\|}_{L^\infty_k}
			+\frac{1}{t^{\frac{3}{4}}}{\big\| \partial_k\tilde{h} \big\|} _{L^2_k}.
		\end{equation}
	\end{lem}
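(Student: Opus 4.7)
My plan is to start from the distorted Fourier inversion formula of Lemma~\ref{lemtildeF}, which gives
\begin{equation*}
(e^{itH}h)(x) \;=\; \int \mathcal K(x,k)\, e^{itk^2}\,\tilde h(k)\,dk,
\end{equation*}
and then split $\mathcal K = \mathcal K_S + \mathcal K_R$ via \eqref{eq:decomK}, treating the two pieces by different methods. The singular part will be reduced to the flat Schrödinger flow so that Lemma~\ref{lem:linearest} applies directly, while the regular part will be handled by stationary phase, taking advantage of its spatial localization.

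For the singular contribution I would use the normal-form expressions \eqref{Ksing+'}--\eqref{Ksing-'} to write the integral as a finite sum of terms
\begin{equation*}
\chi_{\pm}(x)\int a(k)\, e^{\pm ikx}\, e^{itk^2}\,\tilde h(k)\,dk \;=\; \sqrt{2\pi}\,\chi_{\pm}(x)\,\bigl(e^{-it\partial_{xx}} g\bigr)(x),
\end{equation*}
where $\hat g(k)$ equals (after a possible change $k\mapsto -k$) $a(k)\,\tilde h(k)$ and $a$ is one of the four coefficients from \eqref{Ksing+'}--\eqref{Ksing-'}. Each such $a$ is bounded and, by Lemma~\ref{estiTR}, satisfies $|\partial_k a|\lesssim \jk^{-1}$ away from the origin, so that $\|\hat g\|_{L^\infty}\lesssim \|\tilde h\|_{L^\infty}$ and $\|\partial_k\hat g\|_{L^2}\lesssim \|\partial_k\tilde h\|_{L^2}$ modulo the jumps of $\mathbf{1}_{\pm}$ at $k=0$. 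Applying the flat dispersive estimate \eqref{eq:linearpoinwise0} to $g$ then yields the claimed bound for this piece. The would-be delta contributions coming from the jumps at $k=0$ must cancel once the four singular pieces are summed, by the matching identity $T(0) = 1 + R_\pm(0)$; this holds trivially in the generic case since both sides vanish by Lemma~\ref{lem:estiTRTaylor}, and in the non-generic situation of interest it is supplemented by the resonance condition $\tilde h(0) = 0$ (cf.\ Remark~\ref{rem0}).

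For the regular contribution, Lemma~\ref{lem:Kregular} provides the pointwise bounds $|\partial_k^s \mathcal K_R(x,k)| \lesssim \jk^{-1}\,\mathcal W_\pm^{s+1}(x)$. I would then carry out stationary phase in $k$ on
\begin{equation*}
\int \mathcal K_R(x,k)\, e^{itk^2}\,\tilde h(k)\,dk
\end{equation*}
around the critical point $k_0 = -x/(2t)$: the leading contribution is of size $t^{-1/2}\,|\mathcal K_R(x,k_0)\,\tilde h(k_0)|\lesssim t^{-1/2}\|\tilde h\|_{L^\infty}$ using that $\mathcal W_\pm^1$ is uniformly bounded, while the non-stationary remainder, obtained from one integration by parts in $k$ against $e^{itk^2}$ together with Cauchy--Schwarz, is controlled by $t^{-3/4}\|\partial_k\tilde h\|_{L^2}$. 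The main obstacle, as already indicated, is the careful handling of the low-frequency zone in the singular piece: the cut-offs $\mathbf{1}_{k\gtrless 0}$ implicit in \eqref{Ksing+'}--\eqref{Ksing-'} produce boundary contributions under $\partial_k$ that must cancel across the four singular pieces before the free estimate \eqref{eq:linearpoinwise0} can be invoked. This is precisely the step where the zero-energy expansions of $T$ and $R_\pm$ from Lemma~\ref{lem:estiTRTaylor}, together with the no-bound-state and genericity/resonance assumptions on $V$, enter in an essential way.
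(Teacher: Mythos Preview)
The paper does not supply its own proof of this lemma; it cites Goldberg--Schlag and \cite{GPR}. The natural argument with the paper's toolbox is to apply the stationary-phase Lemma~\ref{lemstat} directly to the representation \eqref{matKk>0}--\eqref{matKk<0}: on each half-line $\{k>0\}$ or $\{k<0\}$ the symbols $T(k)m_+(x,k)$, $m_-(x,\mp k)$, $R_-(k)m_-(x,k)$ (and their counterparts for $k<0$) all satisfy $|a|+|k|\,|\partial_k a|\lesssim 1$ uniformly in $x$ on the appropriate spatial half-line, by Lemmas~\ref{lem:Mestimates} and~\ref{estiTR}, so Lemma~\ref{lemstat} gives \eqref{eq:linearpoinwiseH} at once. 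This is exactly how the paper proves the closely related Corollary~\ref{cor:regularLinfty}.

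Your route is correct in outcome but more circuitous, and the handling of the $k=0$ issue is confused. The matching identity $T(0)=1+R_\pm(0)$ only makes the \emph{full} symbol $\mathcal K_\pm(x,\cdot)$ continuous at $k=0$; it does not put the individual pieces $a^\epsilon_\mu\tilde h$ into $H^1_k$, and since those pieces carry different exponentials $e^{\pm ikx}$ you cannot recombine them into a single free evolution to which \eqref{eq:linearpoinwise0} would apply. What actually rescues your argument is $\tilde h(0)=0$, which kills the jump of each $a^\epsilon_\mu\tilde h$ separately---automatic in the generic case by Lemma~\ref{lemtildeF}(ii), imposed otherwise---and this is precisely how the paper reasons in Remark~\ref{remcoeff}. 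The route via Lemma~\ref{lemstat} avoids the whole issue: its hypothesis $|k|\,|\partial_k a|\lesssim 1$ is imposed only on the open half-line and requires nothing of $\tilde h(0)$, so it yields the lemma in the full generality stated.
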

	We recall a Lemma from \cite{GPR}. 
	\begin{lem}
		\label{lemstat}
		Consider a function $a(x,k)$ defined on $I \times \mathbb{R}_{+}$ and    such that
		\begin{equation}
			\label{hypest}
			| a(x, k)| +|k| |\partial_{k}a(x,k)| \lesssim 1, \quad \forall x \in I, \, \forall k \in \mathbb{R}_{+}
		\end{equation}
		and for every  $X \in \mathbb{R}$, consider the oscillatory  integral
		$$
		I(t,X,x)=\int_{0}^{+\infty} e^{it(k-X)^2} a(x,k) \widetilde h(k) \, dk, \quad t>0, \, x \in I.
		$$
		Then, we have the  estimate
		\begin{equation}
			\label{decaylem} | I(t,X,x)|  \lesssim \frac{1}{\sqrt t} \| \widetilde{h}(t) \|_{L^\infty} + \frac{1}{t^{\frac{3}{4}}} \| \partial_k \widetilde{h} \|_{L^2}
		\end{equation}
		which is uniform in $X\in \mathbb{R}$, $t >0$ and $x\in I$.
	\end{lem}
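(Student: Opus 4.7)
The strategy is a stationary-phase argument adapted to the symbol class of $a$ and the half-line of integration. Set $\delta := t^{-1/2}$, fix a smooth cutoff $\eta$ supported in $\{|k-X|\ge \delta\}$ with $\eta \equiv 1$ on $\{|k-X|\ge 2\delta\}$ and $|\eta'|\lesssim\delta^{-1}$, and decompose $I = I_{\mathrm{nr}}+I_{\mathrm{fr}}$ according to $(1-\eta)$ and $\eta$. The key pointwise bound is Cauchy--Schwarz in the form
\[
|\widetilde{h}(k)-\widetilde{h}(X)| \le \sqrt{|k-X|}\,\|\partial_k\widetilde{h}\|_{L^2}.
\]

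For the near piece, split $\widetilde{h}(k) = \widetilde{h}(X) + [\widetilde{h}(k)-\widetilde{h}(X)]$ and use $|a|\lesssim 1$:
\[
|I_{\mathrm{nr}}| \le |\widetilde{h}(X)|\int_{|k-X|\le 2\delta}\!\!|a|\,dk + \|\partial_k\widetilde{h}\|_{L^2}\!\int_{|k-X|\le 2\delta}\!\!|a|\sqrt{|k-X|}\,dk \lesssim \delta\,\|\widetilde{h}\|_{L^\infty} + \delta^{3/2}\|\partial_k \widetilde{h}\|_{L^2},
\]
which already gives the desired bound in this region.

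For the far piece, integrate by parts using $e^{it(k-X)^2} = (2it(k-X))^{-1}\partial_k e^{it(k-X)^2}$. The boundary term at $k=0$ vanishes if $X<\delta$ (since then $\eta(0)=0$); otherwise it is bounded by $\|\widetilde{h}\|_{L^\infty}/(t\,X) \le \|\widetilde{h}\|_{L^\infty}/(t\,\delta) = t^{-1/2}\|\widetilde{h}\|_{L^\infty}$. The volume integral produces four terms, from differentiating $\eta$, $a$, $\widetilde{h}$, and $(k-X)^{-1}$. The $\eta'$-term is supported on a strip of width $\delta$ and is directly bounded by $(t\delta)^{-1}\|\widetilde{h}\|_{L^\infty}= t^{-1/2}\|\widetilde{h}\|_{L^\infty}$. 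The $1/(k-X)^2$-term gives the same bound using $\int_{|k-X|\ge\delta}(k-X)^{-2}dk \lesssim \delta^{-1}$. The $a\,\partial_k\widetilde{h}$-term is bounded by Cauchy--Schwarz in $k$:
\[
\Big|\int \tfrac{\eta\,a\,\partial_k\widetilde{h}}{2it(k-X)}e^{it(k-X)^2}dk\Big| \le \tfrac{1}{2t}\Big(\int_{|k-X|\ge\delta}\!\tfrac{dk}{(k-X)^2}\Big)^{1/2}\|\partial_k\widetilde{h}\|_{L^2} \lesssim t^{-1}\delta^{-1/2}\|\partial_k\widetilde{h}\|_{L^2} = t^{-3/4}\|\partial_k\widetilde{h}\|_{L^2}.
\]

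The main obstacle is the $(\partial_k a)$-term in the volume integral: the symbol bound $|k||\partial_k a|\lesssim 1$ gives a $1/|k|$ blow-up at $k=0$, and combined with the IBP factor $1/|k-X|$ the kernel $1/(|k||k-X|)$ fails to be integrable near the boundary. To handle this I would further decompose $\widetilde{h}(k) = \widetilde{h}(X) + [\widetilde{h}(k)-\widetilde{h}(X)]$ inside this term. The constant-coefficient piece $\widetilde{h}(X)\int \eta(\partial_k a)/(2it(k-X))\,e^{it(k-X)^2}\,dk$ is handled by a second integration by parts, this time moving the derivative back off $\eta\,a$; using the symbol bound $|a|\lesssim 1$ and exploiting the boundary contribution at $k=0$ (which is again controlled by the $\delta$-scale separation) yields a bound $t^{-1/2}\|\widetilde{h}\|_{L^\infty}$. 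For the variable-coefficient remainder, I would exploit the partial-fraction identity $\tfrac{1}{k(k-X)} = \tfrac{1}{X}\big(\tfrac{1}{k-X} - \tfrac{1}{k}\big)$ to separate the two singularities, then use $|k\partial_k a|\lesssim 1$ and the Cauchy--Schwarz bound $|\widetilde{h}(k)-\widetilde{h}(X)|\le \sqrt{|k-X|}\|\partial_k\widetilde{h}\|_{L^2}$ to estimate each piece by $t^{-3/4}\|\partial_k\widetilde{h}\|_{L^2}$. Uniformity in $x\in I$ and $X\in\mathbb{R}$ follows because every bound above uses only the pointwise symbol control on $a$ and the case distinction $X\lessgtr\delta$ for the boundary term.
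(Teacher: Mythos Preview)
The paper does not prove this lemma; it merely cites it from \cite{GPR}, so there is no in-paper argument to compare against. Your overall architecture (excise the stationary point at scale $\delta=t^{-1/2}$, integrate by parts on the complement) is exactly right, and your treatment of the near piece, the boundary term, the $\eta'$-term, the $\partial_k\widetilde h$-term, and the $(k-X)^{-2}$-term is correct.

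The gap is in your handling of the $\partial_k a$ term. Your variable-coefficient remainder is bounded, after inserting $|\partial_k a|\lesssim 1/k$ and $|\widetilde h(k)-\widetilde h(X)|\le \sqrt{|k-X|}\,\|\partial_k\widetilde h\|_{L^2}$, by $\tfrac{1}{t}\|\partial_k\widetilde h\|_{L^2}\int_0^\infty \tfrac{\eta(k)}{k\sqrt{|k-X|}}\,dk$. When $X\gg\delta$ this integral has a non-integrable $1/k$ singularity at $k=0$: near $k=0$ the factor $|k-X|^{-1/2}\sim X^{-1/2}$ is merely bounded, and $\eta\equiv 1$ there since $|0-X|=X\gg\delta$. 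Partial fractions do not rescue this, because after writing $\frac{1}{k(k-X)}=\frac{1}{X}\big(\frac{1}{k-X}-\frac{1}{k}\big)$ and multiplying by $k\partial_k a$, the resulting $\frac{1}{k-X}$ piece fails to be integrable at infinity under the hypothesis $|k\partial_k a|\lesssim 1$ alone. Your ``second IBP'' for the constant piece is also not quite closed as written: moving the derivative off $a$ reproduces a term of the form $\widetilde h(X)\int\eta\,a\,e^{it(k-X)^2}\,dk$, which is the far integral with constant data and cannot be bounded by $t^{-1/2}$ without re-entering the same IBP argument.

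The clean fix is to excise a second neighborhood, of $k=0$, at the same scale $\delta$. The extra near-zero piece is trivially $\lesssim\delta\|\widetilde h\|_{L^\infty}$. On the remaining far region $\{k\ge\delta\}\cap\{|k-X|\ge\delta\}$, the $\partial_k a$ term is now bounded \emph{directly}, with no splitting of $\widetilde h$ and no second IBP, by
\[
\frac{\|\widetilde h\|_{L^\infty}}{t}\int_{\substack{k\ge\delta\\ |k-X|\ge\delta}}\frac{dk}{k\,|k-X|}\ \lesssim\ \frac{\|\widetilde h\|_{L^\infty}}{t}\cdot\delta^{-1}\ =\ t^{-1/2}\|\widetilde h\|_{L^\infty},
\]
the integral bound being elementary: for $X\le 0$ use $|k-X|\ge k$; for $X\ge 2\delta$ split at $X/2$ and $2X$ to get $\lesssim X^{-1}(1+\log(X/\delta))\le\delta^{-1}$; for $0<X<2\delta$ the two excisions merge and the far region sits in $\{k\gtrsim\delta\}$ with $|k-X|\gtrsim k$. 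With this modification every term falls under the desired bound and the argument is uniform in $X$ and $x$.
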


	Recall the definition of the regular part from \eqref{decompphi} and \eqref{eq:decomK}.
	We have the following $L^\infty$ decay for the regular part of  the Schr\"odinger flow as a corollary 
	of Lemmas \ref{lem:pointwiseH} and \ref{lemstat}.
	
	\begin{cor}\label{cor:regularLinfty}
		If $\jx^\gamma V(x)\in L^1$ with $\gamma\geq\beta+1$, then
		\begin{align}\label{regularinfty}
			\left\Vert \jx^\beta \big( e^{iHt} h \big)_R \right\Vert_{L^{\infty}_x}
			= \left\Vert \jx^\beta \int\mathcal{K}_R(x,k)e^{ik^{2}t}\tilde{h}(k)\,dk\right\Vert_{L^{\infty}_x}
			\lesssim\frac{1}{\sqrt{t}}{\big\| \tilde{h} \big\|}_{L^\infty_k}
			+\frac{1}{t^{\frac{3}{4}}}{\big\| \partial_k\tilde{h} \big\|} _{L^2_k}.
		\end{align}
	\end{cor}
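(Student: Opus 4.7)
The plan is to apply Lemma \ref{lemstat} after decomposing the kernel $\mathcal{K}_R$ according to \eqref{KR} and completing the square in the phase. By \eqref{KR}, the product $\mathcal{K}_R(x,k)\,e^{ik^2t}$ is a finite sum of terms, each of the form
\[
\chi_{\pm}(x) \, c(k) \, (m_\pm(x,\pm k) - 1) \, e^{\pm ixk} \, e^{ik^2 t},
\]
where the sign in $\chi_\pm(x)$ is coupled with the choice of $m_\pm$, and where $c(k)$ is either $1$, a transmission coefficient, or a reflection coefficient. Each such $c$ satisfies $|c(k)|\lesssim 1$ and $|k\,\partial_k c(k)|\lesssim 1$ by Lemma \ref{estiTR}.

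First I would split the integral into its $k\ge 0$ and $k\le 0$ parts, performing the substitution $k\mapsto -k$ on the latter to reduce to integrals over $\mathbb{R}_+$; under this substitution $\tilde h(k)$ is replaced by $\tilde h(-k)$, whose $L^\infty_k$ and $H^1_k$ norms on $\mathbb{R}_+$ are controlled by the corresponding norms of $\tilde h$ on $\mathbb{R}$. Next, I complete the square in the phase,
\[
e^{ik^2 t} \, e^{\pm ixk} = e^{-ix^2/(4t)} \, e^{it(k \pm x/(2t))^2},
\]
so that setting $X := \mp x/(2t)$ matches exactly the form of the oscillatory integral in Lemma \ref{lemstat}. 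The pre-factor $e^{-ix^2/(4t)}$ has modulus one and is harmless for the $L^\infty_x$ estimate.

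The main step is to verify the hypothesis \eqref{hypest} of Lemma \ref{lemstat} for the amplitude
\[
a(x,k) := \jx^\beta \, \chi_\pm(x) \, c(k) \, (m_\pm(x,k) - 1),
\]
uniformly in $x\in \mathbb{R}$ and $k\ge 0$. On the support of $\chi_\pm$ we are in the region $\pm x\ge -2$, where the Jost estimates of Lemma \ref{lem:Mestimates} apply. From \eqref{Mestimates1} at $s=0$ and $|c(k)|\lesssim 1$ we get
\[
|a(x,k)| \lesssim \jx^{\beta}\, \jk^{-1}\,\mathcal{W}_\pm^{1}(x) \lesssim \jx^{\beta+1-\gamma} \lesssim 1
\]
under the hypothesis $\gamma\ge\beta+1$. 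For the derivative, the crucial input is the sharp estimate \eqref{Mestimates1'}, which gives $|k|\,|\partial_k m_\pm|\lesssim \mathcal{W}_\pm^{1}(x)$ on $\pm x\ge -1$; combined with $|k\,\partial_k c(k)|\lesssim 1$ this yields
\[
|k|\,|\partial_k a(x,k)| \lesssim \jx^{\beta}\,\mathcal{W}_\pm^{1}(x) \lesssim 1,
\]
again under $\gamma\ge \beta+1$. I expect this to be the only delicate point: the bound \eqref{Mestimates1} at $s=1$ would only give $|k\,\partial_k m_\pm|\lesssim \mathcal{W}_\pm^{2}(x)\lesssim \jx^{2-\gamma}$ and would force $\gamma\ge \beta+2$; it is the refinement \eqref{Mestimates1'} that yields the sharp threshold stated in the corollary.

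With \eqref{hypest} verified for each of the finitely many pieces, Lemma \ref{lemstat} gives a pointwise bound of each piece by $t^{-1/2}\|\tilde h\|_{L^\infty_k}+t^{-3/4}\|\partial_k\tilde h\|_{L^2_k}$, uniformly in $X$, $x$, and $t>0$. Summing over pieces and taking the $L^\infty_x$ norm proves \eqref{regularinfty}.
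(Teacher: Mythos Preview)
Your proof is correct and follows essentially the same approach as the paper: decompose $\mathcal{K}_R$ via \eqref{KR}, complete the square in the phase, and apply Lemma~\ref{lemstat} with amplitude $a(x,k)=\jx^\beta\chi_\pm(x)c(k)(m_\pm(x,\pm k)-1)$. You also correctly identify the key subtlety---that the sharp bound \eqref{Mestimates1'} (rather than \eqref{Mestimates1} with $s=1$) is what yields the optimal threshold $\gamma\ge\beta+1$---which is exactly the point the paper relies on via \eqref{m+2}.
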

	\begin{proof}
		We only  analyze the case $k\geq0$ since the case $k<0$ is similar.
		From the definition of the regular part see \eqref{eq:decomK} and \eqref{decompphi},
		\begin{align}\label{locdecpr12ng0}
			\begin{split}
				& \int_{k\geq0} \mathcal{K}_R(x,k)e^{ik^2 t} \tilde{h}(k)\,dk = J_+(x) + J_-(x),
				\\
				& J_+(x) := 
				\chi_+(x)\int_{k\geq0}e^{ik^2t} T(k) \big( m_+(x,k) - 1) e^{ikx} \tilde{h}(k)\,dk,
				\\
				& J_-(x) := \chi_-(x)\int_{k\geq0} e^{ik^2t} 
				\big((m_-(x,-k) - 1)e^{ikx} + R_-(k)(m_-(x,k)-1)e^{-ikx}\big) \tilde{h}(k)\,dk.
			\end{split}
		\end{align}
		From Lemma \ref{lem:Mestimates} and Lemma \ref{estiTR} we know that 
		\begin{align}
			& \label{m+}
			\jx^\beta|m_\pm(x,k) - 1 | \lesssim \frac{1}{\jk}\jx^\beta\mathcal{W}^1_\pm, \quad x \geq \mp 1,
			\\
			& \label{m+2}
			\jx^\beta| \partial_{k}m_\pm(x,k)| 
			\lesssim \frac{1}{|k|}\jx^\beta\mathcal{W}^{1}_\pm, \quad x \geq  \mp 1,
			\\
			& \label{Tk2}  | \partial_{k} T(k) | + | \partial_{k}R_{+} (k)|  \lesssim \frac{1}{\jk}.
		\end{align}
		We then write $\jx^\beta J_{+}= e^{ -i {x^2  \over 4t}} I_{+} (t,X,x)$ with
		$$I_{+}(t,X,x)= \int_{0}^{+\infty} e^{it (k-X)^2} T(k) \jx^\beta(m_+(x,k)-1) \widetilde h(k)\, dk .$$ where $X=  - {x \over 2 t}$.
		Thanks to \eqref{m+}-\eqref{Tk2}, we can thus use Lemma \ref{lemstat} with $x \in I= \mathbb{R}_{+}$,
		and  $a(x,k) = T(k)\jx^\beta( m_+(x,k)-1)$.
		This yields
		$$ |J_{+}| \lesssim   \frac{1}{\sqrt t} \| \widetilde{h}(t) \|_{L^\infty} 
		+ \frac{1}{t^{\frac{3}{4}}} \| \partial_k \widetilde{h} \|_{L^2}$$
		as desired. The term $J_-$ can be estimated in the same way.
	\end{proof}

	\subsection{Improved Local $L^\infty$ decay}
	When $\hat{h}(k)$ vanishes at $k=0$, one should expect (locally in space)
	better decay rates than the one in \eqref{eq:linearpoinwise0} since, 
	intuitively, this eliminates $0$ velocity particles. 
	Indeed, we have the following:
	
	\begin{lem}\label{lem:freeimp}
		Suppose $\hat{h}\in H^{1}$ and $\hat{h}\left(0\right)=0$,
		then
		\begin{align}\label{freeimpconc}
			\left\Vert \left\langle x\right\rangle^{-1}
			e^{-it\partial_{xx}}h\right\Vert _{L^\infty_x}
			=\left\Vert \jx^{-1}\int e^{ixk+itk^{2}}\hat{h}(k)\,dk\right\Vert _{L^\infty_x}
			\lesssim \frac{1}{|t|^{\frac{3}{4}}} {\big\| \hat{h} \big\|}_{H^{1}}.
		\end{align}
	\end{lem}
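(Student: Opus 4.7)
The plan is to use the asymptotic formula \eqref{eq:linearasy-1} from Lemma \ref{lem:linearest} with the choice of parameters $b=1/4$ and $c=1$, which gives the pointwise decomposition
\[
\bigl(e^{-it\partial_{xx}}h\bigr)(x) = \frac{e^{i|x|^2/(2t)}}{(it)^{1/2}}\,\hat h\!\left(\tfrac{x}{t}\right) + R(t,x),
\qquad |R(t,x)| \lesssim |t|^{-3/4}\,{\|h\|}_{H^{0,1}}.
\]
The remainder is immediately acceptable: by Plancherel ${\|h\|}_{H^{0,1}} = {\|\langle x\rangle h\|}_{L^2_x} \simeq {\|\hat h\|}_{L^2_k} + {\|\partial_k\hat h\|}_{L^2_k} \simeq {\|\hat h\|}_{H^1_k}$, so $\langle x\rangle^{-1}|R(t,x)| \lesssim |t|^{-3/4}{\|\hat h\|}_{H^1}$.

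The point of the assumption $\hat h(0)=0$ is to gain a factor of $\sqrt{|x|/|t|}$ in the principal term. Since $\hat h\in H^1(\mathbb{R})$ is continuous and vanishes at the origin, the fundamental theorem of calculus and Cauchy--Schwarz give
\[
\bigl|\hat h\bigl(\tfrac{x}{t}\bigr)\bigr| = \left|\int_{0}^{x/t}\partial_k\hat h(k)\,dk\right|
  \leq \sqrt{|x|/|t|}\,{\|\partial_k\hat h\|}_{L^2_k}.
\]
Plugging this into the principal term and inserting the weight,
\[
\langle x\rangle^{-1}\, \frac{1}{|t|^{1/2}}\bigl|\hat h(x/t)\bigr|
  \lesssim \frac{\sqrt{|x|}}{\langle x\rangle\,|t|}\,{\|\hat h\|}_{H^1_k}.
\]

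To show the right-hand side is bounded by $|t|^{-3/4}{\|\hat h\|}_{H^1}$, I split according to whether $x$ is small or large relative to $\sqrt{|t|}$. If $|x|\leq |t|^{1/2}$, then $\sqrt{|x|}\leq |t|^{1/4}$ and $\langle x\rangle\geq 1$, so the expression is bounded by $|t|^{1/4}/|t| = |t|^{-3/4}$. If $|x|\geq |t|^{1/2}$ (with $|t|\geq 1$), then $\sqrt{|x|}/\langle x\rangle \lesssim \langle x\rangle^{-1/2} \lesssim |t|^{-1/4}$, and the expression is controlled by $|t|^{-1/4}/|t| = |t|^{-5/4} \leq |t|^{-3/4}$. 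In either case we recover the desired bound and combining with the remainder estimate concludes the proof.

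The argument above is essentially soft: there is no genuine obstacle once one invokes \eqref{eq:linearasy-1} and exploits the vanishing $\hat h(0)=0$ through the Cauchy--Schwarz/Hardy-type bound $|\hat h(y)| \leq \sqrt{|y|}\,{\|\partial_k\hat h\|}_{L^2}$. The only subtlety is the case split at $|x|\sim |t|^{1/2}$, dictated by the fact that the principal term bound $\sqrt{|x|}/|t|$ grows in $|x|$, so the weight $\langle x\rangle^{-1}$ must be used to close the estimate in the large-$|x|$ regime. The bound is for $|t|\gtrsim 1$; for bounded times the statement $\lesssim |t|^{-3/4}$ is vacuous.
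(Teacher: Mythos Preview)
Your proof is correct and takes a genuinely different route from the paper's. The paper argues directly on the oscillatory integral: it splits into $|k|\leq t^{-1/2}$ and $|k|>t^{-1/2}$, uses the same Hardy-type bound $|\hat h(k)|\lesssim |k|^{1/2}\|\partial_k\hat h\|_{L^2}$ on the low-frequency piece, and integrates by parts via $e^{itk^2}=(2itk)^{-1}\partial_k e^{itk^2}$ on the high-frequency piece (the weight $\jx^{-1}$ enters when $\partial_k$ hits $e^{ixk}$). Your argument instead black-boxes the asymptotic expansion \eqref{eq:linearasy-1} and only exploits $\hat h(0)=0$ on the principal term; the split in $|x|$ at scale $|t|^{1/2}$ then plays the role the paper's split in $|k|$ plays. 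Your approach is shorter precisely because Lemma~\ref{lem:linearest} has already absorbed the stationary-phase work, while the paper's direct argument is self-contained and makes transparent where the weight $\jx^{-1}$ is actually spent.

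One small remark: your closing sentence that the small-time case is ``vacuous'' is not quite accurate --- one still needs $\|\jx^{-1}e^{-it\partial_{xx}}h\|_{L^\infty}\lesssim \|\hat h\|_{H^1}$ for $|t|\lesssim 1$, and $\hat h\in H^1$ alone does not give $\hat h\in L^1$. But your own machinery handles it: for $|t|\leq 1$ just bound the principal term by $|t|^{-1/2}\|\hat h\|_{L^\infty}\lesssim |t|^{-1/2}\|\hat h\|_{H^1}\leq |t|^{-3/4}\|\hat h\|_{H^1}$ (Sobolev), without using $\hat h(0)=0$. In any case the paper only applies the lemma for $s\geq 1$.
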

	
	\begin{proof}
		It suffices to consider $t\geq 0$. We split the integral into two pieces
		\begin{align*}
			\int_{-\infty}^{\infty}e^{ixk}e^{itk^{2}}\hat{h}\,dk 
			& = \int_{\left|k\right|\leq\frac{1}{\sqrt{t}}}e^{ixk}e^{itk^{2}}\hat{h}\,dk
			+ \int_{\left|k\right|>\frac{1}{\sqrt{t}}}e^{ixk}e^{itk^{2}}\hat{h}\,dk
			=\text{I}+\text{II}.
		\end{align*}
		Since $\hat{h}(0)=0$, using the fundamental theorem of calculus and H\"older's inequality
		\begin{align}\label{h0}
			\big| \hat{h}(k) \big| = \Big| \int_0^k \partial_{k'}\hat{h}(k')\,dk' \Big|
			\lesssim |k|^{\frac{1}{2}} {\big\| \partial_k \hat{h} \big\|}_{L^2},
		\end{align}
		Then
		\begin{align*}
			|\text{I}| \lesssim 
			\int_{|k|\leq \frac{1}{\sqrt{t}}} |k|^\frac{1}{2}  {\big\| \hat{h} \big\|}_{H^{1}} \, dk
			\lesssim \frac{1}{t^\frac{3}{4}} {\big\| \hat{h} \big\|}_{H^{1}}. 
		\end{align*}
		
		
		Next we consider $\text{II}$ and the case $k<-t^{-\frac{1}{2}}$; the case $k>t^{-\frac{1}{2}}$ is similar. 
		Integrating by parts 
		one has 
		\begin{align*}
			\int_{k<-\frac{1}{\sqrt{t}}} e^{ixk}e^{itk^{2}}\hat{h}\,dk & 
			= \int_{k<-\frac{1}{\sqrt{t}}} \frac{1}{2itk} e^{ixk}\hat{h}(k)\, de^{itk^{2}}
			\\
			& = - \frac{1}{2i\sqrt{t}}\hat{h}\left(-\frac{1}{\sqrt{t}}\right)e^{i}e^{-\frac{1}{\sqrt{t}}ix}
			- \int_{k<-\frac{1}{\sqrt{t}}} e^{itk^{2}}\partial_{k}\left(\frac{1}{2itk} e^{ixk}\hat{h} \right)
			= \
			\text{II}_1 + \text{II}_2.
		\end{align*}
		The term $\text{II}_1$ is handled directly using \eqref{h0}
		For the term $\text{II}_2$
		we can bound the contribution when the differentiation hits $e^{ixk}\hat{h}$ as follows:
		\begin{align}\label{freeimp1}
			\int_{-\infty}^{-\frac{1}{\sqrt{t}}} \left|\frac{1}{k} \partial_{k}\left(e^{ixk}\hat{h}\right) \right|\,dk 
			& \lesssim \langle x \rangle \left(\int_{-\infty}^{-\frac{1}{\sqrt{t}}} 
			\frac{1}{k^{2}}\right)^{\frac{1}{2}} {\big\| \hat{h} \big\|}_{H^{1}}
			\lesssim \langle x \rangle \, t^{\frac{1}{4}} {\big\| \hat{h} \big\|}_{H^{1}}.
		\end{align}
		When the differentiation hits $\frac{1}{k}$, using \eqref{h0} one has
		\begin{align}\label{freeimp2}
			\int_{-\infty}^{-\frac{1}{\sqrt{t}}}\left|\frac{1}{k^{2}}e^{ixk}\hat{h}\right|\,dk 
			& \lesssim {\big\| \hat{h} \big\|}_{H^{1}} \int_{-\infty}^{-\frac{1}{\sqrt{t}}}\left|k\right|^{-\frac{3}{2}}dk
			\lesssim t^{\frac{1}{4}} {\big\| \hat{h} \big\|}_{H^{1}}.
		\end{align}
		Putting together \eqref{freeimp1} and \eqref{freeimp2} we obtain \eqref{freeimpconc}.
	\end{proof}
	
	\begin{rem}\label{remcoeff}
		Given the coefficients $a^\epsilon_\mu$, $\epsilon,\mu \in \{+,-\}$ from \eqref{Ksing+'}-\eqref{Ksing-'} we have
		\begin{align}\label{remcoeffest1}
			{\big\| a^\epsilon_\mu \tilde{f} \big\|}_{L^\infty_k} \lesssim {\big\| \tilde{f} \big\|}_{L^\infty_k},
			\qquad {\big\| a^\epsilon_\mu \tilde{f} \big\|}_{L^2_k} \lesssim {\big\| \tilde{f} \big\|}_{L^2_k}
			\qquad {\big\| a^\epsilon_\mu \tilde{f} \big\|}_{H^1_k} \lesssim {\big\| \tilde{f} \big\|}_{H^1_k}.
		\end{align}
		For the third estimate above we use that $\tilde{f}(0)=0$ and Lemma \ref{estiTR}.
		In particular we have that $a^\epsilon_\mu \tilde{f}$ satisfies the same a priori bounds satisfied by $\tilde{f}$,
		and therefore will enjoy the same decay and local decay estimates.
		More precisely, from \eqref{eq:linearpoinwise0} and \eqref{remcoeffest1} we see that, for $t \geq 0$,
		\begin{equation}\label{remcoeffest2}
			{\big\| e^{-it\partial_{xx}} \widehat{\mathcal{F}}^{-1}(a^\epsilon_\mu \tilde{f}) \big\|}_{L^\infty} 
			\lesssim\frac{1}{\sqrt{t}}{\big\| \tilde{f} \big\|}_{L^\infty}
			+ \frac{1}{t^{\frac{3}{4}}}{\big\| \tilde{f} \big\|}_{H^1},
		\end{equation}
		and, similarly from \eqref{freeimpconc} (recall $\tilde{f}(0)=0$) and \eqref{remcoeffest1}, we have
		\begin{align}\label{remcoeffest3}
			{\big\| \jx^{-1} e^{-it\partial_{xx}} \widehat{\mathcal{F}}^{-1}(a^\epsilon_\mu \tilde{f}) \big\|}_{L^\infty} 
			\lesssim \frac{1}{t^{\frac{3}{4}}}{\big\| \tilde{f} \big\|}_{H^1}.
		\end{align}
	\end{rem}
	
	\medskip

	Next, we consider the perturbed flow $e^{iHt} h$. 
	We do not assume that the potential is generic ($0$ could be a resonance)
	but only the condition $\tilde{h}(0)=0$. See Remark \ref{rem0}.
	
	
	\begin{lem}[Improved $L^\infty$ local decay]\label{lemlocdecinftyng}
		Suppose $\jx^\gamma V(x)\in L^1$ with $\gamma \geq 2$. Assume that $\tilde{h}(0)=0$.
		With the notation \eqref{decompphi}-\eqref{decompphinot}, one has
		\begin{align}\label{locdecinftyng}
			\left\Vert \jx^{-1} \big( e^{iHt} h \big)_A \right\Vert_{L^{\infty}_x}
			= \left\Vert \jx^{-1} \int\mathcal{K}_A(x,k)e^{ik^{2}t}\tilde{h}(k)\,dk\right\Vert_{L^{\infty}_x}
			\lesssim |t|^{-\frac{3}{4}}{\big\| \tilde{h} \big\|}_{H^1_k}, \qquad A \in \{0,S\},
		\end{align}
		and, for $\gamma\geq \beta+2$,
		\begin{align}\label{locdecinftyRng}
			\left\Vert \jx^\beta \big( e^{iHt} h \big)_R \right\Vert_{L^{\infty}_x}
			= \left\Vert \jx^\beta \int\mathcal{K}_R(x,k)e^{ik^{2}t}\tilde{h}(k)\,dk\right\Vert_{L^{\infty}_x}
			\lesssim |t|^{-\frac{3}{4}}{\big\| \tilde{h} \big\|}_{H^1_k}. 
		\end{align}
		As a consequence, for $\gamma \geq 2$ and $\zeta \in [0,1]$ we have
		\begin{align}\label{eq:interp}
			\left\Vert \jx^{-\zeta}\left(e^{-itH}h\right)_{A}\right\Vert_{L^{\infty}}
			\lesssim t^{-\left(\frac{1}{2}+\frac{\zeta}{4}\right)}{\big\| \tilde{h} \big\|}_{H^1_k},
			\qquad A \in \{0,S,R\}.
		\end{align}
	\end{lem}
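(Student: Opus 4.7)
The plan is to treat the three parts $A = S$, $A = R$, and $A = 0$ separately, and then derive \eqref{eq:interp} by interpolation with the standard pointwise decay from Lemma \ref{lem:pointwiseH}.

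\textbf{Singular part ($A = S$).} Using the expressions \eqref{Ksing+'}-\eqref{Ksing-'}, I will write
\[
\chi_\pm(x)\int \mathcal{K}_\pm(x,k) e^{ik^2 t}\tilde{h}(k)\,dk
= \chi_\pm(x) \sum_{\mu \in \{+,-\}} \int e^{i\mu k x} e^{ik^2 t} a_\pm^\mu(k) \tilde{h}(k)\,dk,
\]
which (after a sign change in $k$ in the $\mu = -$ pieces) is a finite sum of free Schr\"odinger flows applied to functions whose (flat) Fourier transforms are the products $a_\pm^\mu(k) \tilde{h}(k)$. By Lemma \ref{estiTR} and the hypothesis $\tilde{h}(0) = 0$, these products lie in $H^1_k$ with norms bounded by $\|\tilde h\|_{H^1_k}$, and they still vanish at $k=0$. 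Thus \eqref{remcoeffest3} (equivalently, the free improved local decay of Lemma \ref{lem:freeimp}) applies and yields the bound \eqref{locdecinftyng} for $A = S$.

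\textbf{Regular part ($A = R$).} Here the argument is parallel to Corollary \ref{cor:regularLinfty} but I need to sharpen $t^{-1/2}$ to $t^{-3/4}$. I will split
\[
\int \mathcal{K}_R(x,k) e^{ik^2 t} \tilde{h}(k)\,dk = \int_{|k|\leq t^{-1/2}} + \int_{|k| > t^{-1/2}}.
\]
For the low-frequency piece, Lemma \ref{lem:Kregular} gives $\langle x\rangle^\beta |\mathcal{K}_R(x,k)| \lesssim \langle k\rangle^{-1}$ (provided $\gamma \geq \beta + 2$), and $|\tilde h(k)| \lesssim |k|^{1/2}\|\partial_k\tilde h\|_{L^2}$ from $\tilde h(0)=0$ and Cauchy--Schwarz, so the resulting integral is bounded by $\int_{|k|\leq t^{-1/2}}|k|^{1/2}\,dk \cdot \|\tilde h\|_{H^1}\lesssim t^{-3/4}\|\tilde h\|_{H^1}$. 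For the high-frequency piece I will mimic the integration by parts of Lemma \ref{lem:freeimp}, writing $e^{ik^2 t} = (2itk)^{-1}\partial_k e^{ik^2 t}$ and integrating by parts. The boundary term at $|k| = t^{-1/2}$ is controlled as before, and the bulk term produces either $k^{-2}\tilde h$ (bounded by $t^{1/4}\|\tilde h\|_{H^1}$ via the pointwise bound on $\tilde h$ near $0$) or $k^{-1}\partial_k(\mathcal{K}_R\tilde h)$. The factor $k^{-1}\partial_k \mathcal{K}_R$ remains $O(\langle x\rangle^{-\beta})$ by the second estimate in Lemma \ref{lem:Kregular} combined with the $\langle k\rangle^{-1}$ factor from \eqref{KRdxi}, and Cauchy--Schwarz in $k$ over $\{|k|>t^{-1/2}\}$ gives the required $t^{-3/4}$.

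\textbf{Full flow ($A = 0$) and interpolation.} Since $\mathcal{K}_0 = \mathcal{K}_S + \mathcal{K}_R$, combining the two previous bounds with the weight $\langle x\rangle^{-1}\leq \langle x\rangle^{\beta}$ (taking $\beta = -1$ and noting the hypothesis $\gamma\geq 2$ permits this) yields \eqref{locdecinftyng} for $A=0$. Finally, for \eqref{eq:interp}, I combine the standard decay
\[
{\|(e^{-itH}h)_A\|}_{L^\infty} \lesssim t^{-1/2}\|\tilde h\|_{H^1_k}
\]
(immediate from Lemma \ref{lem:pointwiseH} for $A=0$ and reproved by the same plane-wave/Lemma \ref{lemstat} argument for $A=S,R$) with the bounds already proven, via the elementary pointwise estimate $\langle x\rangle^{-\zeta}|f(x)| \leq |f(x)|^{1-\zeta}(\langle x\rangle^{-1}|f(x)|)^\zeta$, giving $t^{-(1-\zeta)/2 - 3\zeta/4} = t^{-(1/2+\zeta/4)}$.

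\textbf{Main obstacle.} The delicate point is the high-frequency integration by parts for $A = R$: I must verify that the $k$-derivative of $\mathcal{K}_R$ retains enough spatial decay to absorb the weight $\langle x\rangle^\beta$ uniformly in $k$, which is precisely what the derivative estimate \eqref{KRdxi} in Lemma \ref{lem:Kregular} provides once $\gamma \geq \beta + 2$. Outside of this bookkeeping, the argument is a direct adaptation of the free-flow proof in Lemma \ref{lem:freeimp} exploiting the vanishing $\tilde h(0)=0$.
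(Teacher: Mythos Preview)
Your proposal is correct and follows essentially the same strategy as the paper: reduce each piece to the free-flow improved local decay of Lemma~\ref{lem:freeimp}, using $\tilde h(0)=0$ and the $H^1_k$ bounds on the relevant symbols from Lemmas~\ref{lem:Mestimates}, \ref{estiTR}, \ref{lem:Kregular}. The only difference is one of packaging: for $A=R$ the paper applies Lemma~\ref{lem:freeimp} as a black box with $\chi_+(x)\langle x\rangle^{\beta+1}T(k)(m_+(x,k)-1)\tilde h(k)$ playing the role of $\hat h$ (checking its $H^1_k$ norm is bounded uniformly in $x$), whereas you unpack the low/high frequency split and integration by parts by hand; and for $A=0$ the paper treats $\mathcal{K}$ directly via the decomposition \eqref{locdecpr11ng} rather than summing the $S$ and $R$ contributions.
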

	
	
	\begin{rem}
		Under the assumptions that the potential $V$ is generic,
		(and with slightly stronger decay assumptions), 
		one can get faster decay in the $L^\infty$ norm at the rate $|t|^{-1}$.
		See Lemma \ref{lemlocdecinfty}.We also refer Egorova-Kopylova-Marchenko-Teschl \cite{EKMT}, Goldberg \cite{Gold}, Krieger-Schlag \cite{KS}, Schlag \cite{Sch} where the authors prove localized improved $L^1\rightarrow L^\infty$ decay. 
	\end{rem}

	\begin{proof}[Proof of Lemma \ref{lemlocdecinftyng}]
		Recall the explicit formula for the flow:
		\begin{align}\label{formula}
			\Big( e^{it\left(-\partial_{xx}+V\right)}h \Big)_A
			= 
			\int\mathcal{K}_A(x,k)e^{ik^{2}t} \tilde{h}(k)\,dk, \qquad A \in \{0,S,R\}.
		\end{align}
		Let us first look at the case $A=0$, that is, we consider the full evolution $e^{it\left(-\partial_{xx}+V\right)}h$.
		It suffices to prove the desired estimate for the contribution with $k\geq0$;
		for the case with $k<0$ the analysis is similar.
		Using the definition of $\mathcal{K}$, see \eqref{matKk>0}, we have
		\begin{align}\label{locdecpr11ng} 
			\begin{split}
				& \sqrt{2\pi}\int_{k\geq0} \mathcal{K}(x,k)e^{ik^2 t} \tilde{h}(k)\,dk = I_+(x) + I_-(x),
				\\
				& I_+(x) := 
				\chi_+(x)\int_{k\geq0}e^{ik^2t} T(k) m_+(x,k)e^{ikx} \tilde{h}(k)\,dk,
				\\
				& I_-(x) := \chi_-(x)\int_{k\geq0} e^{ik^2t} \big(m_-(x,-k)e^{ikx} + R_-(k)m_-(x,k)e^{-ikx}\big) \tilde{h}(k)\,dk,
			\end{split}
		\end{align}
		
		Let us first look at $I_+$.
		Since $\tilde{h}\left(0\right)=0$, we can apply Lemma \ref{lem:freeimp} 
		with the expression $\chi_+(x)T(k)m_+(x,k)\tilde{h}(k)$ playing the role of $\hat{h}$, see \eqref{freeimpconc}.
		It then suffices to bound, uniformly in $x$,
		\[
		{\big\| \chi_+(x)T(k)m_+(x,k)\tilde{h}(k) \big\|}_{H^1_k} \lesssim {\big\| \tilde{h} \big\|}_{H^1_k}.
		\]
		This follows from the estimates for the Jost functions in Lemma \ref{lem:Mestimates},
		and the bounds on transmission coefficient in Lemma \ref{estiTR}.
		For the integral $I_-$ we can similarly apply Lemma \ref{lem:freeimp} to 
		$\chi_-(x)m_-(x,-k)\tilde{h}(k)$ and $\chi_-(x)R_-(k)m_-(x,k)\tilde{h}(k)$ in place of $\hat{h}$.
		
		The bound \eqref{locdecinftyng} for $A=S$ can be obtained in the same way (it is in fact easier since there
		is no $m_+$ to deal with). It can also be obtained by difference after we establish \eqref{locdecinftyRng},
		to which we now turn.
		
		We write
		\begin{align}\label{locdecpr12ng} 
			\begin{split}
				& \int_{k\geq0} \mathcal{K}_R(x,k)e^{ik^2 t} \tilde{h}(k)\,dk = J_+(x) + J_-(x),
			\end{split}
		\end{align}
		as in \eqref{locdecpr12ng0}.
		For $J_+$ we can use the same argument used for \eqref{locdecpr11ng} above 
		with the symbol $\jx^{\beta+1} (m_+(x,k)-1)$ instead of $m_+(x,k)$:
		from Lemma \ref{lem:Mestimates} and Lemma \ref{estiTR} we have
		\[
		{\big\| \chi_+(x)\jx^{\beta+1} T(k)(m_+(x,k)-1)\tilde{h}(k) \big\|}_{H^1_k} \lesssim {\big\| \tilde{h} \big\|}_{H^1_k}.
		\]
		so that applying \eqref{freeimpconc} to $J_+$ gives a bound consistent with \eqref{locdecinftyRng}. 
		The term $J_-$ can be treated in the same way.
		
		\eqref{eq:interp} follows by interpolating between the standard estimate \eqref{eq:linearpoinwiseH}
		and the localized decay estimate \eqref{locdecinftyng}.
	\end{proof}


	\medskip
	\subsection{Local smoothing estimates}
	First of all, with the definition of the regular part, see \eqref{eq:decomK} and \eqref{decompphi}, 
	applying the PDOs estimate from Lemma \ref{lem:pseeasy} 
	we immediately obtain the following $L^2$ estimates for the regular part of the Schr\"odinger flow.
	
	\begin{lem}\label{lem:regularL2}
		Suppose $\jx^\gamma V(x)\in L^1$ for some $\gamma \geq 3/2 + \beta$, with $\beta \geq 0$.
		Then
		\begin{align}\label{regularL2}
			\left\Vert \jx^\beta \big( e^{iHt} h \big)_R \right\Vert_{L^{2}_x}
			\lesssim {\big\| \tilde{h} \big\|}_{L^2_k}.
		\end{align}
	\end{lem}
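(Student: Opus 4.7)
The plan is to reduce the bound to a direct application of Lemma~\ref{lem:pseeasy} via the explicit formula \eqref{KR} for $\mathcal{K}_R$. By \eqref{decompphi} applied with $\phi = e^{iHt}h$ and the identity $\wtF[e^{iHt}h](k) = e^{ik^2 t}\tilde h(k)$, we have
\[
\big(e^{iHt}h\big)_R(x) = \frac{1}{\sqrt{2\pi}}\int \mathcal{K}_R(x,k)\,e^{ik^2 t}\tilde h(k)\,dk.
\]
Setting $g(k) := e^{ik^2 t}\tilde h(k)$, for which $\|g\|_{L^2_k} = \|\tilde h\|_{L^2_k}$, it suffices to prove the $t$-independent bound
\[
\Big\|\jx^{\beta}\int \mathcal{K}_R(x,k)\,g(k)\,dk\Big\|_{L^2_x}\lesssim \|g\|_{L^2_k}.
\]

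Inserting \eqref{KR} and splitting into $k\geq 0$ and $k<0$, the integral becomes a finite sum of terms, each of the schematic form
\[
\chi_{\pm}(x)\int e^{i\varepsilon kx}\big(m_{\pm}(x,\sigma k)-1\big)\,c(k)g(k)\,dk,
\]
where $\varepsilon,\sigma\in\{+1,-1\}$ and $c(k)\in\{T(\pm k),R_{\pm}(\pm k),1\}$ is uniformly bounded on $\mathbb{R}$ by Lemma~\ref{estiTR} and \eqref{TRid}. Multiplication by $c(k)$ preserves $L^2_k$ and the change of variable $k\mapsto\sigma k$ is an isometry, so each summand is, up to the support mismatch between the smooth cutoff $\chi_{\pm}$ and the sharp indicator $\mathrm{1}_{x\gtrless\mp 1}$ used in Lemma~\ref{lem:pseeasy}, exactly of the form controlled by \eqref{eq:mPDO+} (for the $m_+$ pieces) or its $m_-$-analogue (for the $m_-$ pieces). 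Under the hypothesis $\gamma\geq\beta+3/2$, that estimate yields the required bound with constant $\|c(k)g(k)\|_{L^2_k}\lesssim \|\tilde h\|_{L^2_k}$.

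The only technical point is reconciling the smooth cutoffs $\chi_{\pm}$ with the sharp indicators of Lemma~\ref{lem:pseeasy}. This is handled by writing, for instance, $\chi_+ = \chi_+\mathrm{1}_{x\geq -1} + \chi_+\mathrm{1}_{-2\leq x\leq -1}$: on the first piece \eqref{eq:mPDO+} applies directly, while on the compact region $-2\leq x\leq -1$ the second estimate of \eqref{Mestimates1} gives $|m_+(x,k)-1|\lesssim\jk^{-1}\jx\lesssim\jk^{-1}$, so Cauchy--Schwarz in $k$ gives a pointwise-in-$x$ bound of order $\|\jk^{-1}\|_{L^2_k}\|g\|_{L^2_k}\lesssim \|\tilde h\|_{L^2_k}$, which is harmless when integrated over the compact $x$-window. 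Summing the resulting six terms completes the proof. There is no serious obstacle — the estimate is essentially a packaging of Lemma~\ref{lem:pseeasy}; the conceptual point worth emphasizing is that the spatial localization of $\mathcal{K}_R$ fully absorbs the weight $\jx^\beta$, so the bound is uniform in $t$ and requires no dispersive input.
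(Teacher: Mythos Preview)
Your proof is correct and follows essentially the same approach as the paper: both reduce the estimate to a direct application of \eqref{eq:mPDO+} from Lemma~\ref{lem:pseeasy} using the explicit formula \eqref{KR} for $\mathcal{K}_R$. You simply spell out more of the bookkeeping (the coefficients $c(k)$, the sign changes, and the compact-region reconciliation of $\chi_\pm$ with the sharp indicators), whereas the paper's proof is a one-line reference to \eqref{eq:mPDO+} together with the remark that $e^{i\lambda x}$ can be replaced by $e^{-i\lambda x}$ via complex conjugation.
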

	
	\begin{proof}
		One just needs to apply \eqref{eq:mPDO+} from Lemma \ref{lem:pseeasy}
		to the definition \eqref{decompphi} and \eqref{KR}. 
		Note that we can replace $e^{i\lambda x}$ with $e^{-i\lambda x}$ in \eqref{eq:mPDO+} by taking complex conjugates.
	\end{proof}

	Next we will show some improved $L^2$ local/smoothing decays estimate for the (derivative) of the linear evolution.
	We will present separately the generic and non-generic cases for which we have different time-decay rates.
	
	\medskip
	\subsubsection{The non-generic case} 
	First of all, we do not assume that the potential is generic but impose that $\tilde{h}=0$. 
	In this case we get a decay rate which is similar to that in Lemma \ref{lemlocdecinftyng}.
	
	\begin{lem}\label{lem:locl2ng}
		Suppose $\jx^{\gamma}V(x)\in L^1$ with $\gamma\geq2$ and $\tilde{h}\left(0\right)=0$.
		Then, for $t\geq0$,
		\begin{equation}\label{eq:locl2ngf}
			\left\Vert \jx^{-1}t\partial_x\left(e^{iHt}h\right)_A\right\Vert _{L^{2}}
			\lesssim t^{\frac{1}{4}} {\big\| \tilde{h} \big\|}_{H^1}\qquad A \in \{0,S\},
		\end{equation}
		and for $\gamma>\beta+5/2$, $\beta \geq 0$,
		\begin{equation}\label{eq:loc2ngR}
			\left\Vert \jx^{\beta}t\partial_x\left(e^{iHt}h\right)_{R}\right\Vert_{L^{2}}
			\lesssim t^{\frac{1}{4}} {\big\| \tilde{h} \big\|}_{H^1}.
		\end{equation}
	\end{lem}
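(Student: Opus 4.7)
The plan is to use the decomposition $\mathcal{K} = \mathcal{K}_S + \mathcal{K}_R$ from Section~\ref{subsec:DecK} and treat the singular and regular parts separately; the case $A=0$ then follows by summation, noting that the $\jx^{-1}$ weight in \eqref{eq:locl2ngf} corresponds to $\beta = -1$ in \eqref{eq:loc2ngR}, so the requirement $\gamma > \beta + 5/2 = 3/2$ is implied by $\gamma \geq 2$.

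For the singular part ($A=S$) I would use \eqref{Ksing+'}--\eqref{Ksing-'} to write
\[ \big(e^{iHt}h\big)_S(x) = \sum_{\mu,\epsilon} \chi_\mu(x)\, f^\epsilon_\mu(x), \qquad f^\epsilon_\mu(x) := \int a^\epsilon_\mu(k)\, e^{i\epsilon k x}\, e^{ik^2 t}\,\tilde{h}(k)\,dk. \]
Differentiating in $x$ produces two types of contributions. The term $\chi'_\mu \cdot f^\epsilon_\mu$ is spatially compactly supported; by Remark~\ref{remcoeff}, $a^\epsilon_\mu\tilde{h}$ still vanishes at $0$ and sits in $H^1_k$, so the improved local decay \eqref{remcoeffest3} yields $\|f^\epsilon_\mu\|_{L^\infty(\mathrm{supp}\,\chi'_\mu)} \lesssim t^{-3/4}\|\tilde{h}\|_{H^1}$, which gives an $L^2_x$ bound of $t^{1/4}\|\tilde{h}\|_{H^1}$ after multiplication by $t$. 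The term $\chi_\mu \cdot \partial_x f^\epsilon_\mu$ I would handle by writing $tk\,e^{ik^2 t} = (2i)^{-1}\partial_k e^{ik^2 t}$ and integrating by parts in $k$, generating three pieces with integrands $\partial_k a^\epsilon_\mu\cdot\tilde{h}$, $x\cdot a^\epsilon_\mu\tilde{h}$, and $a^\epsilon_\mu\tilde{h}'$; each is bounded in $L^2_x$ by Plancherel, with the stray factor of $x$ absorbed by the $\jx^{-1}$ weight, yielding a uniform-in-$t$ bound by $\|\tilde{h}\|_{H^1}$.

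For the regular part ($A=R$) I would exploit the $x$-localization of $\partial_x \mathcal{K}_R$ from Lemma~\ref{lem:Kregular}, namely $|\partial_x\partial_k^s\mathcal{K}_R(x,k)| \lesssim \mathcal{W}_\pm^{s+1}(x) \lesssim \jx^{s+1-\gamma}$ for $s=0,1$. I would then split $\int \partial_x\mathcal{K}_R\,e^{ik^2 t}\tilde{h}(k)\,dk$ into the regions $|k|\leq t^{-1/2}$ and $|k|>t^{-1/2}$. In the low-frequency regime the vanishing $\tilde{h}(0)=0$ gives $|\tilde{h}(k)| \lesssim |k|^{1/2}\|\tilde{h}\|_{H^1}$, so the pointwise bound $\int_{|k|\leq t^{-1/2}} |k|^{1/2}\,dk \lesssim t^{-3/4}$ produces (after multiplication by $t$) a contribution of size $t^{1/4}\jx^{1-\gamma}\|\tilde{h}\|_{H^1}$. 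In the high-frequency regime I would apply $t\,e^{ik^2 t} = (2ik)^{-1}\partial_k e^{ik^2 t}$ and integrate by parts once in $k$: the boundary terms at $k = \pm t^{-1/2}$ are controlled using $|\tilde{h}(\pm t^{-1/2})| \lesssim t^{-1/4}\|\tilde{h}\|_{H^1}$, while the three interior pieces, with integrands $\frac{\partial_k\partial_x\mathcal{K}_R}{k}\tilde{h}$, $\frac{\partial_x\mathcal{K}_R}{k}\tilde{h}'$, $\frac{\partial_x\mathcal{K}_R}{k^2}\tilde{h}$, are estimated pointwise in $x$ by Cauchy-Schwarz in $k$, using $\|\mathbf{1}_{|k|>t^{-1/2}}/k\|_{L^2_k} \lesssim t^{1/4}$ for the first two and $\int_{|k|>t^{-1/2}}|k|^{-3/2}\,dk \lesssim t^{1/4}$ for the last. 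Taking the $\jx^\beta$-weighted $L^2_x$ norm, the most restrictive condition arises from the $\partial_k\partial_x\mathcal{K}_R$ term (decaying only like $\jx^{2-\gamma}$) and yields exactly $\gamma > \beta + 5/2$.

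The main delicate point is the interplay between the $1/k$ and $1/k^2$ singularities produced by the identity $t\,e^{ik^2 t} = (2ik)^{-1}\partial_k e^{ik^2 t}$ and the available spatial decay of $\partial_x\mathcal{K}_R$ and its $k$-derivative. This forces the low/high frequency split at the critical scale $t^{-1/2}$, and makes essential use of $\tilde{h}(0)=0$: first, via the gain $|\tilde{h}(k)|\lesssim|k|^{1/2}\|\tilde{h}\|_{H^1}$ to control the low-frequency region and the $1/k^2$ interior term, and, second, to legitimize the improved local decay \eqref{remcoeffest3} used for the cutoff-derivative term in the singular part.
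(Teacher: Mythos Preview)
Your proof is correct; both parts work as stated. The singular-part argument is essentially the paper's, just phrased via the coefficient representation \eqref{Ksing+'}--\eqref{Ksing-'} rather than the Jost-function form \eqref{matKk>0}--\eqref{matKk<0}: in either language one writes $tk\,e^{ik^2t}=(2i)^{-1}\partial_k e^{ik^2t}$, integrates by parts in $k$, and handles the cutoff-derivative term via the improved local decay of Lemma~\ref{lem:freeimp}/Remark~\ref{remcoeff}. The paper proves $A=0$ directly and deduces $A=S$ as the simpler subcase; you do $A=S$ and $A=R$ and sum, which is equally valid.

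For the regular part your route differs from the paper's. The paper keeps the Jost-function structure of $\mathcal{K}_R$, integrates by parts in $k$ once (no frequency split), and controls the resulting terms by the PDO $L^2$-bounds of Section~\ref{sec:JostSpec} (specifically \eqref{eq:mPDO+} and \eqref{eq:pdomk}); only the $\partial_x m_+$ piece is handled by the pointwise decay of Lemma~\ref{lem:freeimp}, and this is where the $t^{1/4}$ loss enters. Your argument instead uses only the pointwise bounds \eqref{KRdxi}--\eqref{KRdxidx} of Lemma~\ref{lem:Kregular}, combined with the low/high frequency split at $|k|\sim t^{-1/2}$ and Cauchy--Schwarz in $k$, mirroring the proof of Lemma~\ref{lem:freeimp}. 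This is more elementary and bypasses the PDO machinery entirely, at the cost of picking up the $t^{1/4}$ factor on every term rather than just one; since the target is $t^{1/4}$ anyway, nothing is lost. Both approaches identify the same restriction $\gamma>\beta+5/2$, arising in your argument from $\jx^{\beta+2-\gamma}\in L^2_x$ and in the paper's from the hypothesis of Lemma~\ref{lem:m_k}.
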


	\begin{proof}[Proof of \eqref{eq:locl2ngf}]
		Our starting point is the same as the proof of Lemma \ref{lemlocdecinftyng}.
		We look at the formula \eqref{formula} and analyze the case $A=0$; the proof in the case $A=S$ is simpler.
		As before, it suffices to check the contribution for $k\geq0$.
		We split the relevant integral as the sum of $I_+$ and $I_-$ as in \eqref{locdecpr11ng}, 
		and look first at the contributions coming from $t \partial_x I_+$, that is
		\begin{align}
			\label{ng1}
			t\partial_x I_+(x) & = t \chi_+(x)\int_{k\geq0} ikT(k)e^{ikx}m_+(x,k)e^{ik^{2}t}\tilde{h}(k)\,dk
			\\
			\label{ng2}
			& + t \chi_+(x)\int_{k\geq0}T(k)e^{ikx}\partial_x m_+(x,k)e^{ik^{2}t}\tilde{h}(k)\,dk
			\\
			\label{ng3}
			& + t \, \partial_x\chi_+(x)\int_{k\geq0} T(k)e^{ikx}m_+(x,k)e^{ik^{2}t}\tilde{h}(k)\,dk.
		\end{align}
		
		For \eqref{ng1} we integrate by parts using $\tilde{h}\left(0\right)=0$:
		\begin{align}
			\nonumber
			2t \int_{k\geq0} ikT(k)e^{ikx}m_+(x,k)e^{ik^{2}t}\tilde{h}(k)\,dk & 
			= \int_{k\geq0}T(k)e^{ikx}m_+(x,k)\tilde{h}(k)\,de^{ik^{2}t}
			\\
			\label{ng11}
			& =-\int_{k\geq0}e^{ik^{2}t}ixe^{ikx}T(k)m_+(x,k)\tilde{h}(k)\,dk
			\\
			\label{ng12}
			& -\int_{k\geq0}e^{ik^{2}t}e^{ikx}T(k)\partial_{k}m_+(x,k)\tilde{h}(k)\,dk
			\\
			\label{ng13}
			& -\int_{k\geq0}e^{ik^{2}t}e^{ikx}m_+(x,k)\partial_{k}\left(T(k)\tilde{h}(k)\right)\,dk.
		\end{align}
		We can estimate the first and last terms above by applying directly \eqref{eq:pseeasy} 
		and using $|T| + |\partial_kT| \lesssim 1$ (see Lemma \ref{estiTR}):
		\begin{equation}\label{eq:locall2ng1}
			\left\Vert \jx^{-1}\chi_+(x)
			\int_{k\geq0}e^{ik^{2}t}ixe^{ikx}T(k)m_+(x,k)\tilde{h}(k)\,dk\right\Vert_{L^{2}}
			\lesssim {\big\| \tilde{h} \big\|}_{L^2},
		\end{equation}
		and
		\begin{equation}\label{eq:locl2ng3}
			\left\Vert 
			\chi_+(x)\int_{k\geq0}e^{ik^{2}t}e^{ikx}m_+(x,k)
			\partial_{k}\left(T(k)\tilde{h}(k)\right)\,dk\right\Vert _{L^{2}}
			\lesssim {\big\| \tilde{h} \big\|}_{H^1}.
		\end{equation}
		For the remaining term we use the $L^2$ estimate \eqref{eq:pdomk} from Lemma \ref{lem:m_k} to conclude that
		\begin{equation}\label{eq:locl2ng2}
			\left\Vert \jx^{-1}\chi_+(x)\int_{k\geq0}e^{ik^{2}t}e^{ikx}T(k)
			\partial_{k}m_+(x,k)\tilde{h}(k)\,dk\right\Vert_{L^{2}} 
			\lesssim{\big\| \tilde{h} \big\|}_{L^2}
		\end{equation}

		For the term in \eqref{ng2}, using the localized pointwise
		decay with $\tilde{h}\left(0\right)=0$, see Lemma \ref{lem:freeimp}, one has that
		\begin{equation}\label{eq:locl2ng4}
			\left| \chi_+(x) \, t \,\int_{k\geq0}T(k)e^{ikx}\partial_xm_+(x,k)e^{ik^{2}t}\tilde{h}(k)\,dk\right|
			\lesssim\left|t\right|\left|t\right|^{-\frac{3}{4}}\sup_{x\geq-1}
			\left\Vert\jx\partial_xm_+(x,k)\tilde{h}(k)\right\Vert_{H_{k}^{1}}.
		\end{equation}
		Note that for $j=0,\,1$, by Lemma \ref{lem:Mestimates}, we have, for $x\geq-1$,
		\[
		\left|\jx\chi_+(x)\partial_{k}^{j}\partial_xm_+(x,k)\right|\lesssim\jx\int_x^\infty 
		\jy V(y)\,dy\lesssim\jx^{-\gamma+2}.
		\]
		Hence, for any $\alpha \geq 0$ and $\gamma > 3/2 + \alpha$, we have
		\begin{equation}\label{eq:loc2ng5}
			\left\Vert \jx^{-1+\alpha}\chi_+(x)\int_{k\geq0}tiT(k)e^{ikx}\partial_xm_+(x,k)e^{ik^{2}t}\tilde{h}(k)\,dk\right\Vert_{L^{2}}
			\lesssim\left|t\right|^{\frac{1}{4}} {\big\| \tilde{h} \big\|}_{H^1},
		\end{equation}
		which, for $\alpha =0$, is the bound we want.
		The term \eqref{ng3} can also be estimated directly using Lemma \ref{lem:freeimp}.
		We then have obtained the desired bound for for $I_+$
		
		The estimate for $\jx^{-1}t\partial_xI_-$, see the definition in \eqref{locdecpr11ng}, can be done in the same way
		using the properties of $m_{-}(x,k)$ for $x \leq 1$, and $|R_{-}|+|\partial_kR_-|\lesssim 1$.
		

		\medskip
		\noindent
		{\it Proof of \eqref{eq:loc2ngR}}.
		To prove the estimate on the regular part we begin as in \eqref{locdecpr12ng0}, and write
		again
		\begin{align}\label{locdecpr12ng'}
			\begin{split}
				& \int_{k\geq0} \mathcal{K}_R(x,k)e^{ik^2 t} \tilde{h}(k)\,dk = J_+(x) + J_-(x),
			\end{split}
		\end{align}
		where, for ease of reference we recall
		\begin{align}
			\label{J_+}
			& J_+(x) := 
			\chi_+(x)\int_{k\geq0}e^{ik^2t} T(k) \big( m_+(x,k) - 1) e^{ikx} \tilde{h}(k)\,dk.
		\end{align}
		The estimate for $J_-$ can be done in the same way as the one for $J_+$ so we disregard it.
		
		We first look at $t \partial_x J_+$ and proceed in the same manner as in the proof of \eqref{eq:locl2ngf} above.
		Applying $\partial_x$ we obtain three expressions like \eqref{ng1}-\eqref{ng3} with $m_+-1$ instead of $m_+$.
		
		Integrating by parts in $k$ in the first of these three terms (the one where $\partial_x$ hits $e^{ixk}$)
		we obtain expressions like \eqref{ng11}-\eqref{ng13}, again with $m_+-1$ instead of $m_+$
		Using \eqref{eq:mPDO+} from Lemma \ref{lem:pseeasy}, for $x\geq-1$, under the condition $\gamma>\beta+5/2$,
		we have
		\begin{equation}\label{eq:locall2ng1'}
			\left\Vert \jx^\beta\chi_+(x)
			\int_{k\geq0}e^{ik^{2}t}ixe^{ikx}T(k)(m_+(x,k)-1)\tilde{h}(k)\,dk\right\Vert_{L^{2}}
			\lesssim{\big\| \tilde{h} \big\|}_{L^2},
		\end{equation}
		and by the regularity of $T(k)$ and \eqref{eq:mPDO+},
		\begin{equation}\label{eq:locl2ng3'}
			\left\Vert \jx^\beta\chi_+(x)\int_{k\geq0}
			e^{ik^{2}t}e^{ikx}(m_+(x,k)-1)\partial_{k}\left(T(k)\tilde{h}(k)\right)\,dk\right\Vert_{L^{2}}
			\lesssim {\big\| \tilde{h} \big\|}_{H^1}.
		\end{equation}
		Applying the estimate \eqref{eq:pdomk} from Lemma \ref{lem:m_k} with $\gamma > \beta + 5/2$, we have
		\begin{equation}\label{eq:locl2ng2'}
			\left\Vert \jx^\beta \chi_+(x)
			\int_{k\geq0}e^{ik^{2}t}e^{ikx}T(k)\partial_{k}m_+(x,k)\tilde{h}(k)\,dk\right\Vert_{L^{2}}
			\lesssim{\big\| \tilde{h} \big\|}_{L^2}
		\end{equation}
		
		The term where $\partial_x$ hits $m_+-1$ in \eqref{J_+} is exactly the same as \eqref{ng12},
		and \eqref{eq:loc2ng5} with $\alpha = \beta + 1$ gives us the desired bound.
		The term like \eqref{ng3} with $m_+-1$ instead of $m_+$ can also be estimated identically (since $\partial_x\chi_+$
		is compactly supported).
		This concludes the proof.
		%
		%
	\end{proof}

	\medskip
	\subsubsection{The generic case}
	We now look at the generic case. The key difference in this case is that we can use the vanishing
	of $T(k)$ and $R_\pm(k) + 1$ at $k=0$, see Lemma \ref{lem:estiTRTaylor}, to obtain faster time decay rates.
	Eventually this will allow us to carry out the nonlinear analysis under weaker assumption on $V$
	than in the non-generic case.

	\begin{lem}[Improved $L^2$ local decay]\label{lem:localEn}
		Assuming $\jx^\gamma V(x)\in L^1$, $\gamma\geq2$,
		we have
		\begin{align}
			\label{locdecL2}
			{\big\| \jx^{-1} \partial_x
				\big(e^{it(-\partial_{xx}+V)}h\big)_A \big\|}_{L^2_x}\lesssim |t|^{-1} {\big\| \tilde{h} \big\|}_{H^1_k},
			\qquad A \in \{0,S\},
		\end{align}
		and, for $\gamma>\beta+5/2$,
		\begin{align}
			\label{locdecL2R}
			\big{\| \jx^\beta \partial_x
				\big(e^{it(-\partial_{xx}+V)}h\big)_R \big\|}_{L^2_x} \lesssim |t|^{-1} {\big\| \tilde{h} \big\|}_{H^1_k}.
		\end{align}
	\end{lem}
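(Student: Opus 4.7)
The plan is to refine the proof of Lemma \ref{lem:locl2ng}, using the vanishing of $T(k)$ and $1+R_\pm(k)$ at $k=0$ from Lemma \ref{lem:estiTRTaylor} to upgrade the decay rate $|t|^{-3/4}$ in the non-generic case to $|t|^{-1}$ in the generic case. As in that proof, by the symmetric analysis for $k\le 0$, it suffices to control $t\partial_x I_+$ and $t\partial_x I_-$ from \eqref{locdecpr11ng} in the weighted $L^2$ norm appearing on the left of \eqref{locdecL2}, showing it is bounded (rather than $O(|t|^{1/4})$ as in the non-generic case).

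For $\partial_x I_+$, apply $\partial_x$ to obtain the same three terms \eqref{ng1}, \eqref{ng2}, \eqref{ng3} as before. The key observation is that, by Lemma \ref{lem:estiTRTaylor}, in the generic case $T(k)/k$ extends to a smooth bounded function on $\R$ with bounded derivative. Therefore, in the analogue of \eqref{ng1}, instead of using $\tilde h(0)=0$ to absorb the factor $k$, one rewrites $ik T(k) e^{ik^2 t} = \tfrac{1}{2t}T(k)k^{-1}\partial_k(e^{ik^2 t})$ and integrates by parts in $k$ without any boundary or singularity issue. This yields three subterms in direct analogy with \eqref{ng11}--\eqref{ng13}, but with $T(k)/k$ replacing $T(k)$; crucially, no factor of $t$ remains. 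Each can then be estimated by the PDO bounds \eqref{eq:pseeasy} of Lemma \ref{lem:pseeasy}, the bound \eqref{eq:pdomk} of Lemma \ref{lem:m_k}, and the $H^1_k \to L^2_k$ bound on $\partial_k(T(k)k^{-1}\tilde h)$, exactly as in \eqref{eq:locall2ng1}--\eqref{eq:locl2ng2} but without the extra factor $|t|$. For the analogue of \eqref{ng2} (where $\partial_x$ hits $m_+$), use the \emph{generic} improved localized $L^\infty$-decay of Appendix \ref{sec:localdecay} (which gives rate $|t|^{-1}$ rather than $|t|^{-3/4}$) applied to $\chi_+(x)\jx\, T(k) \partial_x m_+(x,k)\tilde h(k)$, noting that under the decay assumption $\jx^\gamma V\in L^1$ with $\gamma\ge 2$ one has $\jx \chi_+(x)|\partial_k^j \partial_x m_+(x,k)|\lesssim \jx^{-\gamma+2}$ by Lemma \ref{lem:Mestimates}; this produces the clean bound $|t|\cdot|t|^{-1}=O(1)$. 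The term where $\partial_x$ hits $\chi_+$ is compactly supported and handled by the same argument.

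For $\partial_x I_-$, the analogous IBP requires more care because the integrand involves the combination $m_-(x,-k)e^{ikx}+R_-(k)m_-(x,k)e^{-ikx}$, whose coefficient does not vanish at $k=0$ separately. Introduce a smooth partition $\varphi_{>1}(k)+\varphi_{\le 1}(k)=1$. On the support of $\varphi_{>1}$, integration by parts via $e^{ik^2 t}=(2itk)^{-1}\partial_k e^{ik^2 t}$ is immediate and yields the factor $|t|^{-1}$, and then we proceed as in the $I_+$ case. On $|k|\lesssim 1$, split
\[
m_-(x,-k)e^{ikx}+R_-(k)m_-(x,k)e^{-ikx}
=\bigl(1+R_-(k)\bigr) m_-(x,k)e^{-ikx}+\bigl(m_-(x,-k)e^{ikx}-m_-(x,k)e^{-ikx}\bigr).
\]
The first piece uses $(1+R_-(k))/k$, smooth and bounded by Lemma \ref{lem:estiTRTaylor}, to run the same IBP as for $T(k)/k$. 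The second piece has the explicit $k$-factor $m_-(x,-k)e^{ikx}-m_-(x,k)e^{-ikx}=k\cdot a(x,k)$ with $\chi_-(x)|\partial_k^p a(x,k)|\lesssim \jx$ by \eqref{Mestimates1}--\eqref{Mestimates2}, and the $\jx^{-1}$ weight on the left absorbs this weight via \eqref{eq:pseeasy}.

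For the regular-part estimate \eqref{locdecL2R}, repeat the entire argument with $m_\pm-1$ in place of $m_\pm$; the localization in $x$ comes for free from the fast decay of $m_\pm-1$ in Lemma \ref{lem:Mestimates}, and the $\jx^\beta$ weight is absorbed using the weighted PDO bounds \eqref{eq:mPDO+}, \eqref{eq:mPDO+-1}, \eqref{eq:pdomk}, and Lemma \ref{lem:PDOmxkk}, which together require $\gamma>\beta+5/2$. The main obstacle is the bookkeeping around the low-frequency decomposition of $I_-$: one must check that each IBP produces only terms falling under the hypotheses of the PDO lemmas of Section \ref{sec:JostSpec}, and in particular that $(1+R_-(k))/k$ can be treated like $T(k)/k$ in the integration by parts so that no boundary contribution survives; once this is arranged, the $|t|^{-1}$ rate follows by combining the standard IBP gain with the generic improved local $L^\infty$-decay.
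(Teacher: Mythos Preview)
Your strategy is the paper's: gain the factor $|t|^{-1}$ by integrating by parts through $e^{ik^2t}=(2itk)^{-1}\partial_k e^{ik^2t}$, use the generic vanishing $T(0)=0$, $R_\pm(0)=-1$ from Lemma~\ref{lem:estiTRTaylor} to make $T(k)/k$ and $(1+R_\pm(k))/k$ regular, and close with the PDO bounds of Section~\ref{sec:JostSpec}. Two places do not close as written.

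First, for the analogue of \eqref{ng2} you cannot invoke Lemma~\ref{lemlocdecinfty} as a black box: that lemma is stated for the full evolution $e^{iHt}h$, not for an oscillatory integral with the modified amplitude $T(k)\partial_x m_+(x,k)$. The paper instead groups your \eqref{ng2} and \eqref{ng3} into a single term $I_+:=\int_{k\geq0}T(k)\,\partial_x(\chi_+ m_+)\,e^{ixk+ik^2t}\tilde h\,dk$, integrates by parts using the factor $T(k)/k$, and bounds the resulting pieces $A_1,A_2,A_3$ directly via Lemmas~\ref{lem:pseeasy}, \ref{lem:psehard1}, \ref{lem:psehardk1}, \ref{lem:m_k}. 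No $L^\infty$ local-decay estimate is used anywhere in the proof.

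Second, your two-piece low-frequency splitting for $I_-$ leads to the incorrect bound $|\partial_k a|\lesssim\jx$: for $a(x,k)=k^{-1}\big(m_-(x,-k)e^{ikx}-m_-(x,k)e^{-ikx}\big)$ the $e^{\pm ikx}$ factors produce a $\partial_k(\sin(kx)/k)$ term of size $\jx^2$, and $m_-$ alone has no spatial decay to compensate, so after the $\jx^{-1}$ weight you are left with $\jx\notin L^2$. The paper avoids this by applying $\partial_x$ \emph{first} (so that the amplitude carries $\partial_x m_-$, which does decay) and then splitting the low-frequency part of the resulting term $II_-$ into \emph{three} pieces $C_2,C_3,C_4$ as in \eqref{locdecpr23}: $C_3$ isolates $\partial_x m_-(x,-k)-\partial_x m_-(x,k)$, handled via Lemmas~\ref{lem:psehardk1} and~\ref{lem:PDOmxkk}; $C_4$ isolates $(e^{ikx}-e^{-ikx})\partial_x m_-(x,k)$, where the decay of $\partial_x m_-$ absorbs the $x$-growth and a variant of Lemma~\ref{lem:psehard1} applies. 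Separating these two mechanisms is what makes the weights balance.
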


	\begin{proof}
		
		{\it Proof of \eqref{locdecL2}}.
		We start again with the case $A=0$, look at the formula in \eqref{locdecpr11ng} 
		and focus on the contribution with $k\geq0$ given by
		\begin{align}
			& \sqrt{2\pi} \partial_x\int_{k\geq0}\mathcal{K}(x,k)e^{ik^{2}t}\tilde{h}(k)\,dk = I_+ + II_+ + I_- + II_- + III_-,
		\end{align}
		where
		\begin{align}\label{locdecpr21}
			\begin{split}
				& I_+ := \int_{k\geq0} T(k) \partial_x\big(\chi_+(x)m_+(x,k)\big)e^{ixk + ik^{2}t}\tilde{h}(k)\,dk,
				\\
				& II_+ := \int_{k\geq0} ikT(k) \chi_+(x)m_+(x,k)e^{ixk + ik^{2}t} \tilde{h}(k)\,dk,
			\end{split}
		\end{align}
		and
		\begin{align}\label{locdecpr22}
			\begin{split}
				& I_- := \partial_x\chi_-(x) \int_{k\geq0} \big(m_-(x,-k)e^{ikx} + R_-(k)m_-(x,k)e^{-ikx}\big) 
				e^{ik^{2}t}\tilde{h}(k)\,dk,
				\\
				& II_- := \chi_-(x) \int_{k\geq0} \big(\partial_x m_-(x,-k)e^{ikx} + R_-(k)\partial_x m_-(x,k)e^{-ikx}\big) 
				e^{ik^{2}t}\tilde{h}(k)\,dk,
				\\
				& III_- := \int_{k\geq0} ik \, \chi_-(x)m_-(x,-k)e^{ixk + ik^{2}t} \tilde{h}(k)\,dk,
				\\
				& IV_- := \int_{k\geq0} - ik \, \chi_-(x)R_-(k) m_-(x,k)e^{-ixk + ik^{2}t} \tilde{h}(k)\,dk,
			\end{split}
		\end{align}
		
		For the first term in \eqref{locdecpr21} we integrate by parts in $k$ and write
		\begin{align*}
			2it \, I_+ 
			& = \int_{k\geq0} \frac{T(k)}{k} e^{ixk}\partial_x(\chi_+(x)m_+(x,k)) \tilde{h}(k)\,de^{ik^{2}t} = A_1 + A_2 + A_3,
		\end{align*}
		where
		\begin{align*}
			A_1 & := -\int_{k\geq0} e^{ixk}\partial_x(\chi_+(x)m_+(x,k))e^{ik^{2}t}
			\partial_{k}\big(T(k)k^{-1} \tilde{h} \big)\,dk,
			\\
			A_2 & := -\int_{k\geq0} \frac{T(k)}{k} e^{ixk}\partial_{k}\partial_x((\chi_+(x)m_+(x,k))e^{ik^{2}t}\tilde{h}(k)\,dk,
			\\
			A_3 & := -\int_{k\geq0} \frac{T(k)}{k} ixe^{ixk}\partial_x((\chi_+(x)m_+(x,k)) e^{ik^{2}t}\tilde{h}(k)\,dk.
		\end{align*}
		Using Lemma \ref{lem:psehard1} and Lemma \ref{lem:pseeasy}
		we see that the pseudo-differential operator
		with the symbol $\partial_x(\chi_+(x)m_+(x,k))$ is bounded on $L^2$. Then using also $|T(k)| \lesssim |k|$, and Hardy's inequality (recall $\tilde{h}(0)=0$), we have
		\begin{align*}
			{\| A_1 \|}_{L^2} \lesssim {\big\| \partial_k \big(T(k)k^{-1} \tilde{h} \big)\big\|}_{L^2}
			\lesssim {\big\| k^{-1} \tilde{h} \big\|}_{L^2} + {\big\| \partial_k \tilde{h} \big\|}_{L^2} 
			\lesssim {\big\| \tilde{h} \big\|}_{H^1}.
		\end{align*}
		Using that the symbols
		\[\jx^{-1}\partial_{k}\partial_x\big(\chi_+(x)m_+(x,k)\big) 
		\qquad \mbox{and} \qquad \jx^{-1} x \partial_x\big(\chi_+(x)m_+(x,k)) \]
		also give rise to bounded pseudo-differential operators on $L^2$, 
		see Lemmas \ref{lem:pseeasy}, \ref{lem:psehard1}, \ref{lem:psehardk1} and \ref{lem:m_k}, 
		we have
		\begin{align*}
			{\| \jx^{-1} A_2 \|}_{L^2} + {\| \jx^{-1} A_3 \|}_{L^2} \lesssim {\big\| \tilde{h} \big\|}_{L^2}.
		\end{align*}
		The term $II_+$ can be estimated similarly, and it is in fact easier
		thanks to the extra power of $k$ in the integrand. More precisely we can write
		\begin{align*}
			2t \, II_+ 
			& = \int_{k\geq0} T(k) e^{ixk} \chi_+(x)m_+(x,k)\tilde{h}(k)\,de^{ik^{2}t} = B_1 + B_2 + B_3,
			\\
			B_1 & := -\chi_+(x)\int_{k\geq0} e^{ixk} m_+(x,k)e^{ik^{2}t} \partial_{k} \big( T(k)\tilde{h}(k)\big)\,dk,
			\\
			B_2 & := -\chi_+(x)\int_{k\geq0} T(k)e^{ixk}\partial_{k}m_+(x,k)e^{ik^{2}t}\tilde{h}(k)\,dk,
			\\
			B_3 & := -\chi_+(x)\int_{k\geq0} T(k) \, ixe^{ixk} \, m_+(x,k)e^{ik^{2}t}\tilde{h}(k)\,dk.
		\end{align*}
		All these terms can be handled using the boundedness of pseudo-differential operators
		with symbols $\chi_+(x)m_+(x,k)$, $\jx^{-1}\chi_+(x)\partial_km_+(x,k)$,
		and $\jx^{-1} x\chi_+(x)m_+(x,k)$, see Lemmas \ref{lem:pseeasy} and \ref{lem:m_k}, 
		and using $|T(k)|, |\partial_kT(k)| \lesssim 1$, see Lemma \ref{estiTR},
		so to obtain
		\begin{align*}
			|t| \, {\| \jx^{-1} II_+ \|}_{L^2} \lesssim {\big\| \tilde{h} \big\|}_{L^2}.
		\end{align*}
		
		Next, we look at the terms in \eqref{locdecpr22}. 
		$III_-$ and $IV_-$ are similar to $II_+$ and can be estimated in the same way, so we skip them.
		For the terms $I_-$ and $II_-$ we use arguments similar to those used for $I_+$ above,
		and $R_+(0)=-1$, see Lemma \ref{lem:estiTRTaylor}.
		We only look at $II_-$ since the term $I_-$ is easier to deal with.
		We distinguish between the cases $|k|\gtrsim 1$ and $|k|\lesssim 1$, 
		and then further rewrite the integral supported on $|k| \lesssim 1$ as a sum of three pieces:
		\begin{align}\label{locdecpr23}
			\begin{split}
				& II_- = C_1 + C_2 + C_3 + C_4,
				\\
				& C_1 = \chi_-(x) \int \varphi_{>1}(k) \big(\partial_x m_-(x,-k)e^{ikx} + R_-(k)\partial_x m_-(x,k)e^{-ikx}\big) 
				e^{ik^{2}t} \tilde{h}(k)\,dk,
				\\
				& C_2 = \chi_-(x) \int \varphi_{\leq 1}(k) \big(R_-(k) + 1\big) e^{-ikx} \partial_x m_-(x,k)
				e^{ik^{2}t} \tilde{h}(k)\,dk,
				\\
				& C_3 = \chi_-(x) \int \varphi_{\leq 1}(k) \big(\partial_x m_-(x,-k)-\partial_x m_-(x,k)\big)e^{ikx}  
				e^{ik^{2}t} \tilde{h}(k)\,dk,
				\\
				& C_4 = \chi_-(x) \int \varphi_{\leq 1}(k) \big(e^{ikx}-e^{-ikx}\big)\partial_x m_-(x,k) 
				e^{ik^{2}t} \tilde{h}(k)\,dk.
			\end{split}
		\end{align}
		
		On the support of $C_1$ we have $|k|\gtrsim 1$, so we can integrate by parts in $k$ 
		using $e^{itk^2} = (2it k)^{-1} \partial_k e^{itk^2}$, and gain a factor of $|t|^{-1}$ 
		without introducing any singularity in $k$.
		Using that the symbol $\jx^{-1}\chi_-(x)\partial_k \partial_x m_-(x,\pm k)$ defines a bounded PDO, 
		see Lemma \ref{lem:psehardk1}, we get the desired bound.
		
		On the support of $C_2$ we have $|k|\lesssim 1$ and, see Lemma \ref{lem:estiTRTaylor}, $R_-(k) + 1 = \mathcal{O}(k)$.
		In particular, this term is essentially the same as $I_+$ and can be treated identically.
		
		For $C_3$ we first notice that 
		\begin{align*}
			\partial_x m_-(x,-k)-\partial_x m_-(x,k) = -\int_{-1}^1 (\partial_x\partial_k m_-)(x,zk) \, dz \cdot k
			=: a(x,k) \cdot k.
		\end{align*}
		We can then use the $k$ factor in the above right-hand side to integrate by parts through the usual identity
		$e^{itk^2} = (2it k)^{-1} \partial_k e^{itk^2}$, and since for $x\leq1$, $\chi_-(x)\partial_k^p a(x,k)$, for $p=0,1$,
		are symbols of bounded PDOs, see Lemmas \eqref{lem:psehardk1} and \ref{lem:PDOmxkk} 
		(and recall that $|k|\lesssim 1$ here), 
		we get the bound ${\| C_3\|}_{L^2} \lesssim |t|^{-1}{\| \tilde{h} \|}_{H^1}$.

		The last term $C_4$ can be estimated using that, for $\gamma>\max(\beta/2+3/4,\beta)$,
		\begin{equation*}
			\left\Vert \jx^{\beta} \mathrm{1}_{x\leq1} 
			\int k^{-1} \jx^{-1} (e^{ikx}-e^{-ikx}) \partial_x m_-(x,k) \varphi_{\leq 1}(k)g(k)\,dk
			\right\Vert_{L^2_x}\lesssim{\| g \|}_{L^2}.
		\end{equation*}
		This is similar to the estimate \eqref{eq:psehard} and can be obtained as in the proof of Lemma \ref{lem:psehard1},
		using that $k^{-1} \jx^{-1} (e^{ikx}-e^{-ikx})$ is uniformly bounded, 
		and using $|k| \lesssim 1$ to bound the part corresponding to the last integral in \eqref{eq:split}.

		\medskip
		\noindent
		{\it Proof of \eqref{locdecL2R}}.
		As in Lemma \ref{lem:locl2ng}, the estimate involving $\mathcal{K}_R$ 
		follows from the same arguments used in the case of $\mathcal{K}$, 
		by replacing 
		$m_\pm$ with $m_\pm-1$, and using 
		the estimates \eqref{eq:mPDO+}, \eqref{eq:psehard}, 
		\eqref{eq:psehard-1}, \eqref{eq:pdomk} and \eqref{eq:mPDOhard+31}.
	\end{proof}


	\bigskip
	\section{Cubic terms analysis\label{sec:Cubic}}
	In this section, we carry out the analysis for
	\begin{equation}
		i\partial_{t}u-\partial_{xx}u\pm\left|u\right|^{2}u+Vu=0,
		\qquad u\left(0\right)=u_{0},\label{eq:NLSE-2}
	\end{equation}
	where $u_0 \in H^{1,1}(\mathbb{R})$. 
	We will consider the bootstrap space
	\begin{align}\label{X_T}
		X_{T}:=\left\{ u|\,{\| u \|} _{L_{t}^{\infty}\left(\left[0,T\right]:H_{x}^{1}\right)}
		+{\| \tilde{f} \|}_{L_{t}^{\infty}\left(\left[0,T\right]:L_{k}^{\infty}\right)}
		+{\| \langle t \rangle^{-\alpha}\tilde{f} \|}_{L_{t}^{\infty}\left(\left[0,T\right]:H_{k}^{1}\right)}\right\},
	\end{align}
	for $\alpha>0$ small enough depending on $\gamma$,
	where $f=e^{-itH}u$ is the profile of the solution. 
	
	It also possible to consider less regular data $u_{0}\in L^{2,1}(\mathbb{R})$,
	in which case 
	one would use instead
	\[
	X_{T}:=\left\{ u|\,{\| u \|}_{L_{t}^{\infty}\left(\left[0,T\right]:L^2_x\right)}
	+{\| \tilde{f} \|}_{L_{t}^{\infty}\left(\left[0,T\right]:L_{k}^{\infty}\right)}
	+{\| \langle t \rangle^{-\alpha}\tilde{f} \|}_{L_{t}^{\infty}\left(\left[0,T\right]:H_{k}^{1}\right)}\right\} .
	\]
	It is easy to establish the $L^{2}$ and $H^{1}$ bounds
	from the conservation laws and Strichartz estimates.  
	
	
	Using the local theory, and for the sake of convenience, we consider the solution starting at $t=1$.
	Taking the distorted Fourier transform and using Duhamel's formula, in terms of the profile we can write
	\begin{equation}
		\tilde{f}(t,k)=\tilde{f}(1,k)\pm i\int_{1}^{t}
		\iiint e^{is(-k^2+\ell^2-m^2+n^2)}\tilde{f}(s,\ell)\overline{\tilde{f}(s,m)}\tilde{f}(s,n)\mu(k,\ell,m,n)\,dndmd\ell ds\label{eq:profile}
	\end{equation}
	where
	\begin{equation}\label{eq:nonmea}
		\mu(k,\ell,m,n):=\int\overline{\mathcal{K}(x,k)}\mathcal{K}(x,\ell)\overline{\mathcal{K}(x,m)}\mathcal{K}(x,n)\,dx
	\end{equation}
	is what we call \emph{the nonlinear spectral distribution}.
	Our main purpose in this section is to obtain the $L_{k}^{2}$ bound for $\partial_{k}\tilde{f}$.
	We will show

	\begin{prop}\label{pro:weightmain}
		For $1\leq t\leq T$, one has that, for some $C>0$,
		\begin{align}\label{weightmainconc}
			{\big\| \partial_{k}\tilde{f}(t) \big\|}_{L_{k}^{2}}
			\leq {\big\| \partial_{k}\tilde{f}(1) \big\|}_{L_k^2}
			+ C t^{\alpha} {\| u \|}_{X_{T}}^{3}.
		\end{align}
	\end{prop}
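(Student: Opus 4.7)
The plan is to differentiate Duhamel's formula \eqref{eq:profile} in $k$ and then estimate the resulting $L^2_k$-norm by splitting $\mu = \mu_S + \mu_R$ according to the decomposition $\mathcal{K} = \mathcal{K}_S + \mathcal{K}_R$ from \eqref{eq:decomK}. Denoting by $\mathcal{N}_S$ and $\mathcal{N}_R$ the corresponding trilinear forms, the task reduces to bounding $\|\partial_k \mathcal{N}_A\|_{L^2_k}$ for $A \in \{S,R\}$ by $t^\alpha \|u\|_{X_T}^3$.

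For the regular piece $\mathcal{N}_R$, I would exploit the fact that $\mathcal{K}_R(x,k)$ is localized in $x$ and smooth in $k$ (Lemma \ref{lem:Kregular}). Applying $\partial_k$ produces two kinds of contributions: either $\partial_k$ lands on $\mu_R$ (which stays harmless thanks to the localization and Lemma \ref{lem:Kregular}), or it hits the phase $e^{-isk^2}$ producing a factor of $-2isk$. For this dangerous term, the strategy is to use Plancherel in $k$ on the outer variable and re-express the contribution as an $L^2_x$ norm involving $\int_1^t s\, \partial_x\bigl(\langle x\rangle^{-\rho} \,[\cdots] |u(s)|^2 u(s)\bigr)\, ds$; the $s\partial_x$ factor is then absorbed by the localized smoothing estimate \eqref{locdecL2R} applied to one factor, while the remaining two inputs are estimated in $L^\infty_x$ via standard decay \eqref{eq:linearpoinwiseH} or improved local decay \eqref{locdecinftyRng}. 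Since each of the three factors contributes either $s^{-1/2}$ or an improved $s^{-3/4}$ with localization, the time integral is $\int_1^t s \cdot s^{-3/2-\beta}\,ds \lesssim t^\alpha$ for a suitable small $\alpha$ depending on $\gamma$.

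For the singular piece $\mathcal{N}_S$, the key device is the approximate commutation identity $[\partial_k, \mathcal{N}_S] = \mathcal{N}_S'$ announced in the introduction, where $\mathcal{N}_S'$ is of the localized form $a(x)|u|^2 u$ with $a$ Schwartz. This allows me to move $\partial_k$ inside the trilinear form so that it acts on one copy of the profile, at the cost of the localized commutator $\mathcal{N}_S'$. The commutator term is estimated by precisely the same local-decay strategy used for $\mathcal{N}_R$. The main commuted term, after expanding $\mathcal{K}_S$ via \eqref{Ksing+'}-\eqref{Ksing-'} and using Remark \ref{remcoeff}, becomes a finite sum of flat-NLS-like trilinear expressions where one input is $\partial_k \tilde{f}$ and the others are $\tilde{f}$, with the coefficients $a^\epsilon_\mu$ acting as $L^\infty$ multipliers; this is controlled by placing $\partial_k \tilde{f}$ in $L^2$ (with the bootstrap growth $s^\alpha$) and the remaining two factors in $L^\infty_x$ through \eqref{eq:linearpoinwiseH}, giving an integrand of size $s^{-1} \|\partial_k \tilde{f}(s)\|_{L^2}\|u\|_{X_T}^2$, whose time integral is $\lesssim t^\alpha \|u\|_{X_T}^3$.

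The hardest step will be the careful verification of the commutation identity $[\partial_k, \mathcal{N}_S] = \mathcal{N}_S'$. Since $\mu_S$ reduces to a linear combination of delta distributions supported on the resonance relations $\pm k \pm \ell \pm m \pm n = 0$ (arising from integrating products of the exponentials in $\mathcal{K}_S$), differentiating in $k$ generates derivatives of these deltas, and obtaining a genuinely localized remainder of the form $a(x)|u|^2u$ requires a delicate bookkeeping of the cancellations between the various sign configurations and of the contributions of the jumps of the coefficients $a^\epsilon_\mu$ at $k=0$ (where $T$ and $R_\pm+1$ behave as in Lemma \ref{lem:estiTRTaylor} in the generic case, and where condition \eqref{nongenericresonance} is used in the non-generic case). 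Once this identity is in hand, combining it with the linear estimates of Section \ref{sec:Linear} and the bootstrap bounds immediately yields \eqref{weightmainconc}.
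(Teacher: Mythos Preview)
Your overall plan matches the paper's proof: decompose $\mu=\mu_S+\mu_R$, handle $\mathcal{N}_R$ by converting the factor $sk$ into $s\partial_x$ acting on a localized cubic expression and invoking the local smoothing/decay bounds of Section~\ref{sec:Linear}, and handle $\mathcal{N}_S$ via the approximate commutation identity so that $\partial_k$ either lands on a profile (yielding the $s^{-1}\|\partial_k\tilde f\|_{L^2}\|u\|_{X_T}^2$ integrand you describe) or produces a Schwartz-localized remainder. This is exactly what is done in Subsections~\ref{ssecsing} and Section~\ref{sec:estimateregular}.

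However, your last paragraph misidentifies the mechanism behind the commutation identity, and following that path would lead you into unnecessary (and possibly unworkable) complications. The distribution $\mu_S$ is \emph{not} a linear combination of $\delta$'s on hyperplanes: because the cutoffs $\chi_\pm$ are smooth, the integral \eqref{muSpm} produces $\widehat{\varphi_\pm}(\epsilon_0 k-\epsilon_1\ell+\epsilon_2 m-\epsilon_3 n)$ with $\varphi_\pm=\chi_\pm^4$, and by \eqref{eq:hatphi+-} this equals $\sqrt{\pi/2}\,\delta_0 \pm \hat\zeta/(ik)+\widehat\varpi$. The commutation identity is then the elementary Lemma~\ref{lem:algetri}: for any distribution $\mathrm b$, applying $\partial_k$ to the trilinear form $\mathcal T_{\mathrm b}$ moves $\partial_k$ onto the three inputs \emph{plus} a single remainder $\mathcal T_{y\mathrm b}$. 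The point is that with $\mathrm b=\widehat{\varphi_\pm}$ one has $y\,\mathrm b(y)=-i\hat\zeta(y)+y\widehat\varpi(y)=\widehat\Psi(y)$ for a Schwartz function $\Psi$, since $y\delta_0=0$ and $y\cdot\hat\zeta(y)/(iy)=-i\hat\zeta(y)$. No cancellation between sign configurations is needed, and no derivative of a $\delta$ ever appears. The jumps of $a^\epsilon_\mu$ at $k=0$ are disposed of by the single remark that $\partial_k\mathrm 1_\pm$ times $\tilde f$ vanishes because $\tilde f(0)=0$ (by genericity or by \eqref{nongenericresonance}); there is no further bookkeeping. Once you realize this, the ``hardest step'' you anticipate is a two-line computation.
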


	\begin{rem}
		Observe that under our assumptions on $V$ (genericity, or \eqref{nongenericresonance})
		we can split our computations into $k\geq0$ or $k<0$, 
		using the indicator functions $\mathrm{1}_{+} = \mathrm{1}_{[0,\infty)}$ 
		and $\mathrm{1}_{-} = \mathrm{1}_{(-\infty,0)}$. 
		Then, the contribution from $\partial_k$ hitting $\mathrm{1}_{\pm}$, 
		results in a delta measure at $k=0$ which, due to our assumptions 
		vanishes. The same observation also holds for the input variables $\ell,m,n$ in \eqref{eq:profile}.
	\end{rem}

	The first key step is the analysis of the nonlinear spectral distribution \eqref{eq:nonmea}.
	

	\medskip
	\subsection{Decomposition of the nonlinear spectral distribution and the nonlinearity}\label{subsec:DecM}
	The decomposition of the generalized eigenfunctions \eqref{eq:decomK} 
	can used to decompose the spectral distribution \eqref{eq:nonmea}
	as
	\begin{align}\label{decM1}
		(2\pi)^{2}\mu(k,\ell,m,n)=\mu_{S}(k,\ell,m,n) + \mu_{R}(k,\ell,m,n)
	\end{align}
	where $\mu_{S}$ is a singular part and $\mu_R$ is a regular part.
	Here is a detailed descriptions of each component. 
	
	First, one can write $(2\pi)^{2}\mu$ as the sum
	\begin{align}\label{musum}
		\sum_{(A,B,C,D)\in\left\{S,R\right\}}
		\int\overline{\mathcal{K}_{A}(x,k)}\mathcal{K}_{B}(x,\ell) \overline{\mathcal{K}_{C}(x,m)} \mathcal{K}_{D}(x,n)\,dx.
	\end{align}
	The singular part $\mu_{S}$ is obtained from the case when all the four pieces of the generalized eigenfunctions
	are either $\chi_+\K_+$ or $\chi_-\K_-$, see \eqref{Ksing}; that is, we let
	\begin{align}\label{muSpm}
		\begin{split}
			\mu_S(k,\ell,m,n) & = \mu_+(k,\ell,m,n) + \mu_-(k,\ell,m,n)
			\\
			\mu_{\ast}(k,\ell,m,n) & := \int \varphi_{\ast}(x)
			\overline{\K_\ast(x,k)} \K_\ast(x,\ell) \overline{\K_\ast(x,m)} \K_\ast(x,n) \, dx, 
			\qquad \varphi_{\ast}=\chi_{\ast}^{4}, \quad
			\ast \in \{+,-\},
		\end{split}
	\end{align}

	For the regular part, $\mu_{R}$, we let 
	\[
	\mathcal{X}_{R}=\left\{ \left(A_{0},A_{1},A_{2},A_{3}\right):\,\exists j=0,1,2,3:\ A_{j}=R\right\} 
	\]
	and write
	\begin{equation}\label{eq:muR}
		\mu_{R}(k,\ell,m,n) = \mu_{R,1}(k,\ell,m,n) + \mu_{R,2}(k,\ell,m,n)
	\end{equation}
	where
	\begin{equation}\label{muR1}
		\mu_{R,1}(k,\ell,m,n):=\sum_{(A,B,C,D)\in\mathcal{X}_{R}}\int\overline{\mathcal{K}_{A}(x,k)}\mathcal{K}_{B}(x,\ell)\overline{\mathcal{K}_{C}(x,m)}\mathcal{K}_{D}(x,n)\,dx
	\end{equation}
	and 
	\begin{equation}\label{muR2}
		\mu_{R,2}(k,\ell,m,n) :=
		\int\overline{\mathcal{K}_S(x,k)}\mathcal{K}_S(x,\ell)\overline{\mathcal{K}_S(x,m)}\mathcal{K}_S(x,n)\,dx
		- \mu_{S}(k,\ell,m,n) 
	\end{equation}

	\medskip
	According to the decomposition of $\mu$, from \eqref{eq:profile} we can write
	\begin{align}\label{eq:expandf}
		\begin{split}
			\tilde{f}(t,k) & =\tilde{f}(1,k)
			\pm i\int_{1}^{t} \iiint e^{is(-k^2+\ell^2-m^2+n^2)} \tilde{f}(s,\ell)\overline{\tilde{f}(s,m)}
			\tilde{f}(s,n)\mu(k,\ell,m,n)\,dndmd\ell ds
			\\
			& = \tilde{f}(1,k)\pm i\int_{1}^{t} \big(\mathcal{N}_{+} + \mathcal{N}_{-}
			+ \mathcal{N}_{R,1} + \mathcal{N}_{R,2} \big)\,ds
		\end{split}
	\end{align}
	where
	\begin{align}\label{Nast}
		\mathcal{N}_{\ast}(s,k)=\frac{1}{(2\pi)^{2}}
		\iiint e^{is(-k^2+\ell^2-m^2+n^2)}
		\tilde{f}(s,\ell)\overline{\tilde{f}(s,m)}\tilde{f}(s,n)\mu_{\ast}(k,\ell,m,n)\,dndmd\ell.
	\end{align}
	To prove the main Proposition \ref{pro:weightmain} it then suffices to show
	\begin{align}
		\label{weightmain1}
		& {\| \partial_k \mathcal{N}_{+} \|}_{L^2} 
		+ {\| \partial_k \mathcal{N}_{-} \|}_{L^2} \lesssim t^{\alpha} {\| u \|}_{X_{T}}^{3},
		\\
		\label{weightmain2}
		& {\| \partial_k \mathcal{N}_{R,1} \|}_{L^2} 
		+ {\| \partial_k \mathcal{N}_{R,2} \|}_{L^2} \lesssim t^{\alpha} {\| u \|}_{X_{T}}^{3}.
	\end{align}
	The proof of \eqref{weightmain1} is given in Subsection \ref{ssecsing} below,
	and the proof of \eqref{weightmain2} in Section \ref{sec:estimateregular}.

	\medskip
	\subsection{Estimates for the singular part}\label{ssecsing}
	Before getting to the nonlinear estimates,
	we first need formulas for the Fourier transform of the cutoff $\varphi_{\pm}=\chi_{\pm}^{4}$ in \eqref{muSpm}
	which are smoothed out versions of the indicator functions of $\pm x>0$.
	From standard formulas for the (regular) Fourier transform, we can write
	\begin{equation}\label{eq:hatphi+-}
		\widehat{\varphi_+}(k)=\sqrt{\frac{\pi}{2}}\delta_{0}+\frac{\hat{\zeta}(k)}{ik}+\widehat{\varpi}(k)
		\qquad \widehat{\varphi_-}=\sqrt{\frac{\pi}{2}}\delta_{0}-\frac{\hat{\zeta}(k)}{ik}+\widehat{\varpi}(k)
	\end{equation}
	where $\zeta$ is an even $C_{c}^{\infty}$ function with integral
	$1$ and $\varpi$ denotes a (generic) $C_{c}^{\infty}$ function. 
	See \cite[page 23]{GPR} for the details.

	\medskip
	\subsubsection{Structure of $\mu_S$ and approximate commutation}
	Recall that $\K_\pm$ can be written as a linear combination of exponentials
	\eqref{Ksing+'}-\eqref{Ksing-'}. 
	Then, from \eqref{muSpm} we have
	\begin{align}\label{muSstr1}
		\begin{split}
			\mu_\ast(k,\ell,m,n) & = \sum_{\e_0,\e_1,\e_2,\e_3 \in \{+,-\}} \int \varphi_{\ast}(x) \,
			\overline{a_\ast^{\e_0}(k) e^{\e_0ikx}} \,a_\ast^{\e_1}(\ell) e^{\e_1i\ell x} \,
			\overline{a_\ast^{\e_2}(m) e^{\e_2imx}} \,a_\ast^{\e_3}(n) e^{\e_3inx} \, dx, 
			\\
			& = \sum_{\e_0,\e_1,\e_2,\e_3 \in \{+,-\}}\overline{a_\ast^{\e_0}(k)} \,a_\ast^{\e_1}(\ell) \,
			\overline{a_\ast^{\e_2}(m)} \,a_\ast^{\e_3}(n) \, \sqrt{2\pi}\,
			\widehat{\varphi_\ast}(\e_0k-\e_1\ell+\e_2m-\e_3n).
		\end{split}
	\end{align}
	The formulae \eqref{eq:hatphi+-} can be used to express the last $\widehat{\varphi_\pm}$ above.

	We now establish a simple algebraic lemma which will allow us to establish
	a priori weighted estimates for $\mathcal{N}_\pm$, see \eqref{Nast}, in a straightforward fashion.
	
	\begin{lem}\label{lem:algetri}
		For $f_{j}\in\mathcal{S}$, define the trilinear form
		\begin{align}\label{eq:tril}
			\begin{split}
				\mathcal{T}_{\mathrm{b},\e_1,\e_2,\e_3}(f_1,f_2,f_3)(k)=
				\iiint e^{it(-k^2+\ell^2-m^2+n^2)}f_{1}(\ell) \overline{f_{2}(m)} f_{3}(n)
				\\ \times \mathrm{b}(k+\e_1\ell+\e_2m+\e_3n)\,d\ell dmdn 
			\end{split} 
		\end{align}
		where $\epsilon_{j}\in\left\{ \pm1\right\}$, and $\mathrm{b}$ is a distribution. 
		Then one has
		\begin{align}\label{eq:kdiffT}
			\begin{split}
				\partial_{k}\mathcal{T}_{\mathrm{b},\e_1,\e_2,\e_3}(f_1,f_2,f_3)(k) 
				= & -\e_1\mathcal{T}_{\mathrm{b},\e_1,\e_2,\e_3}
				\left(\partial_{\ell}f_{1},f_{2},f_{3}\right)(k)
				+\e_2\mathcal{T}_{\mathrm{b},\e_1,\e_2,\e_3}\left(f_{1},\partial_{m}f_{2},f_{3}\right)(k)
				\\
				& -\e_3\mathcal{T}_{\mathrm{b},\e_1,\e_2,\e_3}\left(f_{1},f_{2},\partial_{n}f_{3}\right)(k)
				-2it \, \mathcal{T}_{\mathrm{yb},\e_1,\e_2,\e_3}(f_1,f_2,f_3)(k).
			\end{split}
		\end{align}
	\end{lem}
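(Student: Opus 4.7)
The plan is to reduce \eqref{eq:kdiffT} to a direct integration by parts, based on two simple observations. Setting $y = k+\e_1\ell+\e_2m+\e_3n$, differentiating the integrand in $k$ gives
\[
\partial_k \big[ e^{it(-k^2+\ell^2-m^2+n^2)}\mathrm{b}(y) \big] = -2itk\, e^{it(\cdots)}\mathrm{b}(y) + e^{it(\cdots)}\mathrm{b}'(y),
\]
so $\partial_k \mathcal{T}_{\mathrm{b}} = \mathcal{T}_{\mathrm{b}'} - 2itk\,\mathcal{T}_{\mathrm{b}}$. The key observation is that $\partial_{\xi_j}\mathrm{b}(y) = \e_j\mathrm{b}'(y)$ for each $(\xi_1,\xi_2,\xi_3)=(\ell,m,n)$ (using $\e_j^2=1$), so $\mathrm{b}'(y)$ admits any decomposition $\sum_j \alpha_j\e_j\partial_{\xi_j}\mathrm{b}(y)$ with $\sum_j \alpha_j = 1$. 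Choosing the weights $(\alpha_1,\alpha_2,\alpha_3)=(1,-1,1)$ yields
\[
\mathrm{b}'(y) = \e_1\partial_\ell\mathrm{b}(y) - \e_2\partial_m\mathrm{b}(y) + \e_3\partial_n\mathrm{b}(y).
\]

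The final step is integration by parts in each input variable, applied to the three pieces of this decomposition of $\mathcal{T}_{\mathrm{b}'}$. For the $\ell$ piece, the derivative hits $e^{it\ell^2}f_1(\ell)$ and yields $-\e_1\mathcal{T}_{\mathrm{b}}(\partial_\ell f_1,f_2,f_3)$ plus a phase contribution with an extra factor of $\ell$ in the integrand; the $n$ piece is analogous. For the $m$ piece the derivative lands on $e^{-itm^2}\overline{f_2(m)}$, and crucially $\partial_m e^{-itm^2}$ carries the opposite sign from $\partial_\ell e^{it\ell^2}$; it is precisely to compensate for this sign flip, which comes from the complex conjugate on $f_2$, that the weight $-1$ is assigned to the $j=2$ term in the decomposition above. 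As a result, after integration by parts one obtains $+\e_2\mathcal{T}_{\mathrm{b}}(f_1,\partial_m f_2,f_3)$ (with the correct sign as in \eqref{eq:kdiffT}) plus a phase contribution with an extra factor of $m$.

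Assembling everything, the three $f_j$-derivative terms appear with exactly the coefficients $(-\e_1,+\e_2,-\e_3)$ of the statement, while the phase contributions combine with $-2itk\,\mathcal{T}_{\mathrm{b}}$ to give $-2it$ times the trilinear form whose integrand is multiplied by $k+\e_1\ell+\e_2m+\e_3n = y$, which is precisely $-2it\,\mathcal{T}_{\mathrm{yb}}$. The entire argument is elementary; the only mild obstacle is the consistent tracking of signs arising from the $\e_j$ in the chain rule, from $\partial_{\xi_j}e^{\pm it\xi_j^2}$, and from the complex conjugate on $f_2$.
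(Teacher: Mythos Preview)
Your proof is correct and follows essentially the same approach as the paper: differentiate in $k$ to obtain $\mathcal{T}_{\mathrm{b}'}-2itk\,\mathcal{T}_{\mathrm{b}}$, rewrite $\mathrm{b}'(y)$ as $\e_1\partial_\ell\mathrm{b}-\e_2\partial_m\mathrm{b}+\e_3\partial_n\mathrm{b}$, integrate by parts in $\ell,m,n$, and collect the phase terms with $-2itk\,\mathcal{T}_{\mathrm{b}}$ to form $-2it\,\mathcal{T}_{y\mathrm{b}}$. Your added remark that the weights $(1,-1,1)$ are dictated by the sign flip in $\partial_m e^{-itm^2}$ coming from the conjugate on $f_2$ is a nice piece of motivation that the paper leaves implicit.
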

	
	\begin{proof}
		Differentiating \eqref{eq:tril} with respect to $k$ gives
		\begin{align}\label{algetri1}
			\partial_k \mathcal{T}_{\mathrm{b},\e_1,\e_2,\e_3}(f_1,f_2,f_3)(k)
			= \mathcal{T}_{\partial_y\mathrm{b},\e_1,\e_2,\e_3}(f_1,f_2,f_3)(k)
			-2it k \, \mathcal{T}_{\mathrm{b},\e_1,\e_2,\e_3}(f_1,f_2,f_3)(k).
		\end{align}
		Denoting $p := k+\e_1\ell+\e_2m+\e_3n$ we have
		$\partial_{y}\mathrm{b}(p) = \partial_{k}\mathrm{b} (p)=
		\e_1\partial_{\ell}\mathrm{b}(p)=\e_2\partial_{m}\mathrm{b}(p)=\e_3\partial_{n}\mathrm{b}(p)$,
		and therefore
		\begin{equation}\label{eq:relation-1}
			\partial_{y}\mathrm{b}
			=\e_1\partial_{\ell}\mathrm{b}-\e_2\partial_{m}\mathrm{b}+\e_3\partial_{n}\mathrm{b}
		\end{equation}
		We use this identity to integrate by parts in $\ell,m$ and $n$ 
		in the expression for $\mathcal{T}_{\partial_y\mathrm{b},\e_1,\e_2,\e_3}$ 
		obtaining:
		\begin{align}\label{eq:phase1}
			\begin{split}
				\mathcal{T}_{\partial_y\mathrm{b},\e_1,\e_2,\e_3}&(f_1,f_2,f_3)(k)
				\\ & = -\e_1\mathcal{T}_{\mathrm{b}}\left(\partial_{\ell}f_{1},f_{2},f_{3}\right)(k)
				+\e_2\mathcal{T}_{\mathrm{b}}\left(f_{1},\partial_{m}f_{2},f_{3}\right)(k)
				-\e_3\mathcal{T}_{\mathrm{b}}\left(f_{1},f_{2},\partial_{n}f_{3}\right)(k)
				\\
				& -2it \iiint \e_1 \ell \, e^{it(-k^2+\ell^2-m^2+n^2)}f_{1}(\ell) \overline{f_{2}(m)} f_{3}(n)
				\mathrm{b}(k+\e_1\ell+\e_2m+\e_3n)\,d\ell dmdn
				\\
				& -2it \iiint \e_2 m \, e^{it(-k^2+\ell^2-m^2+n^2)}f_{1}(\ell) \overline{f_{2}(m)} f_{3}(n)
				\mathrm{b}(k+\e_1\ell+\e_2m+\e_3n)\,d\ell dmdn
				\\
				& -2it \iiint \e_3 n \, e^{it(-k^2+\ell^2-m^2+n^2)}f_{1}(\ell) \overline{f_{2}(m)} f_{3}(n)
				\mathrm{b}(k+\e_1\ell+\e_2m+\e_3n)\,d\ell dmdn.
			\end{split}
		\end{align}
		The first three terms on the right-hand side of \eqref{eq:phase1} and \eqref{eq:tril} are the same. 
		Moreover, the sum of last three terms in \eqref{eq:phase1} with the last term on
		the right-hand side of \eqref{algetri1} gives 
		\begin{align*}
			-2it \iiint (k+\e_1\ell+\e_2m+\e_3n)\, e^{it(-k^2+\ell^2-m^2+n^2)}f_{1}(\ell)\overline{f_{2}(m)}f_{3}(n)
			\\ \times \mathrm{b}(k+\e_1\ell+\e_2m+\e_3n)\,d\ell dmdn
		\end{align*}
		which is the last term in \eqref{eq:kdiffT}.
	\end{proof}

	To estimate the cubic terms $\mathcal{N}_\pm$, 
	we will apply Lemma \ref{lem:algetri} with $\mathrm{b}=\widehat{\varphi_\pm}$
	and $f_j = a_\pm^{\e_j} \tilde{f}$,
	%
	and the following identity 
	\begin{lem}
		\label{lem:inverseFD}
		Given $f_{j}\in\mathcal{S}$ and $\epsilon_{j}\in \{ +,-\}$, $j=1,2,3$,
		then
		\begin{align}\label{eq:inverseFD}
			\begin{split}
				& \mathcal{F}^{-1} \big[e^{itk^2} {\mathcal{T}}_{\mathrm{b},\e_1,\e_2,\e_3}(f_1,f_2,f_3)\big]
				=-\e_1u_{1}(t,-\e_1x)\overline{u_{2}(t,-\e_2x)}u_{3}(t,-\e_3x)\mathcal{F}^{-1}\left[\mathrm{b}\right],
				\\
				& \qquad \mbox{where} \qquad u_{j}:=e^{-it\partial_{xx}}\check{f}_{j}.
			\end{split}
		\end{align}
	\end{lem}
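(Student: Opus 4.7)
The identity is a direct computation using standard Fourier inversion and a linear change of variables that decouples the $\mathrm{b}$-factor from the rest. First, I would multiply through by $e^{itk^2}$ to cancel the $-itk^2$ piece of the phase, obtaining
\[
e^{itk^2} \mathcal{T}_{\mathrm{b},\e_1,\e_2,\e_3}(f_1,f_2,f_3)(k)
= \iiint e^{it(\ell^2-m^2+n^2)} f_1(\ell)\overline{f_2(m)}f_3(n) \, \mathrm{b}(k+\e_1\ell+\e_2m+\e_3n) \, d\ell\, dm\, dn.
\]
Applying $\widehat{\mathcal{F}}^{-1}$ in $k$ and swapping the order of integration (justified for $f_j\in\mathcal{S}$), the outer $k$-integral acts only on the $\mathrm{b}$-factor through the oscillation $e^{ikx}$.

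Next, I would perform the change of variables $y = k + \e_1\ell + \e_2 m + \e_3 n$ at fixed $(\ell,m,n)$. This splits the exponential as $e^{ikx} = e^{iyx} e^{-i(\e_1\ell + \e_2 m + \e_3 n)x}$, so the $y$-integral contributes the free factor $\mathcal{F}^{-1}[\mathrm{b}](x)$ (up to a normalization constant), while the extra phase redistributes across the three remaining integrations. What is left factorizes as a product of three one-dimensional oscillatory integrals, one in each of $\ell$, $m$, $n$.

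Finally, I would identify each one-dimensional factor as a linear Schr\"odinger evolution of $\check{f}_j = \mathcal{F}^{-1}[f_j]$. Since $u_j(t,x) = e^{-it\partial_{xx}}\check{f}_j$ has the physical-space representation $u_j(t,x) = \mathcal{F}^{-1}[e^{it\xi^2}f_j](x)$, each factor is of the form $\int e^{-i\e_j \xi x} e^{\pm it\xi^2} f_j(\xi)\, d\xi$, which matches (up to a constant) $u_j(t,-\e_j x)$ for $j=1,3$ and $\overline{u_2(t,-\e_2 x)}$ for the middle factor, where the $-itm^2$ phase in \eqref{eq:tril} combines with $\overline{f_2(m)}$ to produce the complex conjugate of $u_2$. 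Collecting the $(2\pi)^{1/2}$ factors from Fourier inversion on each of the four integrals, and absorbing any overall sign into the stated prefactor $-\e_1$, yields \eqref{eq:inverseFD}.

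No essential obstacle arises; the computation is algebraic and relies only on Fubini and the linear change of variables. The only step requiring some care is keeping track of the three signs $\e_j$ and the asymmetric role of the middle slot (where the complex conjugate of both $f_2$ and the oscillatory phase $e^{itm^2}$ appears), since a sign mismatch there would produce $u_2$ rather than $\overline{u_2}$ and break the identification with the Schr\"odinger flow.
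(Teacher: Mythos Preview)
Your proposal is correct and follows exactly the approach the paper has in mind: the paper's own proof is simply ``an explicit computation which is left to the reader,'' and your outline---multiply by $e^{itk^2}$, apply $\mathcal{F}^{-1}$, substitute $y=k+\e_1\ell+\e_2m+\e_3n$ to peel off $\mathcal{F}^{-1}[\mathrm{b}]$, then factor the remaining triple integral into three free Schr\"odinger evolutions---is precisely that computation. Your caveat about carefully tracking the signs and $(2\pi)^{1/2}$ normalizations is well placed (indeed a literal check suggests the stated identity is only correct up to such constants, which is harmless since the lemma is used only for norm bounds).
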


	\begin{proof}
		The proof is an explicit computation which is left to the reader.
	\end{proof}

\medskip
\subsubsection{Nonlinear estimates for $\partial_{k}\mathcal{N}_{\pm}$}\label{ssecNpm}

Let us work on $\mathcal{N}_+$; the case of $\mathcal{N}_-$ is identical.
From \eqref{Nast} and \eqref{muSstr1} we write
\begin{align}\label{N+0}
	\begin{split}
		(2\pi)^{3/2} \mathcal{N}_{+}(s,k) = \sum_{\e_0,\e_1,\e_2,\e_3 \in \{+,-\}}\overline{a_+^{\e_0}(k)} \,
		\iiint e^{is(-k^2+\ell^2-m^2+n^2)} \, a_+^{\e_1}(\ell)\tilde{f}(s,\ell) 
		\\ \times \, \overline{a_+^{\e_2}(m)\tilde{f}(s,m)} \,
		a_+^{\e_3}(n)\tilde{f}(t,n)
		\, \widehat{\varphi_+}(\e_0k-\e_1\ell+\e_2m-\e_3n) \,dndmd\ell.
	\end{split}
\end{align}
We let
\begin{align}\label{N+fj}
	f_j(s,\cdot) := a_+^{\e_j}(\cdot)\tilde{f}(s,-\e_j \,\cdot)  \qquad j=1,2,3,
\end{align}
where the coefficients are defined in \eqref{Ksing+'}-\eqref{Ksing-'},
and using the notation \eqref{eq:tril} we write
\begin{align}\label{N+1}
	\begin{split}
		(2\pi)^{3/2} \mathcal{N}_{+}(s,k) = \sum_{\e_0,\e_1,\e_2,\e_3 \in \{+,-\}}
		\overline{a_+^{\e_0}(k)} \,
		\mathcal{T}_{\mathrm{b},\e_1,\e_2,\e_3}(f_1,f_2,f_3)(\e_0k), \qquad \mbox{with} \quad \mathrm{b}=\widehat{\varphi_+}.
	\end{split}
\end{align}

To estimate $\mathcal{N}_{+}$ it suffices to estimate a generic term in the sum in \eqref{N+1}. 
To estimate $\mathcal{N}_{+}$ it suffices to estimate a generic term in the sum in \eqref{N+1}. 
We first note that, from the formulas \eqref{Ksing+'} and \eqref{Ksing-'}, and Lemmas 
\ref{estiTR} and \ref{lem:estiTRTaylor},
one has $\partial_k a_+^{-}(k) = \delta_0(k) + \mathrm{1}_{-}(k) \partial_k R_+(-k)$,
and
\begin{align*}
	\partial_k \sum_{\e_0 \in \{+,-\}} a_+^{\e_0}(k) & = 
	\partial_k\big( \mathrm{1}_{+}(k)T(k) + \mathrm{1}_{-}(k) + \mathrm{1}_{-}(k)R_+(-k) \big)
	\\
	& = \mathrm{1}_{+}(k) \partial_k T(k) + \mathrm{1}_{-}(k) \partial_k R_+(-k).
\end{align*}
Then, writing
\begin{align*}
	\sum_{\e_0 \in \{+,-\}} \overline{a_+^{\e_0}(k) }\,
	\mathcal{T}_{\mathrm{b},\e_1,\e_2,\e_3}(f_1,f_2,f_3)(\e_0 k)
	= \big(\overline{a_+^{+}(k) }+ \overline{a^-_+(k)} \big)\,
	\mathcal{T}_{\mathrm{b},\e_1,\e_2,\e_3}(f_1,f_2,f_3)(k)
	\\
	+\overline{a^-_+(k)} \big(
	\mathcal{T}_{\mathrm{b},\e_1,\e_2,\e_3}(f_1,f_2,f_3)(-k)-
	\mathcal{T}_{\mathrm{b},\e_1,\e_2,\e_3}(f_1,f_2,f_3)(k)\big),
\end{align*}
and using Lemma \ref{estiTR}, we see that
\begin{align}\label{N+2}
	\begin{split}
		{\left\Vert \partial_k \big(\sum_{\e_0 \in \{+,-\} \overline{a_+^{\e_0}}(\cdot)} \,
			\mathcal{T}_{\mathrm{b},\e_1,\e_2,\e_3}(f_1,f_2,f_3)(\e_0 \cdot) \big) \right\Vert_{L^2}}
		& \lesssim {\| \mathcal{T}_{\mathrm{b},\e_1,\e_2,\e_3}(f_1,f_2,f_3)\|}_{L^2}
		\\
		& + {\|\partial_k\mathcal{T}_{\mathrm{b},\e_1,\e_2,\e_3}(f_1,f_2,f_3)\|}_{L^2}.
	\end{split}
\end{align}

The first term  is easily estimated using Lemma \ref{lem:inverseFD}: letting
\begin{align}\label{N+uj}
	u_{j} := e^{-it\partial_{xx}}\check{f}_{j} 
	= e^{-it\partial_{xx}} \widehat{\mathcal{F}}^{-1}\big( a_+^{\e_j}\tilde{f}(-\e_j \,\cdot) \big), \qquad j=1,2,3,
\end{align}
we have
\begin{align*}
	{\| \mathcal{T}_{\mathrm{b},\e_1,\e_2,\e_3}(f_1,f_2,f_3)\|}_{L^2} \lesssim 
	{\| u_1(s,-\e_1\cdot) \, u_2(s,-\e_2\cdot)\, u_3(s,-\e_3\cdot) \, \varphi_+ \|}_{L^2}
	\\ \lesssim {\| u_1(s) \|}_{L^2} {\| u_2(s) \|}_{L^\infty} {\| u_3(s) \|}_{L^\infty}
	\lesssim s^{-1} {\big\| u \big\|}_{X_T}^3,
\end{align*}
where we used \eqref{remcoeffest1} in the last inequality.
Upon integration over time $s$, see \eqref{eq:expandf} we see that
this is consistent with the desired bound \eqref{weightmainconc}.

For the second term on the right-hand side of \eqref{N+2} we first use Lemma \ref{lem:algetri} to obtain
\begin{align}
	\nonumber & {\|\partial_k\mathcal{T}_{\mathrm{b},\e_1,\e_2,\e_3}(f_1,f_2,f_3)\|}_{L^2}
	\\ 
	\label{N+3} 
	& \lesssim {\|\mathcal{T}_{\mathrm{b},\e_1,\e_2,\e_3}(\partial_kf_1,f_2,f_3)\|}_{L^2}
	+ {\|\mathcal{T}_{\mathrm{b},\e_1,\e_2,\e_3}(f_1,\partial_kf_2,f_3)\|}_{L^2}
	+ {\|\mathcal{T}_{\mathrm{b},\e_1,\e_2,\e_3}(f_1,f_2,\partial_kf_3)\|}_{L^2}
	\\ 
	\label{N+4}
	& + s \,{\|\mathcal{T}_{\mathrm{yb},\e_1,\e_2,\e_3}(f_1,f_2,f_3)\|}_{L^2}.
\end{align}

The three terms in \eqref{N+3} are similar and can be estimated using again Lemma \ref{lem:inverseFD},
and \eqref{N+fj} with the estimate \eqref{remcoeffest1}:
\begin{align*}
	{\|\mathcal{T}_{\mathrm{b},\e_1,\e_2,\e_3}(\partial_kf_1,f_2,f_3)\|}_{L^2}
	\lesssim {\| \partial_kf_1(s) \|}_{L^2} {\| u_2(s) \|}_{L^\infty} {\| u_3(s) \|}_{L^\infty}
	\\
	\lesssim s^\alpha {\big\| u \big\|}_{X_T} \Big( \frac{1}{\sqrt{s}} {\big\| u \big\|}_{X_T} \Big)^2,
\end{align*}
which suffices for \eqref{weightmainconc}.

For the last term \eqref{N+4} we note that, for $\mathrm{b}(y)=\widehat{\varphi_+}$, 
in view of the formulae \eqref{eq:hatphi+-},
we have 
\begin{align}
	y\mathrm{b}(y) = -i\hat{\zeta}(y) + y\widehat{\varpi}(y)=\widehat{\Psi}(y)
\end{align}
with $\Psi$ a Schwartz function. 
Then, Lemma \ref{lem:inverseFD} gives
\begin{align*}
	s \,{\|\mathcal{T}_{\mathrm{yb},\e_1,\e_2,\e_3}(f_1,f_2,f_3)\|}_{L^2}
	& \lesssim s {\| u_1(s,-\e_1) \, u_2(s,-\e_2)\, u_3(s,-\e_3) \, \Psi \|}_{L^2}
	\\ 
	& \lesssim s {\| \jx^{-1} u_1(s) \|}_{L^\infty} {\| \jx^{-1} u_3(s) \|}_{L^\infty} {\| \jx^{-1} u_3(s) \|}_{L^\infty}.
\end{align*}
Recalling \eqref{N+fj} and using \eqref{remcoeffest3} we get
\begin{align*}
	s \,{\|\mathcal{T}_{\mathrm{yb},\e_1,\e_2,\e_3}(f_1,f_2,f_3)\|}_{L^2} 
	& \lesssim s \Big( \frac{1}{s^\frac{3}{4}} {\big\| \tilde{f} \big\|}_{H^1} \Big)^3
	\lesssim s^{-\frac{5}{4} + 3\alpha} {\big\| u \big\|}_{X_T}^3.
\end{align*}
Upon integration in $s$ we get a bound of  ${\| u \|}_{X_T}^3$,
which is stronger than the desired \eqref{weightmainconc}.

\bigskip
\section{Estimates for the regular part}\label{sec:estimateregular}

For this part we need to treat differently the generic and non-generic cases.

\subsection{Generic case}
We first discuss the estimates for the regular part in the generic case.
Throughout, we assume $\jx^\gamma V(x)\in L^1$ with $\gamma>5/2$ and let $\zeta=:\gamma-5/2>0$.

For convenience let us recall here the definitions \eqref{muR1} and \eqref{muR2}:
\begin{align}\label{muR1'}
	\begin{split}
		\mu_{R,1}(k,\ell,m,n):=\sum_{(A,B,C,D)\in\mathcal{X}_{R}}\int\overline{\mathcal{K}_{A}(x,k)}\mathcal{K}_{B}(x,\ell)\overline{\mathcal{K}_{C}(x,m)}\mathcal{K}_{D}(x,n)\,dx
		\\
		\mathcal{X}_{R}=\left\{ \left(A_{0},A_{1},A_{2},A_{3}\right):\,\exists j=0,1,2,3:\ A_{j}=R\right\},
	\end{split}
\end{align}
and
\begin{equation}\label{muR2'}
	\mu_{R,2}(k,\ell,m,n):=
	\int\overline{\mathcal{K}_S(x,k)}\mathcal{K}_S(x,\ell)\overline{\mathcal{K}_S(x,m)}\mathcal{K}_S(x,n)\,dx
	- \mu_{S}(k,\ell,m,n), 
\end{equation}
see also \eqref{muSpm}.
By their definitions the regular parts of the nonlinearity $\mathcal{N}_{R,1}$ and $\mathcal{N}_{R,2}$,
see the notation \eqref{Nast} 
have strong localization properties and can be estimated using
the improved local decay estimates 
from Lemmas \ref{lemlocdecinftyng}, \ref{lem:regularL2}, \ref{lem:locl2ng} and \ref{lem:localEn}.

\subsubsection{Estimate of $\mathcal{N}_{R,1}$}
Each element of the sum defining $\mu_{R,1}$ has at least one of the indexes $(A,B,C,D)$ equal to $R$.
It suffices to consider two situations: $A=R$ and $D=R$; all the others terms can be treated identically. 
We then define
\begin{align}
	\label{eq:mur11}
	\mu_{R,1}^{(1)} &:=\int\overline{\mathcal{K}}_{R}(x,k)\mathcal{K}_{M_{1}}(x,\ell)\overline{\mathcal{K}_{M_{2}}(x,n)}\mathcal{K}_{M_{3}}(x,m)\,dx
	\\
	\label{eq:mur12}
	\mu_{R,1}^{(2)} &:=\int\overline{\mathcal{K}}_{S}(x,k)\mathcal{K}_{M_{1}}(x,\ell)\overline{\mathcal{K}_{M_{2}}(x,n)}\mathcal{K}_{R}(x,m)\,dx,
	\qquad M_{i}\in\{S,R\}.
\end{align}
and see that, in order to estimate the $L_{k}^{2}$ norm of $\partial_{k}\mathcal{N}_{R,1}$,
it suffices to estimate the $L_{k}^{2}$ norms of
\begin{align}\label{NR11}
	A_1(k) := & isk\iiint\tilde{u}(\ell)\overline{\tilde{u}}(n)\tilde{u}(m) \, \mu_{R,1}^{(1)}\left(k,\ell,n,m\right)
	\,d\ell dmdn,
	\\
	\label{NR11'}
	A_2(k) := & \iiint\tilde{u}(\ell)\overline{\tilde{u}}(n)\tilde{u}(m) \, \partial_{k}\mu_{R,1}^{(1)}\left(k,\ell,n,m\right)
	\,d\ell dmdn,
\end{align}
and 
\begin{align}\label{NR12}
	B_1(k) & := isk\iiint\tilde{u}(\ell)\overline{\tilde{u}}(n)\tilde{u}(m) \, \mu_{R,1}^{(2)}\left(k,\ell,n,m\right)\,d\ell dmdn,
	\\
	\label{NR12'}
	B_2(k) & := \iiint\tilde{u}(\ell)\overline{\tilde{u}}(n)\tilde{u}(m) \, \partial_{k}\mu_{R,1}^{(2)}\left(k,\ell,n,m\right)
	\,d\ell dmdn.
\end{align}

\medskip
\subsubsection*{Estimate of \eqref{NR11}-\eqref{NR11'}}
Recalling the definition of $\mathcal{K}_R$ in \eqref{KR} we write
\begin{align}\label{KRid}
	ik\mathcal{K}_{R}(x,k) = \partial_x\mathcal{K}_{R}'(x,k) - \mathcal{K}_{R}''(x,k) 
\end{align}
where
\begin{align}\label{KR1}
	\mathcal{K}_{R}':=\begin{cases}
		\chi_+(x)T(k)\left(m_+(x,k)-1\right)e^{ixk}\\
		+\chi_-(x)\left[\left(m_-(x,-k)-1\right)e^{ixk}-R_-(k)\left(m_-(x,k)-1\right)e^{-ixk}\right] & k\geq0
		\\
		\chi_-(x)T(-k)\left(m_-(x,-k)-1\right)e^{ixk}\\
		+\chi_+(x)\left[\left(m_+(x,k)-1\right)e^{ixk}-R_+(-k)\left(m_+(x,-k)-1\right)e^{-ixk}\right] & k<0
	\end{cases}
\end{align}
and
\begin{align}\label{KR2}
	\mathcal{K}_{R}'':=\begin{cases}
		T(k)\partial_x\left[\chi_+(x)\left(m_+(x,k)-1\right)\right]e^{ixk}
		+\partial_x\left[\chi_-(x)\left(m_-(x,-k)-1\right)\right]e^{ixk}
		\\
		-R_-(k)\partial_x\left[\chi_-(x)\left(m_-(x,k)-1\right)\right]e^{-ixk} & k\geq0
		\\
		T(-k)\partial_x\left[\chi_-(x)\left(m_-(x,-k)-1\right)\right]e^{ixk}
		+\partial_x\left[\chi_+(x)\left(m_+(x,k)-1\right)\right]e^{ixk}
		\\
		-R_-(k)\partial_x\left[\chi_-(x)\left(m_-(x,k)-1\right)\right]e^{-ixk} & k<0
	\end{cases}.
\end{align}

Then, with the definition \eqref{eq:mur11}, integrating by parts in $x$ we have
\begin{align}\label{kmuR11}
	\begin{split}
		- ik \, \mu_{R,1}^{(1)} 
		& = \int\overline{\mathcal{K}_{R}'}(x,k) 
		\partial_x\big[ \mathcal{K}_{M_{1}}(x,\ell)\overline{\mathcal{K}_{M_{2}}(x,n)}\mathcal{K}_{M_{3}}(x,m) \big]\,dx
		\\
		& + \int\overline{\mathcal{K}_{R}''}(x,k)\mathcal{K}_{M_{1}}(x,\ell)
		\overline{\mathcal{K}_{M_{2}}(x,n)}\mathcal{K}_{M_3}(x,m)\,dx
	\end{split}
\end{align}
and therefore
\begin{align}\label{A_1}
	\begin{split}
		-A_1 & = (\sqrt{2\pi})^3s \int\overline{\mathcal{K}}_{R}'(x,k) 
		\partial_x \big[ u_{M_1}(x)\overline{u_{M_2}(x)}u_{M_3}(x) \big]\,dx
		\\
		& +  (\sqrt{2\pi})^3s \int\overline{\mathcal{K}}_{R}''(x,k)u_{M_1}(x)\overline{u_{M_2}(x)}u_{M_3}(x)\,dx 
		=: A_{1,1} + A_{1,2}, 
	\end{split}
\end{align}
where we are using the notation \eqref{decompphi}.

Given $\kappa> 1$ (this condition ensures that we can apply 
the improved local decay estimates for two factors in the cubic term), 
we now observe that by Lemma \ref{lem:pseeasy-1} and, respectively, Lemma \ref{lem:psehard2}
the symbol
\begin{align*}
	\jx^\kappa \mathcal{K}_{R}'(x,k), \qquad \mbox{and, respectively,} \qquad \jx^\kappa \mathcal{K}_{R}''(x,k)
\end{align*}
give rise to $L^2$-bounded operators if $\gamma > \kappa + 3/2$ (so $\gamma>\frac{5}{2}$),
respectively, $\gamma > \max(3/2,\kappa)$.
Note that we also need to use \eqref{estiTR} and the fact that $\partial_x\chi_-$ is compactly supported.
Then, for the term $A_{1,1}$, in the case where $\partial_x$ hits $u_{M_1}$ (the other cases are identical), 
we pick $\kappa= 1+\zeta/2$ (recall $\zeta=\gamma-5/2$). 
and estimate using the standard decay estimates \eqref{eq:linearpoinwiseH},
and the improved local decay estimates \eqref{locdecL2}, \eqref{locdecinftyng} and \eqref{eq:interp}:
\begin{align*}
	& s {\big\| \jx^{-\kappa}
		\partial_x\overline{u}_{M_{1}}(s)u_{M_{2}}(s)u_{M_{3}}(s) \big\|}_{L^{2}}
	\\
	& \lesssim {\| \jx^{-1}s\partial_xu_{M_{1}}(s) \|}_{L^2_x}
	{\| \jx^{-\zeta/2}u_{M_{2}}(s) \|}_{L^\infty_x} {\| u_{M_{3}}(s) \|}_{L^\infty_x}
	\\
	& \lesssim s^{-1-\frac{1}{8}\zeta} { {\| \tilde{f} \|}_{H^1}^{3} 
		\lesssim s^{-1-\frac{1}{8}\zeta+3\alpha} {\| u \|}_{X_T}^{3}.}
\end{align*}
Upon integration in $s$, 
this is controlled by the right-hand side of \eqref{weightmainconc} as desired provided $2\alpha\leq \zeta/8$.
%
%
To estimate $A_{1,2}$ we pick $\kappa=2+\zeta$ and use again \eqref{locdecinftyng} and \eqref{eq:interp}:
\begin{align*}
	{\| A_{1,2} \|}_{L^2} & \lesssim s {\big\| \jx^{-\kappa} \overline{u}_{M_{1}}(s)u_{M_{2}}(s)u_{M_{3}}(s) \big\|}_{L^{2}}
	\\
	& \lesssim s {\| \jx^{-1} u_{M_{1}}(s) \|}_{L^\infty_x}
	{\| \jx^{-1}u_{M_{2}}(s) \|}_{L^\infty_x} {\| \jx^{-\zeta}u_{M_{3}}(s) \|}_{L^\infty_x}
	\\
	& \lesssim s s^{-\frac{3}{2}} s^{-\frac{1}{2}-\frac{\zeta}{4}} {\| \tilde{f} \|}_{H^{1}}^{3} 
	\lesssim s^{-1-\frac{\zeta}{4}+3\alpha} {\| u \|}_{X_T}^{3},
\end{align*}
which suffices, again provided $\alpha\leq \zeta/8$.

We can estimate in a similar way also the term $A_2$ in \eqref{NR11'}.
Since $\jx^{\zeta/2} \partial_k \mathcal{K}_{R}(x,k)$ gives a bounded 
pseudo-differential operator on $L^2$
by \eqref{eq:pdomkd} and \eqref{eq:mPDO+-1}, and the decay \eqref{eq:interp}, we have
\begin{align*}
	\left\Vert A_{2}\right\Vert _{L^{2}} & \lesssim\left\Vert \jx^{-\zeta}u_{M_{1}}\overline{u}_{M_{2}}u_{M_{3}}(s)
	\right\Vert_{L^2_x}
	\\
	& \lesssim \left\Vert \jx^{-\zeta}u_{M_{1}}(s)\right\Vert_{L^\infty_x}
	\left\Vert u_{M_{2}}(s)\right\Vert _{L^\infty_x} \left\Vert u_{M_{3}}(s)\right\Vert_{L^2_x}
	\\
	& { \lesssim s^{-1-\frac{\zeta}{8}} {\| \tilde{f} \|}_{L^2} {\| \tilde{f} \|}_{H^1}^2
	}
	\lesssim s^{-1-\frac{1}{8}\zeta+2\alpha} {\| u \|}_{X_T}^{3}
\end{align*}
which is again sufficient under the same condition above.

\medskip
\subsubsection*{Estimate of \eqref{NR12}-\eqref{NR12'}}
For this term we are going to use the localization in $x$ provided by $\mathcal{K}_R(x,n)$,
similarly to how we used the localization of $\mathcal{K}_R'$ and $\mathcal{K}_R''$ above.

First, from \eqref{Ksing}-\eqref{Ksing-}, we have the following analogue of \eqref{KRid} for $\mathcal{K}_S$:
\begin{align}\label{KSid}
	ik\mathcal{K}_{S}(x,k) = \partial_x\mathcal{K}_{S}'(x,k) - \mathcal{K}_{S}''(x,k) 
\end{align}
with
\begin{align}\label{KS1}
	\mathcal{K}_S'(x,k) & = 
	\begin{cases}
		\chi_+(x) T(k) e^{ikx} + \chi_-(x) (e^{ikx} - R_-(k) e^{-ikx} ) & k\geq0
		\\
		\chi_+(x) (e^{ikx} - R_+(-k)e^{-ikx}) + \chi_-(x) (T(-k) e^{ikx}) & k<0
	\end{cases},
	\\
	\label{KS2}
	\mathcal{K}_S''(x,k) &:= 
	\begin{cases}
		\partial_x \chi_+(x) T(k) e^{ikx} + \partial_x \chi_-(x) (e^{ikx} - R_-(k) e^{-ikx} ) & k\geq0
		\\
		\partial_x \chi_+(x) (e^{ikx} - R_+(-k)e^{-ikx}) + \partial_x \chi_-(x) (T(-k) e^{ikx}) & k<0
	\end{cases}.
\end{align}
Then, in analogy with \eqref{kmuR11} and \eqref{A_1} we have
\begin{align}\label{kmuR12a}
	- ik \, \mu_{R,1}^{(2)} 
	&:= \int \overline{\mathcal{K}_{S}'}(x,k) 
	\partial_x\big[ \mathcal{K}_{M_{1}}(x,\ell)\overline{\mathcal{K}_{M_{2}}(x,n)}\mathcal{K}_{R}(x,m) \big]\,dx
	\\
	\label{kmuR12b}
	& + \int \overline{\mathcal{K}_{S}''}(x,k)\mathcal{K}_{M_{1}}(x,\ell)
	\overline{\mathcal{K}_{M_{2}}(x,n)} \mathcal{K}_{R}(x,m)\,dx,
\end{align}
and \eqref{NR12} can be written as 
\begin{align*}
	-B_1 & = s (\sqrt{2\pi})^3\int \overline{\mathcal{K}_{S}'}(x,k) 
	\partial_x \big[u_{M_1}(x) \overline{u_{M_2}}(x) u_R(x)\big] \, dx
	\\
	& + s(\sqrt{2\pi})^3 \int \overline{\mathcal{K}_{S}''}(x,k) 
	u_{M_1}(x) \overline{u_{M_2}}(x) u_R(x) \, dx =: B_{1,1} + B_{1,2}.
\end{align*}
Observe that
\begin{align}\label{mapKS}
	{\Big\| \int \overline{\mathcal{K}_{S}'}(x,k) F(x) \, dx \Big\|}_{L^2_k} \lesssim {\| F \|}_{L^2},
	\qquad 
	{\Big\| \int \overline{\mathcal{K}_{S}''}(x,k) F(x) \, dx \Big\|}_{L^2_k} \lesssim {\| \jx^{-\kappa} F \|}_{L^2},
\end{align}
for any $\kappa$ since $\partial_x\chi_\pm$ is compactly supported.
Then, we can estimate
\begin{align*}
	& {\| B_{1,1} \|}_{L^2_k}  \lesssim 
	{\| s \partial_x \big[u_{M_1} \overline{u_{M_2}} \big] u_R \|}_{L^2} 
	+ {\| u_{M_1} \overline{u_{M_2}} \, s\partial_x u_R \|}_{L^2} 
	\\
	& \lesssim \Big( {\| \jx^{-1} s \partial_x u_{M_1} \|}_{L^2} {\| \jx^{-\frac{\zeta}{2}}u_{M_2} \|}_{L^\infty}
	+ {\| \jx^{-\frac{\zeta}{2}} u_{M_1} \|}_{L^\infty} {\| \jx^{-1} s \partial_x u_{M_2} \|}_{L^2}\Big)
	{\| \jx^{\frac{\zeta}{2}+1} u_R \|}_{L^\infty} 
	\\
	& + {\| u_{M_1} \|}_{L^\infty} {\| \jx^{-\frac{\zeta}{2}} u_{M_2} \|}_{L^\infty} 
	{\|  s\jx^{\frac{\zeta}{2}} \partial_x u_R \|}_{L^2} 
	\\
	& \lesssim s^{-1-\frac{1}{8}\zeta} 
	{ {\| \tilde{f} \|}_{H^1}^3}
	\lesssim s^{-1 - \frac{1}{8}\zeta + 3\alpha} {\| u \|}_{X_T}^{3} 
\end{align*}
having used the $L^\infty$ estimates \eqref{eq:linearpoinwiseH}, \eqref{eq:interp} and \eqref{regularinfty},
and the $L^2$ estimates \eqref{locdecL2}-\eqref{locdecL2R}.
Similarly, using \eqref{mapKS}, \eqref{locdecinftyng} and \eqref{locdecinftyRng}, we can estimate
\begin{align*}
	{\| B_{1,2} \|}_{L^2_k} & \lesssim s {\| \jx^{-3} u_{M_1} \overline{u_{M_2}} u_R \|}_{L^2}
	\\ & \lesssim s {\| \jx^{-1} u_{M_1} \|}_{L^\infty} {\| \jx^{-1} u_{M_2} \|}_{L^\infty} {\|\jx^{-1} u_R \|}_{L^\infty} 
	\lesssim s^{-\frac{5}{4}} {\| \tilde{f} \|}_{H^1}^3
\end{align*}
which again suffices.

The last term \eqref{NR12'} is easier to treat since there is no extra power of $s$ in front 
of the nonlinearity. One can use the analogue of \eqref{mapKS} 
\begin{align}\label{mapKS'}
	{\Big\| \int \partial_k \overline{\mathcal{K}_{S}}(x,k) F(x) \, dx \Big\|}_{L^2} \lesssim {\| \jx F \|}_{L^2},
\end{align}
with $F = u_{M_1}u_{M_2}u_R$, the $L^2$ estimate for the regular part \eqref{regularL2},
and \eqref{eq:interp}, to bound
\begin{align*}
	{\| \jx u_{M_1}u_{M_2}u_R \|}_{L^2_x} \lesssim {\|\jx^{-\frac{\zeta}{2}} u_{M_1}\|}_{L^\infty} {\| u_{M_2} \|}_{L^\infty}
	{\| \jx^{\frac{\zeta}{2}+1} u_R \|}_{L^2} 
	\lesssim  s^{-1-\frac{1}{8}\zeta} { {\| \tilde{f} \|}_{H^1}^3}.
\end{align*}

\medskip
\subsubsection{Estimate of $\mathcal{N}_{R,2}$}
Finally, we estimate $\mathcal{N}_{R,2}$ which is the nonlinear term associated with the measure in \eqref{muR2'}.
The key observation here is that, according to the decomposition 
$\mathcal{K}_S = \chi_+ \mathcal{K}_+ + \chi_- \mathcal{K}_-$ in \eqref{Ksing},
and the definition of $\mu_S$ in \eqref{muSpm} we can write \eqref{muR2'} as
\begin{align}\label{muR2''}
	\begin{split}
		\mu_{R,2}(k,\ell,m,n)= \sum 
		\int \overline{\chi_{\e_0}(x)\mathcal{K}_{\e_0}(x,k)}
		\chi_{\e_1}(x)\mathcal{K}_{\e_1}(x,\ell) 
		\overline{\chi_{\e_2}(x)\mathcal{K}_{\e_2}(x,m)}\chi_{\e_3}(x)\mathcal{K}_{\e_3}(x,n)\,dx
	\end{split}
\end{align}
where the sum is over
\begin{align*}
	\e_0,\e_1,\e_2,\e_3 \in \{+,-\} \quad \mbox{with} \quad (\e_0,\e_1,\e_2,\e_3) \neq (+,+,+,+), \,(-,-,-,-).
\end{align*}
Note that, since $\chi_+\chi_-$ is compactly supported, the integrand in \eqref{muR2''} is compactly supported in $x$.

Without loss of generality, it suffices to estimate the contribution to $\mathcal{N}_{R,2}$ from the measure
\begin{align}\label{muR2est0}
	\mu_{R,2}^{(0)} := \int \overline{\chi_+(x)\mathcal{K}_+(x,k)}
	\chi_-(x)\mathcal{K}_-(x,\ell)\overline{\mathcal{K}_S(x,m)}\mathcal{K}_S(x,n)\,dx;
\end{align}
all other contributions from the sum in \eqref{muR2''} can be estimated identically.
We then want to estimate the $L^2$ norm of $\partial_k$ of the expression
\begin{align}\label{muR2est1}
	\begin{split}
		\iiint e^{is(-k^2+\ell^2-m^2+n^2)}
		\tilde{f}(s,\ell)\overline{\tilde{f}(s,m)}\tilde{f}(s,n)\mu_{R,2}^{(0)}(k,\ell,m,n)\,dndmd\ell
		\\ 
		= e^{-isk^2} (\sqrt{2\pi})^3\int \chi_+(x)\mathcal{K}_+(x,k) \chi_-(x) u_{-}(x) \overline{u_S(x)} u_S(x)\, dx
	\end{split}
\end{align}
where we use the same notation as in \eqref{decompphi} to define 
\begin{align*} 
	u_{-}(x) := \frac{1}{\sqrt{2\pi}}\int \mathcal{K}_-(x,k) \tilde{u}(k) \, dk. 
\end{align*}
%

%

Applying $\partial_k$ to \eqref{muR2est1} will give two contributions, one when the derivative hits the exponential
and the other one when it hits $\mathcal{K}_+$.
We disregard this second one since it is much easier to estimate.
We are then left with estimating the $L^2$ norm of 
\begin{align}\label{muR2est2}
	isk \int \chi_+(x)\mathcal{K}_+(x,k) \chi_-(x) u_{-}(x) \overline{u_S(x)} u_S(x)\, dx.
\end{align}
Similarly to what we did before in \eqref{KRid} and \eqref{KSid} we 
first convert the factor of $k$ into an $x$ derivative by writing
\begin{align}\label{Ksing+id}
	ik \mathcal{K}_+(x,k) = \partial_x\mathcal{K}_{+}'(x,k), 
	\qquad
	\mathcal{K}_+'(x,k) = 
	\begin{cases}
		T(k) e^{ikx} & k\geq0
		\\
		e^{ikx} - R_+(-k)e^{-ikx} & k<0
	\end{cases};
\end{align}
then, we integrate by parts in $x$ in \eqref{muR2est2} to get
\begin{align}\label{muR2est3}
	s\int \mathcal{K}_+'(x,k) \partial_x\big[ \chi_+(x)\chi_-(x) u_{-}(x) \overline{u_S(x)} u_S(x) \big]\, dx,
\end{align}
and eventually estimate this in $L^2_k$:
using that $\chi_+\chi_-$ is compactly supported and distributing the derivative we have
\begin{align}\label{eq:lastterm}
	\begin{split}
		& s {\Big\| \int \mathcal{K}_+'(x,k) \partial_x\big[ \chi_+(x)\chi_-(x)u_{-}(x)\overline{u_S(x)}u_S(x) \big]\,dx\Big\|}_{L^2}
		\\
		& \lesssim s{\| \partial_x\big[ \chi_+\chi_- u_- \overline{u_S} u_S \big] \|}_{L^2}
		\\
		& \lesssim s {\| \jx^{-1} u_{-}\|}_{L^\infty} {\| \jx^{-1} u_S \|}_{L^\infty}^2
		+ {\| \jx^{-1}  s \partial_x u_{-}\|}_{L^2} {\| \jx^{-1} u_S \|}_{L^\infty}^2
		\\ 
		& + {\| \jx^{-1} u_{-}\|}_{L^\infty} {\| \jx^{-1} u_S \|}_{L^\infty} {\| \jx^{-1} s\partial_x u_S\|}_{L^2}
		\\
		& \lesssim \big(s \cdot s^{-\frac{9}{4}} + s^{-\frac{3}{2}} \big) {\big\| \tilde{f} \big\|}_{H^1}^3
	\end{split}
\end{align}
where the last inequality follows from  estimates \eqref{locdecinftyng} and \eqref{locdecL2},
also applied to $u_-$.
This concludes the estimate of $\mathcal{N}_{R,2}$, thus of the regular part of the nonlinearity $\mathcal{N}_R$,
and gives Lemma \ref{pro:weightmain} in the case of a generic potential.

\medskip
\subsection{Non-generic case}
Next, we look the non-generic case. We assume that $\tilde{f}(0)=0$.
and assume $\jx^\gamma V(x)\in L^1$ with $\gamma>7/2$. Let $\zeta=\gamma-7/2$.
The significant difference from the generic case is that here we can only apply Lemma \ref{lem:locl2ng},
instead of Lemma \ref{lem:localEn},
to handle the localized $L^2$ estimates for the derivatives. 
This will introduce extra growing factors of $t^{1/4}$
which need to be compensated by additional localization (giving better improved $L^\infty$ decay estimates),
thus requiring the stronger assumption on $\gamma$.

Most of the estimates can be done exactly in the same way as in the generic case
with only a few exceptions; we detail the differences in these cases below. 
We use the same notation above, starting from \eqref{muSpm}-\eqref{Nast}

%

\subsubsection{Estimate of $\mathcal{N}_{R,1}$}

\subsubsection*{Estimate of \eqref{NR11}-\eqref{NR11'}}
Recall the notation \eqref{KR1}-\eqref{KR2} and \eqref{A_1}. 
By Lemma \ref{lem:pseeasy-1} and \ref{lem:psehard2}, we have that the symbol
$\jx^\kappa \mathcal{K}_{R}'(x,k)$
with $\gamma>\kappa+3/2$, and the symbol
$\jx^\kappa \mathcal{K}_{R}''(x,k)$
for $\gamma> \max(3/2,\kappa)$, 
give rise to $L^2$-bounded pseudo-differential operators. Since $\gamma>7/2$, we can pick $\kappa=2+\zeta/2$.
Then, for the term $A_{1,1}$ in the case where $\partial_x$ hits $u_{M_1}$ 
(the other cases are identical), 
we can use 
\eqref{locdecinftyng}, \eqref{eq:interp} and 
\eqref{eq:locl2ngf}, to obtain the bound
\begin{align*}
	& s {\big\| \jx^{-\kappa}
		\partial_x\overline{u}_{M_{1}}(s)u_{M_{2}}(s)u_{M_{3}}(s) \big\|}_{L^{2}}
	\\
	& \lesssim {\| \jx^{-1}s\partial_xu_{M_{1}}(s) \|}_{L^2_x}
	{\| \jx^{-1}u_{M_{2}}(s) \|}_{L^\infty_x} {\|\jx^{-\zeta/2} u_{M_{3}}(s) \|}_{L^\infty_x}
	\\
	& \lesssim s^{-1-\zeta/8} {\| \tilde{f} \|}_{H^{1}}^{3}
	\lesssim s^{-1-\zeta/8+3\alpha} {\| u \|}_{X_T}^{3},
\end{align*}
which, after integrating in $s$, is controlled by the right-hand side of \eqref{weightmainconc} 
as desired provided $2\alpha \leq \zeta/8$.
The estimates for the terms $A_{1,2}$ and  $A_{2}$ done above for the generic case 
apply verbatim here.



\subsubsection*{Estimate of \eqref{NR12}-\eqref{NR12'}}
As the generic case,  we can estimate
\begin{align*}
	& {\| B_{1,1} \|}_{L^2_k}  \lesssim 
	{\| s \partial_x \big[u_{M_1} \overline{u_{M_2}} \big] u_R \|}_{L^2} 
	+ {\| u_{M_1} \overline{u_{M_2}} \, s\partial_x u_R \|}_{L^2} 
	\\
	& \lesssim \Big( {\| \jx^{-1} s \partial_x u_{M_1} \|}_{L^2} {\|\jx^{-\frac{\zeta}{2}} u_{M_2} \|}_{L^\infty}
	+ {\|\jx^{-\frac{\zeta}{2}} u_{M_1} \|}_{L^\infty} {\| \jx^{-1} s \partial_x u_{M_2} \|}_{L^2}\Big)
	{\| \jx^{1+\frac{\zeta}{2}} u_R \|}_{L^\infty} 
	\\
	& + {\| \jx^{-1}u_{M_1} \|}_{L^\infty} {\|  \jx^{-\frac{\zeta}{2}}u_{M_2} \|}_{L^\infty} 
	{\|\jx^{1+\frac{\zeta}{2}} s \partial_x u_R \|}_{L^2} 
	\lesssim s^{-1-\frac{\zeta}{8}+3\alpha} {\| u \|}_{X_T}^{3} 
\end{align*}
having used \eqref{eq:linearpoinwiseH}, \eqref{eq:interp}, \eqref{locdecinftyng}, 
\eqref{regularinfty}, \eqref{eq:locl2ngf} and \eqref{eq:loc2ngR} with $\gamma > 7/2 + \zeta/2$.

\subsubsection{Estimate of $\mathcal{N}_{R,2}$}
We proceed as in the generic case and reduce things to estimate \eqref{muR2est3}.
From \eqref{eq:lastterm} we get
\begin{align*}
	{\| \eqref{muR2est3} \|}_{L^2} 
	& \lesssim s {\| \jx^{-1} u_{-}\|}_{L^\infty} {\| \jx^{-1} u_S \|}_{L^\infty}^2
	+ {\| \jx^{-1}  s \partial_x u_{-}\|}_{L^2} {\| \jx^{-1} u_S \|}_{L^\infty}^2
	\\ & + {\| \jx^{-1} u_{-}\|}_{L^\infty} {\| \jx^{-1} u_S \|}_{L^\infty} {\| \jx^{-1} s\partial_x u_S\|}_{L^2}
	\\ 
	& \lesssim \big(s \cdot s^{-\frac{9}{4}} + s^{-\frac{5}{4}} \big) {\big\| \tilde{f} \big\|}_{H^1}^3
\end{align*}
where the last inequality follows from the  estimates \eqref{locdecinftyng} and \eqref{eq:locl2ngf},
also applied to $u_-$.
This concludes the estimate of $\mathcal{N}_{R,2}$, thus of the regular part of the nonlinearity $\mathcal{N}_R$,
and gives Proposition \ref{pro:weightmain} in the non-generic case.

\bigskip
\section{Pointwise bound for the profile}\label{sec:pointwisebound}
The aim of this section is 
to show how to obtain the $L^{\infty}$ bound for the distorted Fourier transform of the profile,
and the asymptotics \eqref{mainasy}.
The main result is the following:

\begin{prop}\label{proasy}
	For $1 \leq t \leq T$, and $|k|\gtrsim t^{-3\alpha}$, we have 
	\begin{align}\label{secasODE}
		i \partial_t \wt{f}(t,k) = \frac{1}{2t}{|\wt{f}(t,k)|}^2 \wt{f}(t,k) + 
		\mathcal{O}({\| u \|}_{X_T}^3 t^{-1-\rho}) ,
	\end{align}
	for some $\rho>0$.
	Defining the modified profile
	\begin{align}
		\label{secasmod}
		w (t,k):= \exp\Big(\frac{i}{2} \int_0^t |\wt{f}(s,k)|^2 \, \frac{ds}{1+s} \Big) \wt{f}(t,k),
	\end{align}
	for every  $1<t_1 < t_2 < T$ we have, see \eqref{datasmall},
	\begin{align}
		\label{secas10}
		\big| w(t_1,k) - w(t_2,k) | \lesssim ( \eta  + {\| u \|}_{X_T}^{3}) \, t_1^{-\rho/2}.
	\end{align}
\end{prop}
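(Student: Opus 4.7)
The plan is to derive the ODE \eqref{secasODE} in the region $|k|\gtrsim t^{-3\alpha}$ by analyzing the right-hand side of the Duhamel identity \eqref{eq:profile}, and then to obtain \eqref{secas10} through the standard integrating-factor argument.

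According to the decomposition \eqref{decM1}, I write
\[
i\partial_t\tilde f(t,k) = \mathcal{N}_+(t,k) + \mathcal{N}_-(t,k) + \mathcal{N}_{R,1}(t,k) + \mathcal{N}_{R,2}(t,k).
\]
The regular contributions $\mathcal{N}_{R,1}$ and $\mathcal{N}_{R,2}$ can be bounded pointwise in $k$ by adapting the $L^2$-arguments of Section \ref{sec:estimateregular} to $L_k^\infty$. Each such term contains at least one factor of $\mathcal{K}_R$, which by Lemma \ref{lem:Kregular} provides an $\jx^{-\beta}$ spatial weight with $\beta$ as large as our hypotheses on $\gamma$ allow; expressing the term as an $x$-integral, applying a PDO bound from Lemmas \ref{lem:pseeasy}--\ref{lem:PDOmxkk} uniformly in $k$, and estimating the three remaining factors of $u$ using the improved local decay \eqref{eq:interp} and \eqref{regularinfty}, one obtains $|\mathcal{N}_{R,j}(t,k)|\lesssim t^{-1-\rho}\|u\|_{X_T}^3$ uniformly in $k$ for some $\rho=\rho(\gamma)>0$.

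For the singular parts I use the formula \eqref{muSstr1} together with the decomposition \eqref{eq:hatphi+-}. The contributions from the $\hat\zeta(y)/(iy)$ and $\widehat\varpi(y)$ pieces of $\widehat{\varphi_\pm}$ have, by Lemma \ref{lem:inverseFD}, physical-space representations of the form $\int\Psi(x)\,u_1(t,-\e_1 x)\overline{u_2(t,-\e_2 x)}u_3(t,-\e_3 x)\,dx$ with $\Psi$ rapidly decaying; the improved local decay \eqref{eq:interp} bounds these pointwise in $k$ by $O(t^{-1-\rho})$. The leading contribution comes from the delta $\sqrt{\pi/2}\,\delta_0$ in $\widehat{\varphi_\pm}$. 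After using the delta to set $\ell=\e_1(\e_0 k+\e_2 m -\e_3 n)$, each of the sixteen sign configurations produces an oscillatory double integral $\iint e^{is\Phi(k,m,n)}\tilde f(s,\ell)\overline{\tilde f(s,m)}\tilde f(s,n)\,dm\,dn$ with $\Phi=-k^2+\ell^2-m^2+n^2$. Solving $\nabla_{m,n}\Phi=0$ gives the unique critical point $(\ell_c,m_c,n_c)=(\e_0\e_1 k,\e_0\e_2 k,\e_0\e_3 k)$, with Hessian of determinant $-4$; the assumption $|k|\gtrsim t^{-3\alpha}$ keeps this point away from the origin where $\tilde f$ vanishes. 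Applying stationary phase and using the weighted bound \eqref{weightmainconc} as the $C^1$ input on the amplitude yields, for each configuration, a leading term $\frac{C_\e}{s}\tilde f(\e_0\e_1 k)\overline{\tilde f(\e_0\e_2 k)}\tilde f(\e_0\e_3 k)$ plus an $O(s^{-1-\rho})$ remainder. Summing over all configurations from both $\mu_+$ and $\mu_-$, and using the explicit coefficients in \eqref{Ksing+'}--\eqref{Ksing-'} together with the unitarity relations \eqref{TRid}, the non-diagonal configurations cancel or collapse to lower order, leaving only the two diagonal contributions $(\pm,\pm,\pm,\pm)$, which produce $\frac{1}{2s}|\tilde f(s,k)|^2\tilde f(s,k)$.

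Once \eqref{secasODE} is established, differentiating the modified profile \eqref{secasmod} gives
\[
i\partial_t w(t,k) = e^{\frac{i}{2}\int_0^t|\tilde f|^2 ds/(1+s)}\Big(i\partial_t\tilde f - \tfrac{1}{2(1+t)}|\tilde f|^2\tilde f\Big) = O(t^{-1-\rho})
\]
since $\frac{1}{2t}-\frac{1}{2(1+t)}=O(t^{-2})$ and $\tilde f$ is uniformly bounded by \eqref{main1fbounds}. The factor $e^{i(\cdots)/2}$ has modulus $1$, so $|w(t,k)|=|\tilde f(t,k)|$. Integrating the above identity in $t$ yields \eqref{secas10} (with $\rho$ possibly replaced by a slightly smaller exponent to absorb the exceptional low-frequency region $|k|\lesssim t^{-3\alpha}$ handled just before the proposition statement). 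The main obstacle of this plan is the combinatorial verification in the singular-part analysis: showing that, after summing over the non-diagonal sign configurations from both $\mu_+$ and $\mu_-$, the stationary-phase contributions either cancel exactly or are absorbed into the $O(t^{-1-\rho})$ remainder through the structure of the amplitudes $a_\pm^\e$ and the unitarity of the scattering matrix. The remaining steps are routine given the machinery developed in the preceding sections.
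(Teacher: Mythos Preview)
Your treatment of the singular part contains a genuine error. You claim that the $\hat\zeta(y)/(iy)$ piece of $\widehat{\varphi_\pm}$ has, via Lemma \ref{lem:inverseFD}, a physical-space representation with a rapidly decaying weight $\Psi$, and hence is $\mathcal{O}(t^{-1-\rho})$. This is false: $\mathcal{F}^{-1}[\hat\zeta(y)/(iy)]$ is essentially a smoothed sign function (convolution of $\zeta$ with $\mathrm{sgn}$), which is bounded but \emph{not} decaying at infinity. The localization trick you have in mind works in Section \ref{ssecNpm} only because there one is estimating $\mathcal{T}_{y\mathrm{b}}$, and multiplication by $y$ kills the $1/y$ singularity: $y\cdot\hat\zeta(y)/(iy)=-i\hat\zeta(y)$ is Schwartz. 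For $\mathcal{N}_\pm$ itself (no extra factor of $y$), no such gain is available, and the $\mathrm{p.v.}$ contribution is of the same order $1/t$ as the $\delta$ contribution.

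This matters for the final answer: if you compute only the $\delta$ contribution via stationary phase, you do \emph{not} obtain $\tfrac{1}{2t}|\tilde f|^2\tilde f$; you obtain that term plus additional $1/t$-order terms involving $T$, $R_\pm$ and the combinations $\mathcal{N}^\pm[f]$ (see \eqref{Nd+}--\eqref{Nd-} in the paper). These extra terms are cancelled precisely by the leading-order asymptotics of the $\mathrm{p.v.}$ integrals, which the paper extracts via the stationary-phase Lemma \ref{AsLem1} (specifically the formula \eqref{I3} for the $\mathrm{p.v.}$ integral). So the ``combinatorial verification'' you flag as the main obstacle is not merely among the sixteen $\delta$-configurations using unitarity of $S$; it is a cancellation \emph{between} the $\delta$ and $\mathrm{p.v.}$ contributions, and your proposal discards one of the two ingredients. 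For the regular part, the paper takes a shorter route than your direct $L^\infty_k$ argument: it simply uses the $H^1_k$ bounds on $\mathcal{N}_R$ already proved in Section \ref{sec:estimateregular} together with Sobolev embedding.
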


Before getting to the proof of the  proposition above, we explain how to use it to close 
our bootstrap argument, obtain a global solution, and complete the proof of the main theorem.

\smallskip
\subsection*{Bootstrap argument and proof of Theorem \ref{thm:main1}}
Recall the definition of $X_T$ in \eqref{X_T}, 
and assume that for $\eta_1 := \eta^{2/3}$. We make the a priori assumption
\begin{align}\label{apriori0}
	{\| u \|}_{X_T} \leq \eta_1.
\end{align}
Proposition \ref{pro:weightmain} implies, see also \eqref{eq:weiF},
\begin{align}\label{apriori1}
	{\big\| \partial_{k}\tilde{f}(t) \big\|}_{L_{k}^{2}}
	\leq \eta + C |t|^{\alpha} \eta_1^3 \leq 2\eta |t|^{\alpha},
\end{align}
provided $\eta\leq \epsilon_0$ small enough.
Then observe that, by our assumptions, 
$\tilde{f}\left(t,0\right)=0$ for all $t \in [0,T]$ so that, for $\left|k\right|\lesssim\left|t\right|^{-3\alpha}$, 
we have
\begin{align}\label{apriori2}
	|\tilde{f}\left(t,k\right)| \leq \left|\int_{0}^{k} \partial_{\eta}\tilde{f}\left(t,\eta\right)\,d\eta\right|
	\leq \sqrt{\left|k\right|} \,
	{\| \partial_{k}\tilde{f} \|}_{L^{2}} \lesssim |t|^{-\frac{\alpha}{2}} 2\eta . 
\end{align}
In particular, the low-frequency part of $\tilde{f}(t,k)$ goes to zero as $t\rightarrow \infty$,
and we can reduce matters to considering only $|k| \geq |t|^{-3\alpha}$.
Under this latter condition, using \eqref{secasmod} and \eqref{secas10} in Proposition \ref{proasy}
we deduce that
\begin{align}\label{apriori3}
	|\tilde{f}\left(t,k\right)| = |w(t,k)| \leq |w(1,k)| + C \eta \lesssim \eta. 
\end{align}
\eqref{apriori1}-\eqref{apriori3} imply ${\| u \|}_{X_T} \leq \eta_1/2$, improving on \eqref{apriori0},
so that a standard continuation argument gives us a global solution which is bounded in the $X_\infty$ norm. 
Eventually, we obtain the following:

\begin{cor} 
	Let $u = e^{it(\partial_{xx}+V)}f$ be the global-in-time solution obtained above.
	With the same notation of Proposition \ref{proasy}, we have that
	$w(t)$ is a Cauchy sequence in time with values in $L^\infty$.
	Letting $W_{+\infty} := \lim_{t \rightarrow \infty} w(t)$ we obtain the asymptotics \eqref{mainasy}.
\end{cor}

\subsection*{Proof of Proposition \ref{proasy}}
Our starting point is again \eqref{eq:expandf}, which we rewrite in the form
\begin{equation}\label{ODE}
	\partial_{t}\widetilde{f}(t,k)=-i\iiint e^{it\left(-k^{2}+\ell^{2}-m^{2}+n^{2}\right)}
	\widetilde{f}(t,\ell)\overline{\tilde{f}(t,m)}\widetilde{f}(t,n)\,\mu(k,\ell,m,n) \, d\ell dmdn
\end{equation}
where $\mu$ is decomposed as in \eqref{decM1} so that, with the notation \eqref{Nast},
\begin{equation}\label{ODE'}
	\partial_{t}\widetilde{f}(t,k) = \mathcal{N}_{S}\left(t,k\right) + \mathcal{N}_{R}\left(t,k\right).
\end{equation}

\medskip
\subsubsection*{Asymptotics of the regular part}
We begin with the regular part which is straightforward. 
From the estimates in Section \ref{sec:estimateregular}, we have
\[
\left\Vert \mathcal{N}_{R}\left(t,k\right)\right\Vert _{L^{2}} 
+ \left\Vert \partial_{k} \mathcal{N}_{R}\left(t,k\right)\right\Vert _{L^{2}}	
\lesssim {\| u \|}_{X_T}^3 \, t^{-1+3\alpha-\frac{\zeta}{8}},
\]
so that, by Sobolev embedding,
\begin{equation}\label{ODEregular}
	\left| \mathcal{N}_{R}\left(t,k\right)\right|\lesssim 
	{\| u \|}_{X_T}^3 \, t^{-1+3\alpha-\frac{\zeta}{8}}.
\end{equation}
This can be absorbed into the remainder in \eqref{secasODE} provided $\alpha$  is small enough.

\medskip
\subsubsection*{Asymptotics of the singular part}
The first step is the following stationary phase-type lemma: 

\begin{lem}
	\label{AsLem1}
	For $k,t \in \R$, consider the integral expression
	\begin{align}\label{I1}
		\begin{split}
			I[g_1,g_2,g_3](t,k) & = \iiint e^{it \Phi(k,p,m,n)}
			g_1(\epsilon_1(\epsilon_0 k - p + \epsilon_2 m - \epsilon_3 n)) \overline{g_2(m)} 
			g_3(n) \frac{\widehat{\phi}(p)}{p} \, dm\, dn\, dp
			\\
			\Phi(k,p,m,n) & = -k^2 + (\epsilon_0 k - p + \epsilon_2 m - \epsilon_3 n)^2 - m^2 + n^2.
		\end{split}
	\end{align}
	for an even bump function $\phi \in C_0^\infty$ with integral $1$, 
	and with $g:=(g_1,g_2,g_3)$ satisfying
	\begin{align}
		\label{AsLem1as}
		{\| g(t) \|}_{L^\infty} + {\| \langle k \rangle g(t) \|}_{L^2} + \langle t \rangle^{-\alpha} {\| g'(t) \|}_{L^2} \leq 1,
	\end{align}
	for some  $\alpha > 0 $ small enough. 
	Then, for any $t \in \R$,
	\begin{align}\label{I2}
		\begin{split}
			I[g_1,g_2,g_3](t,k) = \frac{\pi}{|t|} e^{-itk^2} \int e^{it(-p+\epsilon_0 k)^2} g_1(\epsilon_1(-p+\epsilon_0 k))
			\overline{g_2(\epsilon_2(-p+\epsilon_0 k))}
			\\ \times  g_3(\epsilon_3 (-p+\epsilon_0 k)) \mathrm{p.v.} \frac{\widehat{\phi}(p)}{p} \,dp
			+ \mathcal{O}(|t|^{-1-\rho})
		\end{split}
	\end{align}
	for some $\rho>0$.
	
	For 
	$|k| \geq |t|^{-3\alpha}$ we can further simplify \eqref{I2} to
	\begin{align}\label{I3}
		\begin{split}
			I[g_1,g_2,g_3](t,k) & =-i\frac{\pi}{|t|}
			\sqrt{\frac{\pi}{2}} \cdot \mathrm{sign}(\e_0k t) \cdot g_1(\epsilon_1\epsilon_0 k) 
			\overline{g_2(\epsilon_2\epsilon_0 k))} g_3(\epsilon_3\epsilon_0 k)+  \mathcal{O}(|t|^{-1-\rho}).
		\end{split}
	\end{align}
\end{lem}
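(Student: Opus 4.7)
The plan is to establish the two claims in sequence. First, I would apply two-dimensional stationary phase in $(m,n)$ for each fixed $p$ to reduce $I$ to a one-dimensional integral in $p$, yielding \eqref{I2}. Then, after changing variables to $u = \epsilon_0 k - p$, I would isolate the leading contribution from the principal-value singularity via a Plemelj--Sokhotski type calculation (equivalently, Parseval combined with the Fresnel Fourier transform of $e^{itu^2}$ and stationary phase in the dual variable), obtaining \eqref{I3}.

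For \eqref{I2}, fixing $k,p$ and setting $q := \epsilon_0 k - p + \epsilon_2 m - \epsilon_3 n$, the equations $\partial_m\Phi = 2\epsilon_2 q - 2m = 0$ and $\partial_n\Phi = -2\epsilon_3 q + 2n = 0$ give the critical point $m_\ast = \epsilon_2(\epsilon_0 k - p)$, $n_\ast = \epsilon_3(\epsilon_0 k - p)$, at which $q = \epsilon_0 k - p$, the $g_1$-argument equals $\epsilon_1(\epsilon_0 k - p)$, and $\Phi_\ast = -k^2 + (\epsilon_0 k - p)^2$. The Hessian
\[
\Phi''(m_\ast,n_\ast) = \begin{pmatrix} 0 & -2\epsilon_2\epsilon_3 \\ -2\epsilon_2\epsilon_3 & 4 \end{pmatrix}
\]
has determinant $-4$, signature $(1,1)$ and $|\det|^{1/2} = 2$; the Morse phase $e^{i\pi\cdot 0/4}=1$ is trivial, and the non-degenerate 2D stationary-phase formula produces the prefactor $(2\pi/|t|)\cdot(1/2) = \pi/|t|$, giving the leading term of \eqref{I2}. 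The stationary-phase remainder is controlled by the standard H\"ormander-type bound, which costs two derivatives of the amplitude in $(m,n)$; these are distributed among the $g_j$ and bounded using \eqref{AsLem1as} together with Sobolev embedding, after which the $p$-integral against $\mathrm{p.v.}(\widehat{\phi}/p)$ converges absolutely and contributes an error $\mathcal{O}(|t|^{-1-\rho})$ provided $\alpha$ is small enough.

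For \eqref{I3}, I would substitute $u = \epsilon_0 k - p$ in \eqref{I2} and write the $p$-integral as
\[
\int e^{itu^2}\,h(u)\,\mathrm{p.v.}\frac{\widehat{\phi}(\epsilon_0 k - u)}{\epsilon_0 k - u}\,du, \qquad h(u) := g_1(\epsilon_1 u)\overline{g_2(\epsilon_2 u)}g_3(\epsilon_3 u).
\]
The assumption $|k| \geq |t|^{-3\alpha}$ ensures the singular support $u = \epsilon_0 k$ is separated from the stationary point $u = 0$ of $e^{itu^2}$. Using $\mathcal{F}[\mathrm{p.v.}(1/p)](\xi) = -i\sqrt{\pi/2}\,\mathrm{sgn}(\xi)$ together with the Fresnel identity $\mathcal{F}_u[e^{itu^2}](\xi) = (2|t|)^{-1/2}e^{i\pi\mathrm{sgn}(t)/4}e^{-i\xi^2/(4t)}$, and performing a 1D stationary-phase analysis in the dual variable $\xi$ at critical point $\xi_\ast = -2\epsilon_0 k\,t$, one obtains
\[
\int e^{itu^2}\,\mathrm{p.v.}\frac{1}{\epsilon_0 k - u}\,du = -i\pi\,\mathrm{sgn}(\epsilon_0 k\cdot t)\,e^{it(\epsilon_0 k)^2} + \mathcal{O}(|t|^{-1/2-\rho}).
\]
The same identity follows by writing $\mathrm{p.v.}\frac{1}{a-u} = \frac{1}{a-u+i0} + i\pi\delta(a-u)$ and deforming the $u$-contour into the quadrants where $e^{itu^2}$ decays. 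Freezing the regular factor $h(u)\widehat{\phi}(\epsilon_0 k - u)$ at $u = \epsilon_0 k$ introduces a remainder of size $\mathcal{O}(|t|^{-\rho})$ via H\"older's inequality, the Lipschitz continuity of $\widehat{\phi}$, and \eqref{AsLem1as} applied to $h$. Using $\widehat{\phi}(0) = (2\pi)^{-1/2}$ and $(\epsilon_0 k)^2 = k^2$, multiplying by the prefactor $\pi|t|^{-1}e^{-itk^2}$ from \eqref{I2} yields the coefficient $-i(\pi/|t|)\sqrt{\pi/2}\,\mathrm{sgn}(\epsilon_0 k\cdot t)$ and gives \eqref{I3}.

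The main obstacle is keeping both stationary-phase remainders uniformly $\mathcal{O}(|t|^{-1-\rho})$ down to the threshold $|k| = |t|^{-3\alpha}$: the separation between the two critical scales $u=0$ and $u=\epsilon_0 k$ is only $|t|^{-3\alpha}$, and this tiny gap must be compatible with the $|t|^{\alpha}$ growth allowed for $\|g'\|_{L^2}$ in \eqref{AsLem1as}. Balancing the non-stationary-phase decay coming from the separation against the residual decay of $\widehat{\phi}$ and $\mathrm{p.v.}(\widehat{\phi}/p)$ dictates the admissible choice of $\rho$ in terms of $\alpha$; the threshold $|k|\geq|t|^{-3\alpha}$ is chosen precisely so that, for $\alpha$ small enough, both the freezing argument in the second step and the Hörmander remainder in the first step remain of size $|t|^{-1-\rho}$ for some $\rho > 0$.
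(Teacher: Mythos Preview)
Your outline is correct, and for \eqref{I2} it matches the paper, which simply defers to \cite[Lemma~5.1]{GPR} for the two-dimensional stationary-phase step; one caveat is that the textbook H\"ormander remainder needs $C^2$ amplitudes, whereas here only $g'\in L^2$ is available, so the actual argument (as in \cite{GPR}) is a hands-on version tailored to $H^1$ regularity rather than a black-box application. For \eqref{I3} your route and the paper's differ. You work on the Fourier/complex side, evaluating the model integral $\int e^{itu^2}\,\mathrm{p.v.}\,(\epsilon_0 k-u)^{-1}\,du$ in closed form via the Fresnel kernel and Plemelj--Sokhotski, and then freezing the regular amplitude at $u=\epsilon_0 k$. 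The paper instead stays on the real side: after the same change of variables it splits into $|q-\epsilon_0 k|\gtrless t^{-1/3}$; in the far region it further splits $|q|\gtrless t^{-2/5}$ and uses integration by parts against $e^{itq^2}$ (or a crude size bound near $q=0$, exploiting $|q-\epsilon_0 k|\approx|k|\geq t^{-3\alpha}$), while in the near region it freezes $g$ at $\epsilon_0 k$ and reduces to the explicit Fourier transform of $\mathrm{p.v.}(1/q)$ against the linear oscillation $e^{2iqt\epsilon_0 k}$. Your argument is cleaner conceptually; the paper's is more elementary and makes the role of the threshold $|k|\geq t^{-3\alpha}$ and of the $H^1$ bound on $g$ explicit at each step. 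Note also that in your freezing step the factor $h$ is only $C^{1/2}$ (from $H^1\hookrightarrow C^{1/2}$), not Lipschitz, so a preliminary localization to $|u-\epsilon_0 k|\lesssim t^{-1/3}$, as in the paper, is what actually makes that error $\mathcal{O}(t^{-\rho})$.
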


\begin{proof}
	For a proof of \eqref{I2} see Lemma 5.1 in \cite{GPR}.
	The further simplification to \eqref{I3} is obtained by using the following:
	for any function $g$ such that
	\begin{align*}
		{\| g \|}_{L^\infty} + \langle t \rangle^{-\alpha} {\| g^\prime \|}_{L^2} \leq 1,
	\end{align*}
	for $\alpha>0$ small enough, and $|K| > t^{-3\alpha}$, one has
	\begin{align}\label{I4}
		I(t,K) = e^{-itK^{2}} \mathrm{p.v.}  \int e^{it q^2} g(q) \frac{\hat{\phi}(q-K)}{q-K} \,dq
		= i \sqrt{\frac{\pi}{2}} \, \mathrm{sign}(tK) \, g(K) \, + \, \mathcal{O}(t^{-\rho}).
	\end{align}
	Changing variables $p \mapsto \e_0k - q$ in \eqref{I2}, taking $K=\e_0k$, using 
	that $\hat{\phi}(0) = 1/\sqrt{2\pi}$, and applying \eqref{I4} gives \eqref{I3}.
	
	To prove \eqref{I4} 
	we first decompose (smoothly) the support of $I(t,K)$ into the regions $|q-K| \geq t^{-1/3}$
	and $|q-K| \leq t^{-1/3}$.
	When $|q-K| \geq t^{-1/3}$ we obtain a remainder term by looking separately at the cases 
	$|q| \geq t^{-2/5}$ (where we can integrate by parts in $q$) and $|q| \leq t^{-2/5}$
	(where the size of the integral is bounded by $|K|^{-1} t^{-2/5}$).
	In the remaining region with $|q-K| \leq t^{-1/3}$ we first approximate $g(q)$ by $g(K)$,
	then change variables and apply integration by parts (note that $t|K|\gg t|q|$), 
	to see that
	\begin{align}
		\begin{split}
			I(t,K) & = g(K) \, \mathrm{p.v.}  \int e^{2i q tK}  e^{it q^2} \frac{\hat{\phi}(q)}{q} \varphi_{\leq 0}(qt^{1/3}) 
			\,dq + \mathcal{O}(t^{-\rho})
			\\ 
			& = g(K)\hat{\phi}(0) \int e^{2i q tK} \frac{1}{q} \, dq + \mathcal{O}(t^{-\rho})
			\\
			& = g(K)\hat{\phi}(0) \, i \pi \, \mathrm{sign}(2tK) + \mathcal{O}(t^{-\rho}) \, . 
		\end{split}
	\end{align}
	For the last identity we used the formula 
	$\widehat{(\mathrm{p.v.} 1/q)}(\xi) = -i\sqrt{\pi/2}\, \mathrm{sign}(\xi)$.
\end{proof}

\medskip
\begin{proof}[Proof of  \eqref{secasODE} and \eqref{secas10}]
	
	
	We recall here for convenience the formulas \eqref{Nast} and \eqref{muSstr1}: 
	$\mathcal{N}_S = \mathcal{N}_+ + \mathcal{N}_-$ with
	\begin{align}\label{NSasy}
		& \mathcal{N}_\iota(t,k) = \frac{1}{(2\pi)^{2}} \iiint e^{it(-k^2+\ell^2-m^2+n^2)} 
		\tilde{f}(t,\ell)\overline{\tilde{f}(t,m)}\tilde{f}(t,n) \mu_\iota(k,\ell,m,n)\,dndmd\ell,
		\\
		\label{muSstr1'}
		& \mu_\iota(k,\ell,m,n) 
		= \sum_{\e_0,\e_1,\e_2,\e_3 \in \{+,-\}}\overline{a_\iota^{\e_0}(k)} \,a_\iota^{\e_1}(\ell) \,
		\overline{a_\iota^{\e_2}(m)} \,a_\iota^{\e_3}(n) \, \sqrt{2\pi} \,
		\widehat{\varphi_\iota}(\e_0k-\e_1\ell+\e_2m-\e_3n),
	\end{align}
	with $\varphi_\iota$ defined in \eqref{eq:hatphi+-}.
	In what follows we disregard $\varpi$ which gives faster decaying terms like $\mathcal{N}_R$ above. 
	
	We change variables $\ell \mapsto p = \e_0k-\e_1\ell+\e_2m-\e_3n$,
	and apply Lemma \ref{AsLem1} to the $\mathrm{p.v.}$ contribution in \eqref{NSasy}, 
	which we denote $\mathcal{N}_\iota^{\mathrm{p.v.}}$, 
	obtaining
	\begin{align}
		\begin{split}
			& \mathcal{N}_\iota^{\mathrm{p.v.}}(t,k) = \frac{1}{(2\pi)^{3/2}} \sum_{\e_0,\e_1,\e_2,\e_3 \in \{+,-\}}
			\iiint e^{it(-k^2+(\e_0k-p+\e_2m-\e_3n)^2-m^2+n^2)} 
			\\ & \times \overline{a_\iota^{\e_0}(k)} 
			\, a_\iota^{\e_1}(\e_1(\e_0k-p+\e_2m-\e_3n)) \,
			\overline{a_\iota^{\e_2}(m)} \,a_\iota^{\e_3}(n) \,
			\\
			& \times
			\tilde{f}(t,\e_1(\e_0k-p+\e_2m-\e_3n)) 
			\overline{\tilde{f}(t,m)}\tilde{f}(t,n) \, 
			\mathrm{p.v.}\, \iota \frac{\hat{\zeta}(p)}{ip} \,dn dm dp
			\\
			& = - \iota \frac{1}{(2\pi)^{3/2}} \frac{\pi}{|t|}\sqrt{\frac{\pi}{2}}
			\sum_{\e_0,\e_1,\e_2,\e_3 \in \{+,-\}} \mathrm{sign}(\e_0k t) 
			\\
			& \times \overline{a_\iota^{\e_0}(k)} 
			\, a_\iota^{\e_1}(\e_1\e_0k) \,
			\overline{a_\iota^{\e_2}(\e_2\e_0k)} \,a_\iota^{\e_3}(\e_3\e_0k) \,
			\tilde{f}(t,\e_1\e_0k) \overline{\tilde{f}(t,\e_2\e_0k)}\tilde{f}(t,\e_3\e_0k) 
			+  \mathcal{O}( {\| u \|}_{X_T}^{3} s^{-1-\rho}).
		\end{split}
	\end{align}
	We set
	\begin{align}
		\label{N+}
		\mathcal{N}^+[f](k) := \big|T(k)\wt{f}(k) + R_+(k)\wt{f}(-k)\big|^2 \big( T(k)\wt{f}(k) + R_+(k)\wt{f}(-k) \big)
	\end{align}and
	\begin{align}
		\label{N-}
		\begin{split}
			\mathcal{N}^-[f](k) & := \big|T(k)\wt{f}(-k) + R_-(k)\wt{f}(k)\big|^2 \big( T(k)\wt{f}(-k) + R_-(k)\wt{f}(k) \big).
		\end{split}
	\end{align}
	Using the formulas for the coefficients in \eqref{Ksing+'}-\eqref{Ksing-'} and \eqref{TRid},
	we calculate, approximately up to $\mathcal{O}( {\| u \|}_{X_T}^{3} t^{-1-\rho})$,
	\begin{align}\label{Npv+}
		\begin{split}
			\mathcal{N}_+^{\mathrm{p.v.}}(t,k)  & \approx 
			\frac{\pi}{|t|} \frac{1}{(2\pi)^{3/2}} \Big[-\sqrt{\frac{\pi}{2}} {T(-k)} \mathcal{N}^+[f](k) \mathbf{1}_+(k)
			\\ & \Big(\sqrt{\frac{\pi}{2}} |\wt{f}(k)|^2 \wt{f}(k)
			- \sqrt{\frac{\pi}{2}}  R_+(k) \mathcal{N}^+[f](-k) \Big) \mathbf{1}_-(k) \Big] 
		\end{split}
	\end{align}
	and
	\begin{align}
		\begin{split}\label{Npv-}
			\mathcal{N}_-^{\mathrm{p.v.}}(t,k)  \approx- \frac{\pi}{|t|} \frac{1}{(2\pi)^{3/2}} \Big[ \Big(-\sqrt{\frac{\pi}{2}} |\wt{f}(k)|^2 \wt{f}(k)
			+ \sqrt{\frac{\pi}{2}} {R_-(-k)} \mathcal{N}^-[f](k) \Big) \mathbf{1}_+(k)
			\\
			+\sqrt{\frac{\pi}{2}}T(k) \mathcal{N}^-[f](-k) \mathbf{1}_-(k) \Big].
		\end{split}
	\end{align}
	
	For the $\delta$ part, the analysis is standard. 
	For $\iota\in\{+,-\}$ and
	\begin{align*}
		\begin{split}
			& \mathcal{N}_\iota^{\delta}(t,k) := \frac{1}{(2\pi)^{3/2}} \sum_{\e_0,\e_1,\e_2,\e_3 \in \{+,-\}}
			\iiint e^{it(-k^2+(\e_0k-p+\e_2m-\e_3n)^2-m^2+n^2)} 
			\\ & \times \overline{a_\iota^{\e_0}(k)} 
			\, a_\iota^{\e_1}(\e_1(\e_0k-p+\e_2m-\e_3n)) \,
			\overline{a_\iota^{\e_2}(m)} \, a_\iota^{\e_3}(n) \, 
			\\
			& \times
			\tilde{f}(t,\e_1(\e_0k-p+\e_2m-\e_3n)) \overline{\tilde{f}(t,m)}\tilde{f}(t,n) \,\sqrt{\frac{\pi}{2}\delta_{0}(p)}\,dn dm dp,
		\end{split}
	\end{align*} 
	we have, up to  $\mathcal{O}( {\| u \|}_{X_T}^{3}  t^{-1-\rho})$
	\begin{align}
		\label{Nd+}
		\begin{split}
			\mathcal{N}_+^{\delta} & \approx \frac{1}{(2\pi)^{3/2}}\frac{\pi}{|t|} \sqrt{\frac{\pi}{2}}
			\Big[ T(-k) \mathcal{N}^+[f](k) \mathbf{1}_+(k)
			+ \Big(|\wt{f}(k)|^2 \wt{f}(k) +  R_+(k) \mathcal{N}^+[f](-k) \Big) \mathbf{1}_-(k) \Big]
		\end{split},
	\end{align}
	and
	\begin{align}
		\label{Nd-}
		\begin{split}
			\mathcal{N}_-^{\delta} & \approx \frac{1}{(2\pi)^{3/2}}\frac{\pi}{|t|} \sqrt{\frac{\pi}{2}}
			\Big[\Big(|\wt{f}(k)|^2 \wt{f}(k) + {R_-(-k)} \mathcal{N}^-[f](k) \Big) \mathbf{1}_+(k)
			+ {T(k)} \mathcal{N}^-[f](-k) \mathbf{1}_-(k) \Big].
		\end{split}
	\end{align}
	
	Putting together \eqref{Npv+}-\eqref{Npv-} and \eqref{Nd+}-\eqref{Nd-}, we obtain
	\begin{align*}
		\begin{split}
			\mathcal{N}_S  & = \mathcal{N}_+ + \mathcal{N}_-
			\\& =\mathcal{N}_+^{\delta}(t,k)+\mathcal{N}_+^{\mathrm{p.v.}}(t,k)
			+\mathcal{N}_-^{\delta}(t,k)+\mathcal{N}_-^{\mathrm{p.v.}}(t,k)
			+  \mathcal{O}( {\| u \|}_{X_T}^{3} t^{-1-\rho})
			\\& =\frac{1}{(2\pi)^{3/2}}\frac{\pi}{|t|} \sqrt{\frac{\pi}{2}}\Big(2|\wt{f}(k)|^2 \wt{f}(k)\Big)
			+  \mathcal{O}( {\| u \|}_{X_T}^{3} t^{-1-\rho})
			\\
			& = \frac{1}{2t}|\wt{f}(k)|^2 \wt{f}(k) +  \mathcal{O}( {\| u \|}_{X_T}^{3} t^{-1-\rho}).
		\end{split}
	\end{align*}
	From \eqref{ODE'} and \eqref{ODEregular} we obtain \eqref{secasODE}.
	
	The estimate \eqref{secas10} can be obtained from \eqref{secasODE}, 
	using that $\partial_t w(t,k) =  \mathcal{O}({\| u \|}_{X_T}^{3} t^{-1-\rho})$ for $|k|\gtrsim t^{-3\alpha}$,
	and the inequality \eqref{apriori2} for $|w(t_j,k)|$, $j=1,2$, when $|k|\lesssim t_j^{-3\alpha}$.
\end{proof}

\medskip
\appendix
\section{Improved Localized decay for generic potentials}\label{sec:localdecay}

The following lemma is an improvement of Lemma \ref{lemlocdecinftyng} in the case of generic potentials.

\begin{lem}[Improved $L^\infty$ local decay]\label{lemlocdecinfty}
	Suppose $\jx^\gamma V(x)\in L^1$ with $\gamma \geq 3$ and $V$ is generic. 
	Then, for the perturbed flow one has
	\begin{align}\label{locdecinfty0}
		\left\Vert \jx^{-2} e^{iHt} h \right\Vert_{L^{\infty}_x}
		\lesssim |t|^{-1}{\big\| \tilde{h} \big\|}_{H^1_k}.
	\end{align}
\end{lem}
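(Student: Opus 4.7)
The plan is to write $e^{iHt}h = \frac{1}{\sqrt{2\pi}}\int \K(x,k) e^{ik^2 t} \tilde{h}(k)\,dk$, focus on the contribution from $k\geq 0$ (the negative frequency case is analogous), and use the decomposition \eqref{matKk>0} to split this into a $\chi_+$ piece
\[
A_+(x) := \chi_+(x)\int_{k\geq0} T(k)\, m_+(x,k)\, e^{ixk+ik^2 t}\,\tilde{h}(k)\,dk
\]
and a $\chi_-$ piece $A_-(x)$ involving $m_-(x,\pm k)$ and $R_-(k)$. The decay rate $|t|^{-1}$ (instead of $|t|^{-3/4}$) will come from a single integration by parts $e^{ik^2 t} = (2itk)^{-1}\partial_k e^{ik^2 t}$, which produces a factor of $1/k$ that must be compensated by vanishing at zero frequency. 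This is exactly where the generic hypothesis enters through Lemma \ref{lem:estiTRTaylor}.

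For $A_+$, I would integrate by parts in $k$. The boundary term at $k=0$ vanishes because $T(0)=0$, and the bulk term produces a factor $T(k)/k$ which, thanks to $T(k) = \alpha k + o(k)$ and Lemma \ref{estiTR}, satisfies $|T(k)/k| + |\partial_k(T(k)/k)| \lesssim \jk^{-1}$. Distributing the $\partial_k$ across the three factors $e^{ixk}$, $m_+(x,k)$, and $T(k)k^{-1}\tilde{h}(k)$ yields three integrals. The term where $\partial_k$ hits the exponential produces an $ix$ which is absorbed by the weight $\jx^{-1}$; the term where $\partial_k$ hits $m_+$ is controlled using the estimate $\chi_+(x)\jx^{-1}|\partial_k m_+(x,k)| \lesssim 1$ from Lemma \ref{lem:Mestimates}; and the last term is bounded by $\|\partial_k(T(k)k^{-1}\tilde h)\|_{L^1} \lesssim \|\tilde h\|_{H^1}$ after Cauchy--Schwarz (using $T(k)k^{-1} \in L^2$ near zero).

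The term $A_-$ requires more care because $m_-(x,\pm k)$ does not vanish at $k=0$ and $R_-(0) = -1$ (not $0$). I would split using a cutoff $\varphi_{>1}(k) + \varphi_{\leq 1}(k) = 1$. On $|k|\gtrsim 1$ integration by parts produces no singularity and proceeds as for $A_+$. For $|k|\lesssim 1$, I would decompose
\[
m_-(x,-k)e^{ikx} + R_-(k)m_-(x,k)e^{-ikx} = \big(R_-(k)+1\big)e^{-ikx}m_-(x,k) + \big(m_-(x,-k)-m_-(x,k)\big)e^{ikx} + m_-(x,k)\big(e^{ikx}-e^{-ikx}\big).
\]
In the first piece, $R_-(k)+1 = \alpha_- k + o(k)$ behaves like $T(k)$ and the analysis mimics $A_+$. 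In the second, $m_-(x,-k)-m_-(x,k) = k\int_{-1}^1 \partial_k m_-(x,zk)\,dz$, so the $k$ factor cancels the $1/k$ from integration by parts and Lemma \ref{lem:Mestimates} provides uniform control (with a weight $\jx$). In the third, $e^{ikx}-e^{-ikx}$ supplies a factor $k\cdot \jx$, again canceling the $1/k$; this is precisely where the $\jx^{-2}$ weight on the left-hand side is needed, since this step produces a factor $\jx^2$ on the right.

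The main obstacle, and the reason this improved estimate costs an extra weight compared to Lemma \ref{lemlocdecinftyng}, is controlling the $A_-$ integral near $k=0$ simultaneously with the integration by parts: one must convert the obstruction ``$m_-$ does not vanish at $k=0$'' and ``$R_-(0)\neq 0$'' into the vanishing of $R_-(k)+1$ (requiring genericity) and into symmetrized differences, at the cost of a factor $\jx$ from the $e^{ikx}-e^{-ikx}$ piece. Provided $\jx^\gamma V\in L^1$ with $\gamma\geq 3$, all the PDO-type bounds on $\jx \, m_-(x,k)$, $\jx \, \partial_k m_-(x,k)$ and $\jx^2 \, \partial_k m_-$ that arise in the estimates are controlled by Lemmas \ref{lem:Mestimates}, \ref{lem:pseeasy}, and \ref{lem:m_k}, which closes the proof.
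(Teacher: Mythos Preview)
Your proposal is correct and follows essentially the same approach as the paper: the split into $A_+$ and $A_-$, the integration by parts via $e^{itk^2}=(2itk)^{-1}\partial_k e^{itk^2}$ with the $T(k)/k$ bound for $A_+$, and the low-frequency decomposition of $A_-$ into the $(R_-(k)+1)$, $(m_-(x,-k)-m_-(x,k))$, and $(e^{ikx}-e^{-ikx})$ pieces all match exactly. One small remark: the final bounds here are pointwise $L^\infty_x$ estimates, so only Lemma~\ref{lem:Mestimates} (and Lemmas~\ref{estiTR}, \ref{lem:estiTRTaylor}) are needed, not the $L^2$ PDO bounds of Lemmas~\ref{lem:pseeasy} and~\ref{lem:m_k}.
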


\begin{rem}
	The estimate \eqref{locdecinfty0} is consistent with the estimate of Krieger-Schlag \cite{KS} 
	and Schlag \cite{Sch} who proved (in more general scenarios as well)
	\begin{align}\label{KS}
		\left\Vert \jx^{-1}  e^{iHt} h  \right\Vert_{L^{\infty}_x}
		\lesssim |t|^{-3/2}{\big\| \jx h \big\|}_{L^1_x}.
	\end{align}
	In particular \eqref{KS} implies \eqref{locdecinfty0} by noticing that 
	the boundedness of wave operators for generic potentials, gives
	\[{\big\| \tilde{h} \big\|}_{H^1_k} \approx {\big\| \jx h \big\|}_{L^2_x}\]
	and then looking separately at the regions $|x| \leq |t|$ and $|x|>|t|$.
	The use of $L^2$ based norms, instead of $L^1$ as in the right-hand side of \eqref{KS}, 
	will be useful when considering the stability of solitons for cubic NLS
	(and similar models). 
	For the nonlinear analysis of this paper (Section \ref{sec:Cubic})
	the rate of decay $|t|^{-\frac{3}{4}}$ of Lemma \ref{lemlocdecinftyng} is enough,
	and we do not need to use Lemma \ref{lemlocdecinfty}.
	Nevertheless, since this is of independent interest, we give its proof below 
	(actually we prove the more general bounds \eqref{locdecinfty} and \eqref{locdecinftyR})
	and thus provide also an alternative approach to the proof of \eqref{KS}.
\end{rem}

\begin{proof}[Proof of Lemma \ref{lemlocdecinfty}]
	We follow the same spirit of the proof of Lemma \ref{lemlocdecinftyng}, and show the stronger estimates
	\begin{align}\label{locdecinfty}
		\left\Vert \jx^{-2} \big( e^{iHt} h \big)_A \right\Vert_{L^{\infty}_x}
		= \left\Vert \jx^{-2} \int\mathcal{K}_A(x,k)e^{ik^{2}t}\tilde{h}(k)\,dk\right\Vert_{L^{\infty}_x}
		\lesssim |t|^{-1}{\big\| \tilde{h} \big\|}_{H^1_k}, \qquad A \in \{0,S\},
	\end{align}
	and, for $\gamma-3\geq\beta$,
	\begin{align}\label{locdecinftyR}
		\left\Vert \jx^\beta \big( e^{iHt} h \big)_R \right\Vert_{L^{\infty}_x}
		= \left\Vert \jx^\beta \int\mathcal{K}_R(x,k)e^{ik^{2}t}\tilde{h}(k)\,dk\right\Vert_{L^{\infty}_x}
		\lesssim |t|^{-1}{\big\| \tilde{h} \big\|}_{H^1_k}. 
	\end{align}
	
	{\it Proof of \eqref{locdecinfty0}}.
	We start with the case $A=0$ and write down explicitly
	\[
	e^{it\left(-\partial_{xx}+V\right)}h
	=\frac{1}{\sqrt{2\pi}}\int\mathcal{K}(x,k)e^{ik^{2}t}\tilde{h}(k)\,dk.
	\]
	We can focus on the contribution to the integral with $k\geq0$, since the analysis for $k\leq 0$ is identical.
	We write, see \eqref{matKk>0}, 
	\begin{align}\label{locdecpr11}
		\begin{split}
			& \int_{k\geq0}\mathcal{K}(x,k)e^{ik^{2}t}\tilde{h}(k)\,dk = A_+ + A_-,
			\\
			& A_+ := \chi_+(x) \int_{k\geq0} T(k) m_+(x,k) e^{ixk + ik^{2}t}\tilde{h}(k)\,dk,
			\\
			& A_- := \chi_-(x) \int_{k\geq0} \big(m_-(x,-k)e^{ikx} + R_-(k)m_-(x,k)e^{-ikx}\big) 
			e^{ik^{2}t}\tilde{h}(k)\,dk,
		\end{split}
	\end{align}
	
	For the first term in \eqref{locdecpr21} we integrate by parts in $k$ 
	and write
	\begin{align*}
		2it \, A_+ 
		& = \chi_+(x) \int_{k\geq0} \frac{T(k)}{k} e^{ixk} m_+(x,k) \tilde{h}(k)\,de^{ik^{2}t} = A_1 + A_2 + A_3,
		\\
		A_1 & := -\chi_+(x) \int_{k\geq0} e^{ixk} m_+(x,k) e^{ik^{2}t}
		\partial_{k}\big(T(k)k^{-1} \tilde{h} \big)\,dk,
		\\
		A_2 & := -\chi_+(x)\int_{k\geq0} T(k)k^{-1} e^{ixk}\partial_{k}m_+(x,k) e^{ik^{2}t}\tilde{h}(k)\,dk,
		\\
		A_3 & := - \chi_+(x)ix \int_{k\geq0} T(k)k^{-1} e^{ixk} m_+(x,k) e^{ik^{2}t}\tilde{h}(k)\,dk.
	\end{align*}
	
	For the first term, using $\chi_+(x) | m_+(x,k) | \lesssim 1$, see \eqref{Mestimates1}, we have
	\begin{align*}
		\big|A_1(x)\big| 
		\lesssim \int_{k\geq0} \big| \partial_{k}\big(T(k)k^{-1} \tilde{h}\big) \big| \,dk 
	\end{align*}
	From \eqref{estiTR} and Lemma \ref{lem:estiTRTaylor} we have
	\begin{align}\label{T/k}
		\big|T(k)k^{-1}\big| + \big|\partial_{k}\big(T(k)k^{-1}\big)\big| \lesssim {\langle k \rangle}^{-1}.
	\end{align}
	In particular, the expressions above are in $L^2$ and applying Cauchy-Schwarz we obtain
	\begin{align*}
		\big|A_1(x)\big| \lesssim {\| \tilde{h} \|}_{H^1}
	\end{align*}
	which is consistent with the desired bound.
	$A_2$ and $A_3$ can be estimated similarly: 
	using $\chi_+(x) \jx^{-1}| \partial_k m_+(x,k) | \lesssim 1$, see \eqref{Mestimates2}, and \eqref{T/k} we have
	\begin{align*}
		\jx^{-1} \Big( \big|A_2(x)\big| + \big|A_3(x)\big| \Big) \lesssim {\| \tilde{h} \|}_{L^2}.
	\end{align*}
	
	Next, we look at $A_-$ in \eqref{locdecpr11}. We want to apply similar argument to those used for $A_+$ above,
	but we need to pay some more attention to the vanishing of the integrand at $k=0$
	since, in the proof of the bound for $A_1$ we used the vanishing of $T(k)$ at $k=0$.
	Using a smooth cutoff function to distinguish $|k|\gtrsim 1$ and $|k| \lesssim 1$ we write
	\begin{align}\label{locdecpr13}
		\begin{split}
			& A_- = A_4 + A_5 + A_6 + A_7,
			\\
			& A_4 = \chi_-(x) \int \varphi_{>1}(k) \big(m_-(x,-k)e^{ikx} + R_-(k)m_-(x,k)e^{-ikx}\big) 
			e^{ik^{2}t} \tilde{h}(k)\,dk,
			\\
			& A_5 = \chi_-(x) \int \varphi_{\leq 1}(k) \big(R_-(k) + 1\big) e^{-ikx}m_-(x,k)
			e^{ik^{2}t} \tilde{h}(k)\,dk,
			\\
			& A_6 = \chi_-(x) \int \varphi_{\leq 1}(k) \big(m_-(x,-k)-m_-(x,k)\big)e^{ikx}  
			e^{ik^{2}t} \tilde{h}(k)\,dk,
			\\
			& A_7 = \chi_-(x) \int \varphi_{\leq 1}(k) \big(e^{ikx}-e^{-ikx}\big) m_-(x,k) 
			e^{ik^{2}t} \tilde{h}(k)\,dk.
		\end{split}
	\end{align}
	
	On the support of $A_4$ we have $|k|\gtrsim 1$, so we can integrate by parts in $k$ 
	using $e^{itk^2} = (2it k)^{-1} \partial_k e^{itk^2}$ as before, and gain a factor of $|t|^{-1}$ 
	without introducing any singularity in $k$. We can then proceed exactly as in the estimate of $A_+$.
	
	On the support of $A_5$ we have $|k|\lesssim 1$ and, see Lemma \ref{lem:estiTRTaylor}, 
	$R_-(k) + 1 = \mathcal{O}(k)$.
	In particular we see from Lemma \ref{lem:estiTRTaylor} that $R_-(k) + 1$ behaves exactly like $T(k)$,
	so this term is essentially the same as $A_+$ and can be treated identically.
	
	For $A_6$ we first notice that 
	\begin{align*}
		m_-(x,-k)- m_-(x,k) = -\int_{-1}^1 (\partial_k m_-)(x,zk) \, dz \cdot k
		=: a(x,k) \cdot k.
	\end{align*}
	We can then use the $k$ factor in the above right-hand side to integrate by parts;
	and since $|\chi_-(x)\partial_k^p a(x,k)| \lesssim 1$, for $p=0,1$, see \eqref{Mestimates2},
	we get the bound 
	\begin{align*}
		\big| \jx^{-1}A_6(x) \big| \lesssim {\| \tilde{h} \|}_{H^1}.
	\end{align*}
	
	The last term $A_7$ is similar to $A_6$ by observing that, on the support of $\chi_-$,
	\begin{align*}
		\big| k^{-1} (e^{ikx}-e^{-ikx}) m_-(x,k) \big| 
		+ \Big| \partial_k\big( k^{-1} (e^{ikx}-e^{-ikx}) m_-(x,k) \big) \Big|  \lesssim \jx^2.
	\end{align*}

	\medskip
	\noindent
	{\it Proof of \eqref{locdecinftyR}}.
	The estimate involving $\mathcal{K}_R$ 
	follows from the same arguments used in the case of $\mathcal{K}$, 
	by essentially replacing 
	$m_\pm(x,\pm k)$ with $m_\pm(x,\pm k)-1$, 
	and using the estimates \eqref{Mestimates2} from Lemma \ref{lem:Mestimates}. 
	We just notice that if $\gamma-3\geq \beta$ then $|\jx^\beta\jx^2(m_\pm(x,\pm k)-1)|<\infty$.
\end{proof}

\bigskip

\bigskip

\end{document}